\title{A classification of equivariant gerbe connections}
\author{Byungdo Park}
\address{Hausdorff Research Institute for Mathematics (HIM), Poppelsdorfer Allee 45, 53115 Bonn, Germany}
\curraddr{School of Mathematics, Korea Institute for Advanced Study,
85 Hoegiro, Dongdaemun-gu, Seoul, 02455, Republic of Korea}
\email{byungdpark@gmail.com}
\author{Corbett Redden}
\address{Department of Mathematics, LIU Post, Long Island University, 720 Northern Blvd, Brookville, NY 11548, USA}
\email{corbett.redden@liu.edu}
\numberwithin{equation}{section}
\theoremstyle{plain}
\newtheorem{thm}[equation]{Theorem}
\newtheorem{prop}[equation]{Proposition}
\newtheorem{lemma}[equation]{Lemma}
\newtheorem{cor}[equation]{Corollary}
\theoremstyle{definition}
\newtheorem{defn}[equation]{Definition}
\newtheorem{example}[equation]{Example}
\newtheorem{rem}[equation]{Remark}
\renewenvironment{itemize}{
  \begin{list}{$\bullet$}{
    \setlength{\leftmargin}{1em}  
} }{
  \end{list}
}
\DeclareFontFamily{OT1}{pzc}{}
\DeclareFontShape{OT1}{pzc}{m}{it}%
              {<-> s * [1.15] pzcmi7t}{}
\DeclareMathAlphabet{\mathpzc}{OT1}{pzc}{m}{it}
\newcommand{\Z}{\mathbb{Z}}
\newcommand{\R}{\mathbb{R}}
\newcommand{\fg}{\mathfrak{g}}
\newcommand{\iso}{\cong}
\renewcommand{\=}{:=}
\newcommand{\pt}{\text{pt}}
\newcommand{\into}{\hookrightarrow}
\newcommand{\cK}{\mathcal{K}}
\newcommand{\cKh}{{\widehat{\mathcal{K}}}}
\newcommand{\wt}{\widetilde}
\newcommand{\fk}{\mathfrak{k}}
\newcommand{\Id}{\operatorname{id}}
\newcommand{\cA}{\mathcal{A}}
\newcommand{\cC}{\mathcal{C}}
\newcommand{\cJ}{\mathcal{J}}
\newcommand{\Aut}{\operatorname{Aut}}
\newcommand\bas{\text{bas}}
\newcommand\Ad{\operatorname{Ad}}
\newcommand\op{\mathrm{op}}
\newcommand\st{\>\> \big| \>\>}
\newcommand\fl{\operatorname{flat}}
\newcommand\Hol{\operatorname{Hol}}
\newcommand\Fun{\operatorname{Fun}}
\newcommand\cl{{\operatorname{cl}}}
\renewcommand{\o}{\circ}
\renewcommand\O{\Omega}
\newcommand{\cF}{\mathcal{F}}
\newcommand\Man{\cat[Man]}
\newcommand\Set{\cat[Set]}
\newcommand\sSet{\cat[sSet]}
\newcommand\Pre{\cat[Pre]}
\newcommand\Shv{\cat[Shv]}
\newcommand\Gpd{{\cat[Gpd]}}
\newcommand\Ch{{\cat[Ch]}_{\geq 0}}
\newcommand\Top{\cat[Top]}
\newcommand\Ab{\cat[Ab]}
\newcommand\Gpdinfty{\infty\text{-}\cat[Gpd]}
\newcommand\DK{\Gamma} 
\newcommand{\B}{\mathpzc{B}}
\newcommand{\E}{\mathpzc{E}}
\newcommand{\Bn}{\mathpzc{B_{\scriptscriptstyle{\nabla}}\mkern-2mu}}
\newcommand{\En}{\mathpzc{E_{\scriptscriptstyle{\nabla}}\mkern-2mu}}
\newcommand{\BBn}{\mathpzc{B}^2_{\scriptscriptstyle{\nabla}}}
\newcommand\GMan{\text{-}\Man}
\newcommand\cM{\mathpzc{M}}
\newcommand\Bun[1][S^1]{\cat[Bun]_{#1}}
\newcommand\Bunc[1][S^1]{\cat[Bun]_{#1,\nabla}}
\newcommand\GBun[1][S^1]{\text{-}\cat[Bun]_{#1}}
\newcommand\GBunc[1][S^1]{\text{-}\cat[Bun]_{#1,\nabla}}
\newcommand\Grb{\cat[Grb]}
\newcommand\Grbc{\cat[Grb]_{\nabla}}
\newcommand\GGrb{\text{-}\cat[Grb]}
\newcommand\GGrbc{\text{-}\cat[Grb]_{\nabla}}
\newcommand\FP{\operatorname{FP}}
\newcommand\desc{\operatorname{desc}}
\newcommand\GGrbtrivc{\text{-}\cat[Grbtriv]_{\nabla}}
\newcommand\cI{\mathcal{I}}
\newcommand\cIh{\widehat{\mathcal{I}}}
\newcommand\cL{\mathcal{L}}
\newcommand\cLh{\widehat{\mathcal{L}}}
\newcommand\cJh{\widehat{\mathcal{J}}}
\newcommand\cEll{\mathscr{L}}
\newcommand\cEllh{\widehat{\mathscr{L}}}
\newcommand\HG[1][G]{\widehat{H}_{#1}}
\renewcommand{\sslash}{/\!\!/}
\newcommand{\nslash}{ /\!\!/\mkern-4mu_{\scriptscriptstyle{\nabla}}}
\newcommand\DD{\operatorname{DD}}
\newcommand\curv{\operatorname{curv}}
\renewcommand\L{\mathbb L} 
\newcommand\K{\mathpzc{K}} 
\newcommand\Kh{\widehat{\mathpzc{K}}} 
\newcommand{\Tr}{\operatorname{Tr}}
\DeclareMathOperator*{\hocolim}{hocolim}
\DeclareMathOperator*{\holim}{holim}
\newcommand{\al}{\alpha}
\newcommand{\be}{\beta}
\newcommand{\ga}{\gamma}     
\newcommand{\de}{\delta}
\newcommand{\te}{\theta}    \newcommand{\Te}{{\Theta}}
\newcommand{\si}{\sigma}     
\newcommand{\om}{\omega}     \newcommand{\Om}{{\Omega}}
\newcommand{\vph}{\varphi}
\newcommand{\tsr}{\otimes}
\newcommand{\srl}{\stackrel}
 \newcommand{\ra}{\rightarrow}  \newcommand{\embed}{\hookrightarrow}
\newcommand{\wtl}{\widetilde}
\newcommand{\na}{\nabla}
\newcommand{\isom}{\cong}   
\newcommand{\bmat}{\left(\begin{array}}  \newcommand{\emat}{\end{array}\right)}
\newcommand{\barr}{\begin{array}}  \newcommand{\earr}{\end{array}}
\newcommand{\bcd}{\begin{CD}}  \newcommand{\ecd}{\end{CD}}
\newcommand{\beq}{\begin{equation}\begin{aligned}}  \newcommand{\eeq}{\end{aligned}\end{equation}}
\newcommand{\beqs}{\begin{equation*}\begin{aligned}}  \newcommand{\eeqs}{\end{aligned}\end{equation*}}
\begin{document}


\begin{abstract}Let $G$ be a compact Lie group acting on a smooth manifold $M$.  In this paper, we consider Meinrenken's $G$-equivariant bundle gerbe connections on $M$ as objects in a 2-groupoid. We prove this 2-category is equivalent to the 2-groupoid of gerbe connections on the differential quotient stack associated to $M$, and isomorphism classes of $G$-equivariant gerbe connections are classified by degree three differential equivariant cohomology.  Finally, we consider the existence and uniqueness of conjugation-equivariant gerbe connections on compact semisimple Lie groups.
\end{abstract}

\maketitle
\thispagestyle{empty} 

\tableofcontents


\section{Introduction}\label{Sec:Intro}

While there exist several models for $S^1$-banded gerbes with connection, working directly with any of them necessarily involves technical details and subtleties, and the complexity grows rapidly should one consider $p$-gerbes for $p>1$.  However, there is a relatively simple classification of gerbe connections {\it up to isomorphism}, which is given by degree 3 differential cohomology (Deligne cohomology) and fits into the pattern
\begin{equation}\label{Eq:LowDegreeDC}  \HG[]^1(M) \iso C^\infty(M,S^1),\quad 
\HG[]^2(M) \iso \Bunc(M)_{/_{\iso}}, \quad  \HG[]^3(M) \iso \Grbc(M)_{/_{\iso}} \, .  \end{equation}
Because the groups $\HG[]^n(M)$ fit into short exact sequences involving differential forms and ordinary cohomology, one can use elementary techniques to get a strong foothold in the gerbe world.

Assume now that $G$ is a compact Lie group acting smoothly on a finite-dimensional manifold $M$.  This paper's aim is to further develop the theory of $G$-equivariant gerbe connections in a manner  analogous to $G\GBunc(M)$, the groupoid of $G$-equivariant $S^1$-bundles on $M$ with $G$-invariant connection.  We do so by considering two specific models. The first is denoted $G\GGrbc(M)$, the 2-groupoid of $G$-equivariant bundle gerbes on $M$ with $G$-equivariant connection.  Its objects, which were defined by Meinrenken \cite{Mein-Grb} and further investigated by Sti\'enon \cite{Sti10} and Tu--Xu \cite{TuXu15}, are differential-geometric in the same way as Murray's bundle gerbes with connection \cite{Mur96}.  We define a 2-categorical structure on $G\GGrbc(M)$ by adapting Waldorf's 2-groupoid structure on $\Grbc(M)$ in a straightforward way \cite{Waldorf-More}.

The second model uses the language of higher stacks, or sheaves of $\infty$-groupoids on the site of manifolds.  For $M\in G\GMan$ we consider the differential quotient stack ${\En G \times_G M}$, which associates to every test manifold $X$ the groupoid of principal $G$-bundles {\it with connection} $(P,\Theta) \in \Bunc[G](X)$, together with $G$-equivariant map $f\colon P\to M$.  In \cite{Redden16a}, the differential equivariant cohomology groups were defined as
\[ \HG^n(M) \= \HG[]^n(\En G \times_G M),\]
suitably interpreted, and it was shown that they fit into short exact sequences \eqref{ses1}--\eqref{ses3}  involving the Borel equivariant cohomology $H_G^*(M;-) \= H^*(EG\times_G M;-)$ and the equivariant differential forms $\O^*_G(M)$ of Cartan--Weil.  Furthermore, there are isomorphisms generalizing \eqref{Eq:LowDegreeDC} in degrees one and two:
\[  \HG^1(M) \iso C^\infty(M;S^1)^G, \quad  \HG^2(M) \iso G\GBunc(M)_{/_{\iso}}.\]
It was further shown in \cite{Redden16b} that there is a natural equivalence of groupoids 
\[ \begin{tikzcd} G\GBunc[K](M) \ar[r, "\iso", shift left] & \Bunc[K](\En G \times_G M)  \ar[l, shift left] \end{tikzcd} \]
for any Lie groups $G, K$.  When combined with the properties of $\HG^3(M)$, this strongly suggests that  $\Grbc(\En G\times_G M)$ should be a valid model for $G$-equivariant gerbe connections.

That hope is confirmed by the two primary results of this paper, Theorems \ref{Thm:IsoClasses} and \ref{Thm:Equiv2Gpd}.  The first theorem states
\begin{equation} G\GGrbc(M)_{/_{\iso}} \overset{\iso}\longrightarrow \HG^3(M), \end{equation}
that isomorphism classes of $G$-equivariant bundle gerbe connections are classified by degree 3 differential equivariant cohomology.   The second theorem proves that a natural functor
\begin{equation}\label{Eq:FunctorIntro} G\GGrbc(M) \overset{\simeq}\longrightarrow \Grbc(\En G \times_G M) \end{equation}
is an equivalence of 2-groupoids.  As explained in works such as \cite{NikSch-Equiv}, a natural approach to equivariance in higher geometry is to use $\E G \times_G M$ and the language of sheaves.  The essential point of this paper is that when connections are involved, one should consider $\En G \times_G M$ instead of $\E G \times_G M$.

As a sample application of Theorem \ref{Thm:IsoClasses}, we consider the case where $G$ is a compact semisimple Lie group acting on itself by conjugation.  Theorem \ref{Thm:AdjointGerbe} gives a simple proof of the existence and uniqueness of certain $G$-equivariant gerbe connections on $G$.  Such gerbes have been considered in several other works, including \cite{BXZ-EquivGrbs, Brylinski-Grb00, MR1945806, Mein-Grb, MR3539489}.

The paper is structured in the following way.  Section \ref{Sec:Conventions} contains a brief exposition of existing knowledge that will be utilized in the paper.  The inclusion of this section serves the twofold purpose of establishing notations and conventions, and making the paper as self-contained as possible.  Section \ref{Sec:Defn} contains an explicit definition of the 2-groupoid $G\GGrbc(M)$ and proves many basic properties, most of which follow easily from existing work in \cite{Waldorf-More, Mein-Grb, Sti10}.  Section \ref{Sec:Stack} introduces the second model of $\Grbc(\En G\times_G M)$, and we use the Weil homomorphism to define the functor \eqref{Eq:FunctorIntro}.  Making use of a result by Tu--Xu \cite{TuXu15}, we are able to prove that \eqref{Eq:FunctorIntro} is a bijection on isomorphism classes, and we use the main result from \cite{Redden16b} to prove that it induces equivalences of automorphism groupoids.  Finally, Section \ref{Sec:Examples} contains a brief discussion of equivariant holonomy, and it proves a unique existence result for $\Ad$-equivariant gerbes on compact semisimple Lie groups. We also explain a natural model for equivariant connections on $p$-gerbes for $p>1$.

Finally, it is worth noting that gerbes, both with and without connection, have played an important role in mathematical and physical problems, including twisted/differential $K$-theory and Ramond-Ramond field classifications \cite{MR1911247, MR3195150, Park16a, 1512.07185}, Wess-Zumino-Witten models \cite{MR2318847}, string structures \cite{WaldorfStringCS}, and even topological insulators \cite{1512.01028, MR3681385}.  In each of these situations, it is natural to seek equivariant generalizations.  The tools developed in this paper should aid especially in answering existence/uniqueness questions and in better developing functorial constructions.  In particular, the formal properties of $\Grbc(\En G \times_G M)$, displayed in \eqref{Eq:Functoriality}, make equivariant gerbe connections excellent candidates for WZW terms in gauged sigma models  \cite{MR1151251, MR1311654, MR2770022,MR3079005, 1410.5500}.

\textbf{Acknowledgements.} B.P. thanks Hausdorff Research Institute for Mathematics and Max Planck Institute for Mathematics for their support and hospitality during his visits.  The authors thank Jim Stasheff for detailed comments on a previous draft.


\section{Background and conventions}\label{Sec:Conventions}

We begin by recalling background information and explaining notation, most of which follows the conventions from \cite{Redden16a, Redden16b, FH}; see \cite{BNV,MR2522659} for further details.  Throughout this paper, we work in the large category $\Man$ of smooth manifolds with smooth maps.  

\subsection{Sheaves of $\infty$-groupoids on the site of manifolds} \label{SEC.Sheaves.of.infty.groupoids}

While we use the language of sheaves and infinity categories, the reader unfamiliar with such terms should not worry.  Our sheaves are simply structures that naturally pullback, and our infinity-groupoids  arise from ordinary $n$-categorical structures with $n \leq 2$.  The particular model of $\Gpdinfty$ we use is the category of Kan complexes, viewed as a simplicially enriched full subcategory of simplicial sets.  Most of our arguments, though, are model-independent.

Any set is naturally a groupoid (a category whose morphisms are all invertible) with only the identity morphisms, and any groupoid determines a 2-groupoid with trivially defined 2-morphisms.  We freely use these natural embeddings (fully faithful functors)
\[ \Set \into \Gpd \into 2\text{-}\Gpd \into \Gpdinfty,\]
but we do not denote them by extra symbols.  For example, the map $\Gpd \into \Gpdinfty$ is the nerve construction, but we usually avoid the symbol $N$ and instead write $\Gpd \subset \Gpdinfty$.  While this may be technically imprecise, we believe it is easier to read and will cause no confusion.  

\begin{rem}Many of our constructions can be regarded as {\it strict} 2-categories.  However, the main results come after passing to $\Gpdinfty$, so issues such as strict 2-category versus bicategory will be irrelevant.  Therefore, we use the term 2-groupoid to refer to $\infty$-groupoids whose homotopy groups vanish above degree 2.  
\end{rem}

A morphism $\varphi \colon \cC \to \cC'$ in $\Gpdinfty$ is an {\it equivalence of $\infty$-groupoids} if it is a homotopy equivalence between the two underlying simplicial sets.  Since $\cC, \cC'$ are Kan complexes, $\varphi$ is an equivalence if and only if it induces isomorphisms between all homotopy groups.  When $\cC, \cC'$ are 2-groupoids, the homotopy groups $\pi_n(\cC^{(')})$ vanish for all $n>2$.  Hence $\varphi$ is an equivalence of 2-groupoids if and only if: $\varphi$ induces a bijection between isomorphism classes of objects $\pi_0(\cC) \iso \pi_0(\cC')$; and $\varphi$ induces an equivalence of categories $\Aut_{\cC}(x) \simeq \Aut_{\cC'}(\varphi(x))$ for all objects $x\in \cC$.  Note that since all morphisms are invertible, this implies the equivalence of categories  $\cC(x,y) \simeq \cC'(\varphi(x), \varphi(y))$ for all $x,y \in \cC$.  While we usually use the symbol $\simeq$ to denote equivalences, we sometimes use $\iso$ when an explicit functor has been constructed.

Define $\Pre_\infty$ as the $(\infty,1)$-category of $\Gpdinfty$-valued presheaves on the site of manifolds.  It is the collection of functors $\Pre_\infty \= \Fun(\Man^\op, \Gpdinfty)$.  We will be primarily interested in presheaves that satisfy a certain sheaf condition.

For $M \in \Man$, we call a surjective submersion $Y \xrightarrow{\pi} M$ a {\it cover} of $M$ and define the associated fiber products
\[ Y^{[k]} \= Y\times_M  \cdots \times_M Y = \{ (y_1, \ldots, y_k) \in Y^k \st \pi(y_1) = \cdots = \pi(y_k) \}, \]
with associated projection maps $\pi_{i_1 i_2 \cdots i_k} : Y^{[p]} \to Y^{[k]}$.  These manifolds form a simplicial manifold $Y^{[\bullet]} \colon \Delta^\op \to \Man$.  When $Y\to M$ is $G$-equivariant, we refer to it as a $G$-cover.

Define $\Shv_\infty$ as the full subcategory of $\Pre_\infty$ whose objects $\cF$ satisfy the following {\it sheaf/descent condition}: for any cover $U \to X$ with discrete fibers, the natural map 
\begin{equation}\label{Eq:SheafCond} \cF(X) \xrightarrow{\simeq} \holim_{\Delta} \Big[  \begin{tikzcd}[column sep=small]\cF(U) \ar[r, shift right=1.5] \ar[r, shift left=1.5]& \cF(U^{[2]}) \ar[l, dashed] \ar[r, shift right=3] \ar[r] \ar[r, shift left=3] & \cF(U^{[3]})  \ar[l, dashed, shift left=1.5] \ar[l, dashed, shift right=1.5]   \end{tikzcd} \cdots \Big]\end{equation}
is an equivalence in $\Gpdinfty$.  This recovers the usual sheaf condition when $\cF$ is $\Set$-valued, and it recovers the stack condition for presheaves of groupoids (for this reason objects in $\Shv_\infty$ are often called $\infty$-stacks).  The inclusion $\Shv_\infty \into \Pre_\infty$ has a left adjoint $\L$, which we call the {\it sheafification}, that gives natural equivalences of $\infty$-groupoids
\begin{equation}\label{Eq:L} \Pre_\infty(\cF, \cF') \iso \Shv_\infty(\L(\cF), \cF')  \end{equation}
for all $\cF \in \Pre_\infty$, $\, \cF' \in \Shv_\infty$.  Here, $\Pre_\infty(\cF, \cF') \in \Gpdinfty$ is the collection of maps $\cF \to \cF'$, which are natural transformations between the functors $\cF,\cF' \in \Fun(\Man^{\op}, \Gpdinfty)$.

\subsection{Relevant examples}
\begin{example}[Smooth maps and Yoneda] Any manifold $M\in \Man$ defines a sheaf $M \in \Shv_\infty$, whose value on a test manifold $X$ is given by the set of smooth functions
\[M(X) \= C^\infty(X, M) \in \Set \subset \Gpdinfty.\]
The Yoneda Lemma states there is a natural equivalence $\Pre_\infty(M, \cF) \iso \cF(M) \in \Gpdinfty$ for any $\cF \in \Pre_\infty$.
\end{example}

\begin{example}[Differential forms]Let $\O^n \in \Shv_\infty$ be the sheaf that assigns to any test manifold $X$ the differential $n$-forms $\O^n(X) \in \Set \subset \Gpdinfty$.  Note that $\O^n(X)$ is considered as a  category with no non-trivial morphisms, as opposed to the $\infty$-groupoid obtained from the de Rham complex $\cdots \to \O^{n-1}(X) \xrightarrow{d} \O^n(X)$.
\end{example}

\begin{example}[Bundles] For $X \in \Man$ and $K$ a Lie group, let $\Bunc[K](X)$ denote the groupoid of principal $K$-bundles with connection on $X$; morphisms are bundle isomorphisms preserving the connection and covering the identity map on $X$.  Principal bundles and connections naturally pullback along smooth maps, and they can be glued, thus defining a stack $\Bn K \in \Shv_\infty$ by 
\[ (\Bn K)(X) \= \Bunc[K](X) \in \Gpd \subset \Gpdinfty.\]
Similarly, we can ignore connections and define a stack by $(\B K)(X) \= \Bun[K](X)$.
\end{example}

For $G$ a Lie group, assumed to be compact throughout this paper, let $G\GMan$ denote the category whose objects are smooth manifolds equipped with a smooth (left) $G$-action, and whose morphisms are $G$-equivariant smooth maps.  We follow the convention that $M \in G\GMan$ has a left $G$-action, $P \in \Bun[G](X)$ has a right $G$-action, and inverses are used to switch between left/right actions when necessary.

\begin{example}[Quotient stacks]
If $M \in G\GMan$, define the {\it differential quotient stack} $\En G \times_G M \in \Shv_\infty$ by assigning to a test manifold $X$ the groupoid whose objects are $(P,\Theta,f)$, where $(P,\Theta) \in \Bunc[G](X)$ and $f\colon P \to M$ is a $G$-equivariant map.  Morphisms are given by morphisms in $\Bunc[G](X)$ compatible with maps $f$.  When $M=\pt$, which is trivially a $G$-manifold, then $\En G\times_G \pt = \Bn G$.  The {\it quotient stack} $\E G \times_G M \in \Shv_\infty$ is defined similarly, but without including connections.  The quotient stack serves as a replacement for the more familiar homotopy quotient $EG\times_G M$, and the adjective ``differential'' indicates the presence of local geometric data given by differential forms.
\end{example}

\begin{rem}\label{Rem:ActionGpd} For $M\in G\GMan$, the {\it action groupoid} is the Lie groupoid $ (G\times M \rightrightarrows M)$, which becomes the simplicial manifold $M\sslash G = G^\bullet \times M$ when viewed in $\Gpdinfty$.  The Yoneda embedding gives rise to a presheaf of groupoids with value 
\[ (M\sslash G)(X) \= N\left( C^\infty(X, G\times M) \rightrightarrows C^\infty(X, M)  \right) \in \Gpdinfty \]
on a test manifold $X$.  This presheaf does not satisfy the descent condition, but its sheafification is equivalent to the quotient stack
$ \L(M\sslash G) \xrightarrow{\simeq} \E G \times_G M \in \Shv_\infty.$
Similarly, the differential quotient stack $\En G \times_G M$ is equivalent to the sheafification of the differential action groupoid $M \nslash G \= N \left( G \times \O^1(\fg)\times M \rightrightarrows \O^1(\fg)\times M \right)$, where $\O^1(\fg)$ is the sheaf of $\fg$-valued 1-forms \cite{FH}. 
\end{rem}

When $BK$ is regarded as a fixed topological space, there is no natural way to choose a classifying map $X\to BK$ for a given bundle with connection $(P,\Theta) \in \Bunc[K](X)$.  In contrast, the Yoneda Lemma says that $(P,\Theta)$ is naturally equivalent to a map of stacks $X \xrightarrow{(P,\Theta)} \Bn K$.  Building upon this idea, we extend structures from manifolds to structures on sheaves in the following way.  For $\cF, \cM \in \Shv_\infty$,  {\it define} 
\[ \cF( \cM) \= \Shv_\infty(\cM, \cF) \in \Gpdinfty.\]
When $M$ is a manifold, we recover the usual definition of $\cF(M)$.  If $\cF(X) \in n\text{-}\Gpd$ for all $X$, then $\cF(\cM) \in n\text{-}\Gpd$ as well.

\begin{example}[Equivariant forms] There is a natural isomorphism $\O_G(M) \iso \O(\En G \times_G M)$ \cite{FH}, which we now explain.  The cochain complex $(\O^*_G(M), d_G)$ of {\it equivariant differential forms} is defined using the Weil model as the basic subcomplex
\begin{gather*} \O_G(M) \= \left( S\fg^* \otimes \Lambda \fg^* \otimes \O(M) \right)_{\bas}, \quad |S^1 \fg^*| = 2, \quad | \Lambda^1 \fg^* | = 1,
\end{gather*}
and for compact $G$ there is a natural de Rham isomorphism
\[ H^n(\O^*_G(M), d_G) \xrightarrow{\iso} H^n_G(M;\R) \= H^n(EG \times_G M;\R).\]
If $(P,\Theta) \in \Bunc[G](X)$ with curvature $\O$, the Weil homomorphism induces a natural map
\begin{equation}\label{Eq:WeilHomo} \begin{tikzcd}[row sep=0]
\Theta^*: \O^n_G(P) = \left(S\fg^* \otimes \Lambda \fg^* \otimes \O(P) \right)^n_{\bas} \ar[r] &\O^n(P)_\bas \iso \O^n(X) \\
\quad \quad \quad \quad \alpha \otimes \beta \otimes \gamma \ar[r,mapsto] & \alpha(\Omega) \wedge \beta(\Theta) \wedge \gamma.
\end{tikzcd} \end{equation}

By the Yoneda Lemma, a differential form $\omega \in \O^n(X)$ is equivalent to a map $X \xrightarrow{\omega} \O^n$.  For $M \in G\GMan$, it is therefore natural to define 
\begin{equation} \O^n (\En G \times_G M) \= \Shv_\infty( \En G \times_G M, \O^n) \in \Set \subset \Gpdinfty.
\end{equation}
Unpackaging this, a form $\omega \in \O^n(\En G \times_G M)$ assigns to every $X \xrightarrow{(P,\Theta,f)} \En G \times_G M$ a differential form $\omega(P,\Theta,f) \in \O^n(X)$; since $\O^n(X)$ has no non-trivial morphisms, a map of principal $G$-bundles $P' \xrightarrow{\varphi} P$ gives an equality of differential forms
\[ \begin{tikzcd}[column sep=small]
(P', \varphi^*\Theta) \ar[r, "\varphi"] \ar[d] & (P,\Theta) \ar[r,"f"] \ar[d] & M \\
X' \ar[r, "\bar{\varphi}"] & X
\end{tikzcd} \quad \mapsto \quad
 \omega(P', \varphi^*\Theta, f \o \varphi) = \bar{\varphi}^* \omega(P,\Theta, f) \in \O^n(X').
 \]
 
In \cite{FH}, Freed--Hopkins show the natural map induced by the Weil homomorphism gives an isomorphism, or bijection of sets, 
\begin{equation}\label{Eq:EquivForms} \O^n_G(M) \xrightarrow{\iso} \O^n(\En G \times_G M).\end{equation}
Explicitly, a form $\omega \in \O^n_G(M)$ evaluates on $(P,\Theta,f) \in (\En G \times_G M)(X)$  as the composition
\[  \O^n_G(M) \xrightarrow{f^*} \O^n_G(P) \xrightarrow{\Theta^*} \O^n(X),\]
with $\omega(P,\Theta, f) \= \Theta^* f^* \omega$.  The familiar equivariant differential $d_G$ is induced by the universal map between sheaves $\O^n \xrightarrow{d} \O^{n+1}$.  See \cite{GuilleminSternberg99} or \cite[Appendix A]{Redden16a} for further details of equivariant de Rham theory.
\end{example}

\begin{rem}One can equivalently use the Cartan model $\left(  S\fg^* \otimes \O(M) \right)^G$ for $\O_G(M)$, since Mathai--Quillen give a natural isomorphism of cochain complexes (not merely a quasi-isomorphism) between the Cartan and Weil models \cite{MathaiQuillen, MR1218928}.
\end{rem}

For $M \in G\GMan$, let $G\GBunc[K](M)$ denote the groupoid of {\it $G$-equivariant principal $K$-bundles on $M$ with $G$-invariant connection}; morphisms are $G$-equivariant bundle isomorphisms that preserve the connections.  For brevity, we refer to $(Q,\Theta) \in G\GBunc[K](M)$ as a $G$-equivariant $K$-connection on $M$.  Equivariant connections always exist when $G$ is compact, meaning the forgetful map $G\GBunc[K](M) \to G\GBun[K](M)$ is surjective on objects.

Suppose that $P \in G\GMan$ has a free $G$-action, so that we can regard $P \in \Bun[G](X)$.  Any $Q \in G\GBun[K](P)$ naturally descends to a bundle $Q/G \in \Bun[K](X)$.  However, a $K$-connection $\Theta_Q$ on $Q$ will descend to $Q/G$ if and only if it is $G$-basic.  If $(P,\Theta_P) \in \Bunc[G](X)$, then there is a canonical way to modify the connection on $Q$ by $\Theta_P$, producing a $G$-basic connection 
\[ (1- \iota_{\Theta_P})\Theta_Q \in \O^1(Q, \fk)_{G\text{-}\bas} \iso \O^1(Q/G;\fk),\]
as explained in \cite[Section 4.2]{Redden16a} and \cite[Section 3]{Redden16b}.  This connection form on $Q/G$ equals the image of $\Theta_G \in \O_G^1(Q;\fk)$, the $G$-equivariant extension of $\Theta_Q$, under the Weil homomorphism \eqref{Eq:WeilHomo}.  Reusing the same symbol, for any
$(P,\Theta_P) \in \Bunc[G](X)$ we denote the natural descent functor by 
\begin{equation}\label{Eq:Theta*}  G\GBunc[K](P) \xrightarrow{\Theta_P^*} \Bunc[K](X). \end{equation}

\begin{example}[Equivariant connections]\label{Ex:EquivConns}
The construction \eqref{Eq:Theta*} naturally induces a functor $G\GBunc[K](M) \to \Shv_\infty(\En G \times_G M, \Bn K)$, as explained in both \cite[Section 4.2]{Redden16a} and \cite{Fre16}.  For $(Q,\Theta_Q) \in G\GBunc[K](M)$, define $(\En G \times_G M)(X) \to (\Bn K)(X)$ for every test manifold $X$ by
\begin{equation}\label{Eq:ConnsQuot} \begin{tikzcd}[row sep=15] (P,\Theta_P) \ar[r, "f"] \ar[d] & M \\ X \end{tikzcd} \quad \longmapsto \quad  \Theta_P^* (f^*(Q,\Theta_Q)) \in \Bunc[K](X),\end{equation}
using the composition
\[ G\GBunc[K](M) \xrightarrow{f^*} G\GBunc[K](P) \xrightarrow{\Theta_P^*} \Bunc[K](X).\]
This construction establishes a functor between groupoids
\begin{equation}\label{Eq:EquivConns} \begin{tikzcd} G\GBunc[K](M) \ar[r, "\iso", shift left] & \Bunc[K](\En G \times_G M) \= \Shv_\infty(\En G\times_G M, \Bn K),  \ar[l, shift left] \end{tikzcd} \end{equation}
which was proven to be an equivalence in \cite{Redden16b} through the construction of an explicit inverse functor.  Furthermore, consider the map $\En G\times_G M \to \Bn K$ induced by a specific $(Q,\Theta) \in G\GBunc[K](M)$.  As shown in \cite[Theorem 5.3]{Redden16a} and \cite[Theorem 8]{Fre16}, the induced map on differential forms 
\[ (S^n \fk^*)^K \iso \O^{2n}(\Bn K) \longrightarrow \O^{2n}(\En G\times_G M) \iso \O^{2n}_G(M) \]
recovers the familiar equivariant Chern--Weil homomorphism of Berline--Vergne \cite{BV83}.
\end{example}

\begin{example}[Gerbes] When working with gerbes, we always consider gerbes with band $S^1$.  Let $\Grbc(X) \in 2\text{-}\Gpd$ be the 2-groupoid of bundle gerbes with connection on $X$, as defined in \cite{Waldorf-More} and described in Section \ref{Sec:Defn}.  As explained in \cite[Section 4.1]{NikSch-Equiv}, bundle gerbes satisfy the descent condition \eqref{Eq:SheafCond}, and we denote the corresponding 2-stack by $\BBn S^1 \in \Shv_\infty$, where
\[ (\BBn S^1)(X) \= \Grbc(X) \in 2\text{-}\Gpd \subset \Gpdinfty.\]
When $M \in G\GMan$, the discussion of equivariant connections in Example \ref{Ex:EquivConns} suggests that a ``correct'' definition of $G$-equivariant gerbe connections on $M$ is given by $\Grbc(\En G \times_M G) \= \Shv_\infty( \En G \times_G M, \BBn S^1)$.  This claim is what we ultimately prove in Theorems \ref{Thm:IsoClasses} and \ref{Thm:Equiv2Gpd}.
\end{example}

\subsection{Cohomology and differential cohomology}\label{Subsec:Coh}
Using sheaves of $\infty$-groupoids on manifolds allows us to easily work with cohomology and differential cohomology.  In doing so, we often use the Dold--Kan equivalence between non-negatively graded chain complexes of abelian groups and simplicial abelian groups 
\[ \DK \colon \Ch \overset{\simeq}\longrightarrow \Ab^{\Delta^{\op}} . \]
Let $A$ be a sheaf of abelian Lie groups, viewed as a $\Set$-valued sheaf; i.e. $A(X) \in \Ab \subset \Set \subset \Gpdinfty$ for every $X \in \Man$.  By placing $A$ in degree $n$ of a chain complex, using the Dold--Kan functor and sheafifying, one gets an object 
\[\L (\DK(A[-n])) = \L(\DK( \cdots \to  0 \to A \to 0 \to \cdots \to 0)) \in \Shv_\infty.\]
For $\cM \in \Shv_\infty$, define the sheaf cohomology with coefficients in $A$ by 
\begin{equation}\label{Eq:SheafCohDef} H^n_{\Shv} (\cM; A) \= \pi_0 \Shv_\infty \big( \cM, \L(\DK(A[-n])) \big) \in \Ab. \end{equation}
If $M \in \Man$, then $H^*_{\Shv}(M;A)$ recovers the traditional sheaf cohomology groups \cite{MR0341469, MR2522659}. 

When $A=A^\delta$ is a discrete group, then $\K(A,n) \= \L(\DK(A[-n]))$ is a $\Shv_\infty$ analogue of an Eilenberg--MacLane space.   Since $\K(A,n)$ is homotopy-invariant, the arguments and techniques from \cite{BNV} imply natural isomorphisms
\begin{equation}\label{Eq:EquivCoh} \begin{split} H^n_{\Shv}(\En G \times_G M; A) &\iso H^n_{\Shv}(\E G \times_G M; A) \\ &\iso \pi_0\Top(EG \times_G M, K(A,n)) \iso H^n_G(M;A), \end{split} \end{equation}
as proven in \cite[Proposition 6.7]{Redden16a}.

The differential cohomology groups, also known as Cheeger--Simons differential characters \cite{CS85} or Deligne cohomology \cite{MR0498551, Bry93}, can be defined in a similar way \cite{BNV, MR3019405, MR3335251}.  Following the notation from \cite{Redden16a}, let $\Kh(\Z, n) \in \Shv_\infty$ be a homotopy pullback in the diagram 
\[ \begin{tikzcd} \Kh(\Z, n) \ar[r]\ar[d]& \K(\Z,n) \ar[d]  \\
\O^n_{\operatorname{cl}} \ar[r] & \K(\R, n). 
\end{tikzcd}\]
For $M\in G\GMan$ we define the {\it differential equivariant cohomology} groups as 
\[ \HG^{n}(M) \= \HG[]^n(\En G \times_G M) \= \pi_0 \Shv_\infty( \En G \times_G M, \Kh(\Z, n)).\]
Just as with traditional differential cohomology, the groups $\HG^*(M)$ sit in the following three short exact sequences \cite{Redden16a}.
\begin{align}
\label{ses1}  0 \longrightarrow H^{n-1}_G(M; \R/\Z) \longrightarrow &\HG^n(M) \longrightarrow \O^n_G(M)_\Z \longrightarrow 0 \\
\label{ses2} 0 \longrightarrow \frac{\O^{n-1}_G(M)}{\O^{n-1}_G(M)_\Z} \longrightarrow &\HG^n(M) \longrightarrow  H^n_G(M;\Z) \longrightarrow 0 \\
\label{ses3} 0 \longrightarrow \frac{H^{n-1}_G(M;\R)}{H^{n-1}_G(M;\R)_\Z} \longrightarrow &\HG^n(M) \longrightarrow A^n_G(M) \longrightarrow 0
\end{align}
Here $A^n(M)$ denotes the pullback (of sets) in the diagram
\begin{equation}\label{Eq:PullbackA} \begin{tikzcd} \HG^n(M) \ar[r, two heads] \ar[d, two heads] & H^n_G(M;\Z) \ar[d, two heads] \\
\O_G^n(M)_\Z \ar[r, two heads] & H^n_G(M;\R)_\Z 
\end{tikzcd} \end{equation}
in which all maps are surjective.  The subscript $\Z$ denotes the closed forms with integer periods and their image in de Rham cohomology.  The groups $\HG^n(M)$ were also defined, independently and using different techniques, in \cite{KubelThom-EDC}.

The equivalence $\BBn S^1 \simeq \L ( \DK (S^1 \xrightarrow{d} \O^1 \xrightarrow{d} \O^2 )) \simeq \Kh(\Z,3)$, which is sometimes used as the definition of $\BBn S^1$, immediately implies
\begin{equation}\label{Eq:DiffH3} \pi_0  \Shv_\infty (\En G \times_G M, \BBn S^1) \iso \HG^3(M).
\end{equation}
This gives further evidence that $\Grbc (\En G \times_G M)$ is a desirable model for equivariant gerbe connections.  


\section{The 2-groupoid of equivariant bundle gerbe connections}\label{Sec:Defn}

For a $G$-manifold $M \in G\GMan$, we consider a geometrically defined 2-category $G\GGrbc(M)$ of {\it $G$-equivariant bundle gerbe connections}, which is an abbreviated way of saying ``$G$-equivariant bundle gerbes on $M$ with $G$-equivariant connection.''   Essentially, this is obtained from Waldorf's 2-groupoid $\Grbc(M)$ of bundle gerbes with connection (and invertible 1-morphisms) \cite{Waldorf-More} and appending the adjective ``$G$-equivariant'' to everything; $G\GGrbc(M)$ reduces to $\Grbc(M)$ when $G$ is the trivial group.  The objects in $G\GGrbc(M)$ were originally defined by Meinrenken \cite{Mein-Grb} and further developed by Sti\'enon \cite{Sti10} and Tu--Xu \cite{TuXu15}.  The key point is that the curving 2-form is an {\it equivariant} 2-form in $\O^2_G(Y)$, as opposed to other possible definitions  \cite{MR2770022, Gomi-EquivGrbs}.  We define the groupoid of morphisms in $G\GGrbc(M)$ by adapting Waldorf's structures \cite{Waldorf-More} to the equivariant setting.

For $M \in G\GMan$,  define a {\it $G$-cover} to be a $G$-equivariant surjective submersion $Y \xrightarrow{\pi} M$; the fiber products $Y^{[p]}$ and projections $\pi_{i_1 \cdots i_k}$ are naturally $G$-equivariant.  Let $\delta$ denote the simplicial differential on $\O^k(Y^{[\bullet]})$, which is given by the alternating sum of pullbacks.  When $G$ is compact, the natural sequences 
\begin{gather}\label{Eq:CoverSeq1} 0 \to \O^k(M)  \xrightarrow{\pi^*} \O^k(Y) \xrightarrow{\delta} \O^k(Y^{[2]}) \xrightarrow{\delta} \O^k(Y^{[3]}) \xrightarrow{\delta} \cdots \\
\label{Eq:CoverSeq2} 0 \to \O^k(M)^G  \xrightarrow{\pi^*} \O^k(Y)^G \xrightarrow{\delta} \O^k(Y^{[2]})^G \xrightarrow{\delta} \O^k(Y^{[3]})^G \xrightarrow{\delta} \cdots \\
\label{Eq:CoverSeq3} 0 \to \O^k_G(M)  \xrightarrow{\pi^*} \O^k_G(Y) \xrightarrow{\delta} \O^k_G(Y^{[2]}) \xrightarrow{\delta} \O^k_G(Y^{[3]}) \xrightarrow{\delta} \cdots \end{gather}
are exact, as shown in \cite[Lemma 3.3]{Sti10}.  

We also use $\delta$ to denote the analogous construction for $S^1$-bundles.  In particular, for $L \in G\GBun(Y)$, 
\[ \delta L = \pi_2^*L \otimes \pi_1^*L^{-1} \in G\GBun(Y^{[2]}),\]
and for $L \in G\GBun(Y^{[2]})$, then
\[ \delta L = \pi_{23}^* L \otimes \pi_{13}^*L^{-1}\otimes \pi_{12}^* L \in G\GBun(Y^{[3]}).\]
The construction $\delta$ also extends to $G$-equivariant principal $S^1$-bundles with $G$-invariant connection on $Y^{[k]}$, which we denote by $G\GBunc(Y^{[k]})$.  We denote the trivial bundle (with trivial connection) by $1$, as it is naturally a unit under the tensor product.  

\begin{rem}Our notation implicitly uses the natural equivalence between principal $S^1$-bundles with connection and Hermitian line bundles with connection.  For example, the product of two principal $S^1$-bundles is actually defined by the associated bundle construction, though we denote it by the tensor product.  While our notation is technically imprecise, it is easier to read and should cause no confusion.  
\end{rem}

We now write the full definition of $G\GGrbc(M)$ for completeness, but almost all of Waldorf's arguments immediately generalize.  This is because $G$-equivariant submersions and $G$-equivariant bundles pull back along $G$-equivariant maps, and the tensor product of two $G$-equivariant line bundles is again a $G$-equivariant line bundle.  We will mainly reference Waldorf's proofs and only provide further details when some care is required. 

\subsection{The definition}\label{subsec:Defn}
\begin{defn}[Objects, c.f. \cite{Waldorf-More, Mein-Grb, Sti10}]An object $\cLh = (Y,L,\nabla, B, \mu) \in G\GGrbc(M)$ consists of:
\begin{itemize}
\item A $G$-cover $Y \xrightarrow{\pi} M$,
\item $(L,\nabla) \in G\GBunc[S^1](Y^{[2]})$,
\item $B \in \O^2_G(Y)$ satisfying $\curv_G(\nabla) = \pi_2^*B-\pi_1^*B \in \O^2_G(Y^{[2]})$,
\item and an isomorphism $\mu \colon \pi_{12}^*(L,\nabla) \otimes \pi_{23}^* (L,\nabla) \to \pi_{13}^*(L,\nabla)$  in $G\GBunc[S^1](Y^{[3]})$,  called the {\it bundle gerbe multiplication}, that is associative in that the following diagram in $G\GBunc(Y^{[4]})$ commutes.
\[ \begin{tikzcd}[column sep=huge] \pi_{12}^*L \otimes \pi_{23}^*L \otimes \pi^*_{34}L \ar[r, "\pi_{123}^*\mu \otimes \Id"] \ar[d, "\Id \otimes \pi^*_{234} \mu"'] & \pi^*_{13}L \otimes \pi^*_{34} L  \ar[d, "\pi^*_{134} \mu"] \\
\pi^*_{12} L \otimes \pi^*_{24} L \ar[r, "\pi^*_{124}\mu"'] & \pi^*_{14} L 
\end{tikzcd} \]
\end{itemize}
\end{defn}

\begin{rem}Readers feeling overwhelmed by the abundance of pullbacks can suppress them by writing $\pi_{ij}^*(L,\nabla)$ as $(L_{ij},\nabla^{ij})$.  The key properties are then written as $\curv_{G}(\nabla^{ij})= B_j-B_i$,  $\mu: L_{i j} \otimes L_{jk} \to L_{ik}$, and $\mu(\mu(L_{ij}\otimes L_{jk}) \otimes L_{kl}) = \mu(L_{ij}\otimes \mu(L_{jk} \otimes L_{kl}))$.
\end{rem}

\begin{rem}\label{Rem:EquivalentDefn}The isomorphism $\mu$ is equivalent to $\upmu \colon \delta(L,\nabla) \to 1$  satisfying $\delta \upmu \iso \Id$ under the natural isomorphism $\delta(\delta(L,\nabla)) \iso 1$.
\end{rem}

\begin{defn}[1-morphisms, c.f. \cite{Waldorf-More}]\label{Defn:1morph} For $\cLh_i = (Y_i, L_i, \nabla_i, B_i, \mu_i) \in G\GGrbc(M)$, an isomorphism $\cLh_1 \xrightarrow{\cKh} \cLh_2$, denoted $\cKh =(Z,K,\nabla_K,\alpha)\in G\GGrbc(\cLh_1, \cLh_2)$, consists of:
\begin{itemize}
\item a $G$-cover $\zeta\colon Z \to Y_1 \times_M Y_2$,
\item $(K,\nabla_K) \in G\GBunc(Z)$ such that $\curv_G(\nabla_K) = \zeta^*(B_2 - B_1) \in \O^2_G(Z)$,
\item and an isomorphism $\alpha\colon (L_1,\nabla_1) \otimes \zeta_2^*(K,\nabla_K) \to \zeta_1^*(K,\nabla_K) \otimes (L_2, \nabla_2)$ in $G\GBunc(Z\times_M Z)$ that is compatible with $\mu_1$ and $\mu_2$, in that the following diagram in $G\GBunc(Z\times_M Z \times_M Z)$ commutes.
\[ \begin{tikzcd}[column sep=huge] \zeta^*_{12} L_1 \otimes \zeta^*_{23} L_1 \otimes \zeta_3^* K \ar[r, "\mu_1 \otimes \Id"] \ar[d, "\Id \otimes \zeta^*_{23} \alpha" '] & \zeta^*_{13} L_1 \otimes \zeta_3^* K  \ar[dd, "\zeta^*_{13}\alpha"] \\
\zeta^*_{12}L_1 \otimes \zeta^*_2 K \otimes \zeta^*_{23} L_2  \ar[d, "\zeta^*_{12}\alpha \otimes \Id" '] \\
\zeta^*_1 K \otimes \zeta^*_{12} L_2 \otimes \zeta^*_{23} L_2 \ar[r, "\Id \otimes \mu_2"'] & \zeta^*_1 K \otimes \zeta^*_{13} L_2
\end{tikzcd} \]
\end{itemize}
\end{defn}

\begin{rem}Throughout the paper, we avoid writing pullbacks when they are obvious so as not to further complicate notation.  For example, in the above definition we use $L_i \in G\GBunc(Z\times_M Z)$ to denote the bundle obtained by pulling back $L_i \in G\GBunc(Y_i^{[2]})$ along the obvious map $Z\times_M Z \to Y_i^{[2]}$.
\end{rem}

\begin{rem}The 1-morphisms in \cite{Waldorf-More} are more general than those in Definition \ref{Defn:1morph}.  We restrict restrict ourselves to Waldorf's invertible morphisms, which are given by vector bundles of rank one.  An analog of Waldorf's ``more morphisms'' category can be obtained by letting $(K,\nabla) \in G\GBunc[U(n)](M)$, but we do not consider them in this paper.
\end{rem}

\begin{defn}[2-morphisms, c.f. \cite{Waldorf-More}]For $\cKh_1, \cKh_2 \in G\GGrbc(\cLh_1, \cLh_2)$, a 2-morphism $\cJh \colon \cKh_1 \Rightarrow \cKh_2$ in $G\GGrbc(M)$, which can also be viewed as a 1-morphism in the groupoid $G\GGrbc(\cLh_1, \cLh_2)$, is denoted $\cJh = (W, \beta)$ and is an equivalence class of the following:
\begin{itemize}
\item a  $G$-equivariant surjective submersion $\omega \colon W \to Z_1 \times_{(Y_1 \times_M Y_2)} Z_2$, 
\item  and an isomorphism $\beta \colon (K_1,\nabla_1) \to (K_2, \nabla_2)$ in $G\GBunc(W)$ compatible with $\alpha_1$ and $\alpha_2$, in the sense that the following diagram in $G\GBunc(W\times_M W)$ commutes.
\[ \begin{tikzcd} L_1 \otimes \omega_2^*A_1 \ar[r, "\alpha_1"] \ar[d, "\Id \otimes \omega_2^* \beta"'] & \omega_1^* A_1 \otimes L_2 \ar[d, "\omega_1^* \beta \otimes \Id"] \\ 
L_1 \otimes \omega_2^* A_2 \ar[r, "\alpha_2"'] & \omega_1^* A_2 \otimes L_2
\end{tikzcd} \]
\end{itemize}
Two $2$-morphisms $\cJh_i=(W_i, \beta_i) \colon \cKh \Rightarrow \cKh'$ are equivalent if there exists $G$-covers $\pi_i:X\ra W_i$ for $i=1,2$ such that $\om_1 \circ \pi_1=\om_2 \circ \pi_2$ and $\pi_1^*\be_{1}=\pi_2^*\be_{2}$.
\end{defn}

Morphisms can be easily composed using pullbacks, since the definitions of morphisms include the additional datum of a $G$-cover.  These details are carefully discussed in \cite[Section 1]{Waldorf-More}, and they immediately generalize to the equivariant setting.  Composition in the groupoid $G\GGrbc(\cLh_1, \cLh_2)$, also referred to as {\it vertical composition} and denoted by $\bullet$, is defined by the following.  If $\cJh \colon \cKh_1 \Rightarrow \cKh_2$ and $\cJh' \colon \cKh_2 \Rightarrow \cKh_3$, then $\cJh' \bullet \cJh \colon \cKh_1 \Rightarrow \cKh_3$ is given by the $G$-cover $\wt{W} = W\times_{Z_2} W' \to Z_1 \times_{Y_1\times_M Y_2} Z_3$ and isomorphism $\beta' \o \beta \colon K_1 \to K_3$ in $G\GBunc(\wt{W})$.  The {\it horizontal composition} of 1-morphisms in $G\GGrbc(M)$ is defined in a similar way.  For $\cKh \in G\GGrbc(\cLh_1, \cLh_2)$, $\cKh' \in G\GGrbc(\cLh_2, \cLh_3)$, the composition $\cKh' \o \cKh = (\wt{Z}, \wt{K}, \wt{\nabla}, \wt{\alpha}) \in G\GGrbc(\cLh_1, \cLh_3)$ is defined by 
$\wt{Z} \= Z\times_{Y_2}  Z' \to Y_1\times_M Y_3$, with $(\wt{K},\wt{\nabla}) = (K,\nabla_K) \otimes(K',\nabla_{K'})$ and with isomorphism 
$\wt{\alpha}=( \Id_{\zeta_1^{*} K} \otimes \alpha' ) \o ( \alpha \otimes \Id_{\zeta_2^{'*}K'} )$ in $G\GBunc( \wt{Z}\times_M \wt{Z})$.  Horizontal composition of 2-morphisms is defined as well, and we refer the interested reader to \cite[pp. 250-1]{Waldorf-More} for full details.

By ignoring all the connections and differential forms, we may similarly define $G\GGrb(M)$, the 2-groupoid of $G$-equivariant gerbes without connection.  There is a natural forgetful functor
\begin{align*}
G\GGrbc(M) &\longrightarrow G\GGrb(M) \\
\cLh=(Y,L,\nabla, B, \mu) &\longmapsto \cL = (Y, L, \mu),
\end{align*}
which we denote by eliminating the hat $\text{ }\widehat{ }\text{ }$ symbol.  By following the same arguments from \cite[Section 1]{Waldorf-More}, we conclude the following.

\begin{thm}[c.f. \cite{Waldorf-More}]If $M \in G\GMan$, then $G\GGrbc(M)$ and $G\GGrb(M)$ are well-defined 2-groupoids.
\end{thm}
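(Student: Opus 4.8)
The plan is to reduce the statement to Waldorf's proof that $\Grbc(M)$ and $\Grb(M)$ are 2-groupoids \cite{Waldorf-More} and to verify that each step survives the addition of the adjective ``$G$-equivariant.'' Concretely, three things must be checked: that horizontal and vertical composition of morphisms are well-defined, unital, and associative up to a coherent system of 2-isomorphisms (satisfying the pentagon and triangle identities); that every 2-morphism is invertible; and that every 1-morphism is invertible. All of Waldorf's formulas for composition, identities, associators and unitors are built solely from fiber products of surjective submersions and from pullbacks, tensor products, and duals of line bundles with connection. In the equivariant setting, a fiber product of $G$-equivariant submersions along $G$-equivariant maps is again a $G$-equivariant submersion, and the pullback, tensor product and dual of objects of $G\GBunc(-)$ again lie in $G\GBunc(-)$, with $\curv_G$ behaving additively under tensor products and naturally under equivariant pullback. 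Hence each of Waldorf's formulas makes sense verbatim and each identity it satisfies continues to hold. One also checks that the equivalence relation defining 2-morphisms is compatible with both compositions, which again only requires that a common refinement of two $G$-covers can be chosen $G$-equivariantly.

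For invertibility of 2-morphisms, a representative $(W,\beta)$ of $\cJh \colon \cKh_1 \Ra \cKh_2$ has underlying datum an isomorphism $\beta$ in $G\GBunc(W)$, and $(W,\beta^{-1})$ represents an inverse, its compatibility square with $\alpha_1,\alpha_2$ being obtained by inverting $\beta$ in the original square; so the morphism categories $G\GGrbc(\cLh_1,\cLh_2)$ are genuine groupoids. For invertibility of 1-morphisms we use that we have restricted to Waldorf's invertible 1-morphisms, namely those whose bundle datum $(K,\nabla_K)$ has rank one. Given $\cKh = (Z,K,\nabla_K,\alpha) \in G\GGrbc(\cLh_1,\cLh_2)$, one defines $\cKh^{-1} \in G\GGrbc(\cLh_2,\cLh_1)$ by composing $\zeta$ with the swap $Y_1\times_M Y_2 \xrightarrow{\iso} Y_2\times_M Y_1$, replacing $(K,\nabla_K)$ by the dual $(K^{-1},\nabla_K^{-1}) \in G\GBunc(Z)$ (which has $\curv_G(\nabla_K^{-1}) = \zeta^*(B_1-B_2)$), and replacing $\alpha$ by the isomorphism induced by $\alpha^{-1}$; the canonical trivialization $K\otimes K^{-1}\iso 1$ in $G\GBunc(Z)$ then produces 2-isomorphisms $\cKh^{-1}\o\cKh \iso \Id_{\cLh_1}$ and $\cKh\o\cKh^{-1} \iso \Id_{\cLh_2}$. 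This is exactly Waldorf's argument, with every submersion, bundle, connection and isomorphism appearing in it now $G$-equivariant. The statement for $G\GGrb(M)$ then follows by discarding all connections and differential forms, when the same reasoning applies.

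The upshot is that there is no genuinely new obstacle here: the content is the observation that the categories $G\GMan$ and $G\GBunc(-)$ (resp. $G\GBun(-)$) are closed under the operations Waldorf uses, so that \cite[Section 1]{Waldorf-More} transcribes directly. The one place where more than formal bookkeeping is needed is checking the equivariant-curvature identities under these operations — for instance that the descended connection in a composite $\cKh'\o\cKh$ of 1-morphisms $\cLh_1\to\cLh_2\to\cLh_3$ has equivariant curvature the appropriate pullback of $B_3-B_1$ — which follows from additivity and naturality of $\curv_G$ together with the exactness of the sequences \eqref{Eq:CoverSeq1}--\eqref{Eq:CoverSeq3} from \cite[Lemma 3.3]{Sti10} wherever such input is required; this is the equivariant analogue of the de Rham facts in Waldorf's proof. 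I expect this to be the main, though still routine, point, with everything else a direct transcription.
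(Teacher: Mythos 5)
Your proposal is correct and follows essentially the same route as the paper, which simply observes that Waldorf's arguments from \cite[Section 1]{Waldorf-More} carry over verbatim because $G$-equivariant submersions, bundles, connections, tensor products and duals are closed under the pullback and fiber-product operations used in all of his constructions. Your additional remarks on invertibility of 1- and 2-morphisms and on the equivariant curvature identities are accurate elaborations of the same reduction rather than a different argument.
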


\subsection{Basic properties}

It is evident that a $G$-equivariant map $M_1 \xrightarrow{f} M_2$ induces  functors
\begin{align}\label{Eq:Pullback}
\begin{split}G\GGrbc(M_2) &\xrightarrow{f^*} G\GGrbc(M_1), \\
G\GGrb(M_2) &\xrightarrow{f^*} G\GGrb(M_1), \end{split}
\end{align}
given by pulling back all relevant $G$-equivariant structures along $G$-equivariant maps. 

The exact sequence \eqref{Eq:CoverSeq2}, which assumes $G$ is compact, implies that every $\cL \in G\GGrb(M)$ admits a $G$-equivariant connection \cite[Propostion 3.2]{Sti10}.  In other words, the forgetful functor $G\GGrbc(M) \to G\GGrb(M)$ is surjective on objects.  Furthermore, the exact sequence \eqref{Eq:CoverSeq3} implies that any $\cLh \in G\GGrbc(M)$ has a unique equivariant curvature $\curv_G(\cLh) \in \O^3_G(M)$ defined by the property
\begin{equation}\label{Eq:EquivCurv} \pi^* \curv_G (\cLh) = d_G B \in \O_G^3(Y), \end{equation}
and that $d_G \curv_G(\cLh) = 0 \in \O_G^4(M)$ \cite[Lemma 3.3]{Sti10}.  As one would expect, the equivariant curvature depends only on the isomorphism class of $\cLh$.

\begin{prop}\label{Prop:Curvature}If $\cLh_1$, $\cLh_2 \in G\GGrbc(M)$ are isomorphic, then 
\[\curv_G(\cLh_1) = \curv_G(\cLh_2) \in \O^3_G(M).\]
\end{prop}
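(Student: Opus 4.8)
The plan is to produce an explicit relation between the curving forms $B_1$ and $B_2$ using an isomorphism $\cKh = (Z, K, \nabla_K, \alpha)\colon \cLh_1 \to \cLh_2$, and then to use the exactness of the equivariant cover sequence \eqref{Eq:CoverSeq3} to conclude that the two equivariant curvatures agree. The key observation is that the defining condition on a 1-morphism, $\curv_G(\nabla_K) = \zeta^*(B_2 - B_1) \in \O^2_G(Z)$, already expresses the difference $B_2 - B_1$ (pulled back to $Z$) as the equivariant curvature of a $G$-equivariant $S^1$-connection, hence as a $d_G$-closed form up to the simplicial coboundary issues on $Z$.

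First I would apply $d_G$ to the 1-morphism condition, obtaining $d_G \curv_G(\nabla_K) = \zeta^*(d_G B_2 - d_G B_1) \in \O^3_G(Z)$. Since the equivariant curvature of any $G$-invariant $S^1$-connection satisfies $d_G \curv_G(\nabla_K) = 0$ (the equivariant Bianchi identity, which for $S^1$-bundles just says the equivariant curvature is $d_G$-closed), we get $\zeta^*(d_G B_2 - d_G B_1) = 0 \in \O^3_G(Z)$. Next, by \eqref{Eq:EquivCurv}, $\pi_i^* \curv_G(\cLh_i) = d_G B_i \in \O^3_G(Y_i)$, so the forms $d_G B_i$ descend, via the covers $Y_i \to M$, to $\curv_G(\cLh_i) \in \O^3_G(M)$. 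I then need to compare these two descended forms on $M$. The composite cover $Z \to Y_1 \times_M Y_2 \to M$ is itself a $G$-cover of $M$ (composition of $G$-equivariant surjective submersions), and pulling $\curv_G(\cLh_1)$ and $\curv_G(\cLh_2)$ back to $Z$ along this composite gives exactly $\zeta^* \pi_1^* \, d_G B_1$-type expressions, wait — more precisely, pulling $\curv_G(\cLh_i)$ back along $Z \to M$ equals the pullback of $d_G B_i$ from $Y_i$ to $Z$, and these two agree on $Z$ by the previous step. Finally, since $Z \to M$ is a $G$-cover, the map $\pi^*\colon \O^3_G(M) \to \O^3_G(Z)$ is injective by the exactness of \eqref{Eq:CoverSeq3} (the sequence begins $0 \to \O^k_G(M) \xrightarrow{\pi^*} \O^k_G(Z)$), so $\curv_G(\cLh_1) = \curv_G(\cLh_2) \in \O^3_G(M)$.

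The main obstacle, such as it is, lies in carefully tracking which cover one is pulling back along and verifying that $\zeta^*(d_G B_i)$, interpreted through the maps $Z \to Y_1\times_M Y_2 \xrightarrow{\pr_i} Y_i$, really does coincide with the pullback of $\curv_G(\cLh_i)$ along the single composite cover $Z \to M$. This is a commuting-diagram bookkeeping point: it follows because $Y_i \to M$ factors the composite and $\curv_G(\cLh_i)$ is \emph{defined} by $\pi_i^*\curv_G(\cLh_i) = d_G B_i$, so naturality of pullback does the rest. One should also note that the existence of an isomorphism $\cLh_1 \simeq \cLh_2$ guarantees a 1-morphism $\cKh$ exists with the stated curvature constraint, which is immediate from Definition \ref{Defn:1morph}; no choice of 2-morphism is needed. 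Everything else is the (routine) equivariant Bianchi identity for $S^1$-connections and the injectivity half of \eqref{Eq:CoverSeq3}.
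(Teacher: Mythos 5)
Your proposal is correct and follows essentially the same route as the paper: both arguments pull everything back to the cover $Z$, identify $\zeta^*(d_GB_2 - d_GB_1)$ with $d_G\curv_G(\nabla_K)$ via the defining condition on a 1-morphism, invoke equivariant closedness of the equivariant curvature of an $S^1$-connection, and then descend using the injectivity of $\O^3_G(M)\to\O^3_G(Z)$ from the exact sequence \eqref{Eq:CoverSeq3}. The paper simply suppresses the pullback bookkeeping that you spell out.
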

\begin{proof}Assume there is an isomorphism $\cKh \colon \cLh_1 \to \cLh_2$, as described in Definition \ref{Defn:1morph}.  The $G$-equivariant surjective submersions $Z \to Y_i \to M$ induce injective maps $\O_G(M) \to \O_G(Y_i) \to \O_G(Z)$, which we suppress in the following calculation:
\[ \curv_G(\cLh_2) - \curv_G(\cLh_1) = d_G(B_2-B_1) = d_G \curv_G(\nabla^K) = 0 \in \O^3_G(Z).\]
The first equality comes from the definition of equivariant gerbe curvature in \eqref{Eq:EquivCurv}. The second equality is in Definition \ref{Defn:1morph} of 1-morphisms.  The last equality follows because the equariant curvature of an equivariant $S^1$-bundle connection is equivariantly closed.
\end{proof}

There is a canonical trivial gerbe $\cI \in G\GGrb(M)$ given by $Y = M$, $L=1$, and $\mu \iso \Id$ under the canonical identification $1\otimes 1 \iso 1$. Using the same $G$-cover $\Id\colon M\to M$, any $B \in \O^2_G(M)$ satisfies $\delta B = 0 = \curv_G(1)$, where $1 \in G\GBunc(M)$ denotes the trivial bundle with trivial connection.  Hence, any $B \in \O^2_G(M)$ defines $\cIh_B \= (M, 1, B, \Id) \in G\GGrbc(M)$ with curvature $\curv_G(\cIh_B) = d_G B \in \O^3_G(M)$ and with underlying topological gerbe $\cI$.  The trivial gerbe with trivial connection is then denoted $\cIh_0 \in G\GGrbc(M)$.

The natural Picard structure on the category of complex lines, given by the tensor product and the dual, naturally makes the category of $S^1$-bundles into a Picard groupoid and the category of gerbes into a Picard 2-groupoid.  These structures naturally carry over to the equivariant setting, and the same proofs that Waldorf gives in \cite{Waldorf-More} show that $G\GGrbc(M)$ is a symmetric monoidal 2-category in which all 1- and 2-morphisms are invertible, and all objects are invertible under the tensor product (symmetric monoidal) structure.  The curvature 3-forms are additive with respect to tensor product,
\[ \curv_G(\cLh_1\otimes \cLh_2) = \curv_G(\cLh_1) + \curv_G(\cLh_2).\]
The tensor units are $\cIh_0 \in G\GGrbc(M)$ and $\cI \in G\GGrb(M)$.  In particular, there is a natural adjunction
\begin{equation}\label{Eq:Adjunction} G\GGrbc(\cLh_1, \cLh_2) \xrightarrow{\iso} G\GGrbc(\cIh_0, \cLh_1^{-1} \otimes \cLh_2) \end{equation}
and a canonical trivialization $\cIh_0 \xrightarrow{\iso} \cLh^{-1} \otimes \cLh$, which gives the natural equivalence of groupoids 
\begin{equation}\label{Eq:Automorphisms} \Aut(\cLh) = G\GGrbc(\cLh, \cLh) \iso G\GGrbc(\cIh_0, \cLh^{-1} \otimes \cLh) \iso  G\GGrbc(\cIh_0, \cIh_0) = \Aut(\cIh_0).\end{equation}


\subsection{Fiber product morphisms}\label{Subsec:GrbProps}

For $\widehat{\cL}_1$, $\widehat{\cL}_2\in G\GGrbc(M)$, we consider the subgroupoid of {\it fiber product morphisms} $G\GGrbc^{\FP}(\cLh_1, \cLh_2) \subset G\GGrbc(\cLh_1, \cLh_2)$.  Objects are 1-morphisms $\cKh = (Z,K,\nabla_K,\alpha)$ whose equivariant surjective submersion $\zeta \colon Z \to Y_1 \times_M Y_2$ is the identity; similarly, morphisms are $\cJh = (W,\beta)$ with $W\to Y_1\times_M Y_2$ the identity.  Ignoring differential data, the same conditions also define $G\GGrb^{\FP}(\cL_1,\cL_2) \subset G\GGrb(\cL_1,\cL_2)$.  These fiber product morphisms, at least in the non-equivariant case, are often called stable isomorphisms \cite{StevensonThesis}.

In \cite[Theorem 2.1]{Waldorf-More} Waldorf shows that the inclusion $\Grbc^{\FP}(\cLh_1, \cLh_2) \into \Grbc(\cLh_1,\cLh_2)$ is an equivalence of categories.  The benefit is immediate: allowing arbitrary covers is a more natural definition of isomorphisms, but fiber product morphisms are easier to work with in practice.  This equivalence of categories extends to the $G$-equivariant case, as stated in Proposition \ref{Prop:StableIso} below, and the proof is the same as Waldorf's argument in \cite{Waldorf-More}.  We include some of these details, however, since the reader could potentially mistranslate one of Waldorf's arguments to the equivariant setting, as explained below.

Let $Z \xrightarrow{\zeta} M$ be a $G$-cover. Define the category $G\GBunc^{\desc}(\zeta)$ as follows. Objects are triples $(K,\nabla,\mu)$, where $(K,\nabla) \in G\GBunc(Z)$ and $\mu:\zeta_1^*(K,\nabla) \ra \zeta^*_2(K,\nabla)$ is an isomorphism over $Z^{[2]}$ satisfying the cocycle identity $\zeta^*_{13}\mu = \zeta^*_{23}\mu \o \zeta^*_{12} \mu$ on $Z^{[3]}$. Morphisms from $(K,\nabla,\mu)$ to $(K',\nabla',\mu')$ are isomorphisms $\al:(K,\nabla)\ra (K',\nabla')$ such that the following diagram commutes. \beq\label{EQN.Decent.Category.Morphism.Condition} \xymatrix{
\zeta_1^*(K,\nabla) \ar[r]^{\zeta_1^*\al} \ar[d]^{\mu} & \zeta_1^*(K',\nabla') \ar[d]^{\mu'}\\
\zeta_2^*(K,\nabla) \ar[r]^{\zeta_2^*\al}  & \zeta_2^*(K',\nabla')
}\eeq

The following Lemma \ref{Lem:Descent} is well-known in the non-equivariant case of principal bundles considered without connections; see Brylinski \cite[p.187]{Bry93}.  The version in  \cite{Waldorf-More} is not proven directly, but instead Waldorf utilizes the well-known fact that $S^1$-bundles with connection form a stack on the site of manifolds, something that $G\GBunc(-)$ is not.  The descent bundle, however, can be defined as a limit, and it will be $G$-equivariant if the original diagram was in $G\GMan$.  Since we could not find a reference for the equivariant version of descent, we include a full proof.

\begin{lemma}[Descent for equivariant bundles]\label{Lem:Descent} Let $Z \xrightarrow{\zeta} M$ be a $G$-cover. The functor $$\zeta^*:G\GBunc(M) \ra G\GBunc^{\desc}(\zeta)$$ is an equivalence of groupoids.
\end{lemma}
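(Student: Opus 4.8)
The plan is to prove this by exhibiting an explicit quasi-inverse functor, following the standard descent argument but carrying the $G$-action throughout. First I would check that $\zeta^*$ is well-defined: given $(K,\nabla)\in G\GBunc(M)$, the canonical identification $\zeta_1^*\zeta^*(K,\nabla)\cong\zeta_2^*\zeta^*(K,\nabla)$ over $Z^{[2]}$ supplies the descent datum $\mu$, and it automatically satisfies the cocycle condition; everything is $G$-equivariant because pullback along the $G$-equivariant map $\zeta$ preserves $G$-equivariant structures. The functor is faithful and full essentially because \eqref{Eq:CoverSeq2} (applied with the bundle replaced by its difference/$\Hom$ bundle) shows that a morphism on $M$ is the same as a $\delta$-closed morphism on $Z$, i.e.\ a morphism commuting with the descent data; I would spell this out by noting that an isomorphism $K\to K'$ of equivariant bundles with connection descends iff it is compatible with $\mu,\mu'$, which is exactly \eqref{EQN.Decent.Category.Morphism.Condition}.

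The substance is essential surjectivity: given $(K,\nabla,\mu)\in G\GBunc^{\desc}(\zeta)$, I construct the descent bundle $\overline{K}$ as the quotient of $K$ by the equivalence relation generated by $\mu$, equivalently as the limit/equalizer of the diagram $\zeta_1^*K \rightrightarrows \zeta_2^*K$ in the category of $G$-equivariant bundles over $Z^{[2]}$ pulled back appropriately; concretely $\overline{K} = K/\!\sim$ where $k_1\sim k_2$ for $k_i$ in fibers over $z_i$ with $\zeta(z_1)=\zeta(z_2)$ whenever $\mu(z_1,z_2)(k_1)=k_2$. The cocycle identity makes $\sim$ an equivalence relation, and since $\zeta$ is a surjective submersion the quotient is a smooth principal $S^1$-bundle on $M$ with $\zeta^*\overline{K}\cong K$ compatibly with $\mu$. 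The connection $\nabla$ descends because $\mu$ is connection-preserving, so $\nabla$ is constant along the $\sim$-fibers. The key additional point for this paper: because $K$ carries a $G$-action and $\mu$ is $G$-equivariant, the relation $\sim$ is $G$-invariant, so $G$ acts on $\overline{K}$, covering the given $G$-action on $M$; hence $\overline{K}\in G\GBunc(M)$. Here one uses that $Z\to M$ is a $G$-cover so the quotient construction is performed $G$-equivariantly, which is the precise place Waldorf's stack-theoretic shortcut does not apply and one must argue directly, as flagged before the lemma.

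The main obstacle is verifying that the descent quotient $\overline{K}$ is genuinely a smooth manifold and a smooth principal bundle — that the quotient by $\sim$ inherits a smooth structure. The clean way around this is to observe that one may always reduce to the case of an \emph{open cover} refining $\zeta$: a surjective submersion $Z\to M$ admits local sections, so choosing an open cover $\{U_\alpha\}$ of $M$ with sections $s_\alpha\colon U_\alpha\to Z$ reduces $(K,\nabla,\mu)$ to transition-function data $g_{\alpha\beta} = s_\beta^*\cdot\mu\cdot s_\alpha^*$ on $U_\alpha\cap U_\beta$ satisfying the cocycle identity, from which $\overline{K}$ is built by the usual clutching construction; equivariance of the $g_{\alpha\beta}$ (after averaging the cover to be $G$-invariant, or working $G$-locally) yields a $G$-equivariant bundle. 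One then checks that this $\overline{K}$ is independent of choices up to canonical isomorphism and that the assignment is functorial, giving the quasi-inverse. Finally I would record that the two composites are naturally isomorphic to the identities — $\zeta^*\overline{K}\cong K$ as part of the construction, and $\overline{\zeta^*(K,\nabla)}\cong (K,\nabla)$ via the evident map — completing the proof that $\zeta^*$ is an equivalence of groupoids.
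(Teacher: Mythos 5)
Your proposal matches the paper's proof in all essentials: essential surjectivity is obtained via the explicit quotient $K/\sim$ induced by $\mu$ (with smoothness supplied by local sections of $\zeta$, the connection descending because $\mu$ is connection-preserving, and the $G$-action inherited directly because $\sim$ is $G$-invariant), and fullness via descent of morphisms compatible with the descent data. The only cosmetic differences are that the paper justifies morphism descent by citing the non-equivariant case in Brylinski rather than the sequence \eqref{Eq:CoverSeq2} (which concerns invariant differential forms, not bundle morphisms), and that no $G$-invariant cover or averaging is needed since the $G$-action is defined directly on the quotient, as you also note.
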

\begin{proof} We shall prove that the functor $\zeta^*$ is essentially surjective and fully faithful. We verify essential surjectivity by taking an object in the descent groupoid and defining a $G$-equivariant $S^1$-bundle with connection over $M$ through a quotient construction using the descent isomorphism. For any $(K,\te, \mu)\in G\GBunc^{\desc}(\zeta)$ we define a relation $\sim$ on $K$ as follows: For $k\in K_{z_1}$ and $k'\in K_{z_2}$,  $k\sim k'$ if $\mu((z_1,z_2),k)=((z_1,z_2),k')$. Since $\mu$ is an isomorphism of $S^1$-bundles satisfying the cocycle identity, the relation $\sim$ is an equivalence relation. We construct a $G$-equivariant $S^1$-bundle $S$ over $M$ as follows. Define $S:=K/\sim$ and topologize it by the map $K\ra K/\sim$. The projection map $\pi:S\ra M$ is defined by $[k]\mapsto \zeta\circ\pi_K(k)$ for any $[k]\in S$, which is clearly well-defined. Here $\pi_K$ is the projection map $K\ra Z$. For any $x\in M$, $S^1$ acts on $S_x:=\pi^{-1}(x)$ by $[k]\cdot r:=[k\cdot r]$ for all $[k]\in S_x$ and $r\in S^1$, and the action is free and transitive, since $S^1$ acts on $K$ as such. To show $S$ is locally trivial, we take any $x\in M$ and an open neighborhood $U$ of $x$ such that $\zeta$ splits locally by a smooth (non-equivariant) map $\si:U\ra Z$ and the image of $\si$ is properly contained in an open neighborhood $V$ of $\si(x)$ over which $K$ is locally trivial by a local trivialization $\vph:K|_{V}\ra V\times S^1$. We define a map $S|_{U}\ra U\times S^1$ by $[k]\mapsto (\zeta\circ\pi_K(k),\vph|_{V\cap \si(U)}(k))$. This map is well-defined because, from the construction of $\si$, there is only one representative $k$ of $[k]$ that has $\pi_K(k)\in \si(U)$, and it is clearly $S^1$-equivariant. We now endow a $G$-action on $S$ by $g\cdot [k]:=[g\cdot k]$ making $S\in G\GBun(M)$ since $K$, $\mu$ and $\zeta$ are $G$-equivariant, and the action commutes with the right $S^1$-action on $S$.

Now we define a $G$-invariant connection $\Te$ on $S$. Let $v_1\in T_{k_1}K$ and $v_2\in T_{k_2}K$, where $\pi_K(k_i)=z_i$ for $i=1,2$. We define a relation $v_1\sim_* v_2$ if there is a path $(\al(t),\ga(t))$ in $\zeta_1^*K$ and a path $(\be(t),\de(t))$ in $\zeta_2^*K$ satisfying $\al(0)=(z_1,z_2)=\be(0)$, $\ga(0)=k_1$, $\ga'(0)=v_1$, $\de(0)=k_2$ and $\de'(0)=v_2$ such that $\mu_*(\ga'(0))=\de'(0)$. Here $\mu_*$ denotes the derivative of $\mu$. The relation $\sim_*$ is an equivalence relation since $\sim$ is an equivalence relation and the push-forward is a functor. We define $\Te\in\Om^1(S)$ by $\Te_{[k]}([v]):=\te_{k}(v)$. This is well-defined: For any $v_1\sim_* v_2$ and $k_1\sim k_2$, we see that \beqs
\Te_{[k_2]}([v_2])=\te_{k_2}(v_2)=\mu^*\te_{k_1}(v_1)=\te_{k_1}(v_1)=\Te_{[k_1]}([v_1]).
\eeqs  The third equality follows from the fact that $\mu$ is a connection preserving isomorphism. It is easy to see that $R_{r}^*\Te=\Ad_{r^{-1}}\Te$ and $\Te(\rho(r))=r$ for $r\in \R$. Here $\rho_{[k]}(r)=\big([k]\cdot\exp(t\cdot r)\big)'(0)$. We also note that $\Te$ is $G$-invariant: For any $g\in G$ and $v\in T_k K$ such that $v=\ga'(0)$, \beqs (L_g^*\Te)_{[k]}([v])&=\Te_{[g\cdot k]}\left(\left.\frac{d}{dt}\right|_{t=0} g\cdot [\ga(t)]\right)=\Te_{[g\cdot k]}([{L_g}_*(v)])\\&=\te_{g\cdot k}({L_g}_*(v))=(g^*\te)_{k}(v)\srl{*}=\te_{k}(v)=\Te_{[k]}([v]).
\eeqs The equality $*$ follows from the $G$-invariance of the connection form $\te$. 

We verify that $\zeta^*(S,\Te)$ is isomorphic to $(K,\te)$. First we note that the map \beqs
K&\srl{\vph}\ra \zeta^*S\\
k&\mapsto (\pi_K(k),[k])
\eeqs is an isomorphism of $G$-equivariant $S^1$-bundles. Now we see that $\vph^*\wtl\zeta^*\Te=\te$. Here $\wtl\zeta$ is a $G$-equivariant $S^1$-bundle map $\zeta^*S\ra S$ covering $\zeta$. For $k$ and $v$ as above, \beqs (\vph^*\wtl\zeta^*\Te)_{k}(v)=(\wtl\zeta^*\Te)_{(\pi_K(k),[k])}([v])=\Te_{[k]}([v])=\te_{k}v.
\eeqs It is readily seen that the diagram as in \eqref{EQN.Decent.Category.Morphism.Condition} is commutative. Therefore the functor $\zeta^*$ is essentially surjective.

The functor $\zeta^*$ is obviously faithful. We show that it is full. For any $\al:((\zeta^*S_1,\wtl\zeta_{S_1}^*\te_1),1 ) \ra ((\zeta^*S_2,\wtl\zeta_{S_2}^*\te_2),1)$ where $\wtl\zeta_{S_i}^*:\zeta^*S_i\ra S_i$ is the map covering $\zeta$, there exists some $\be \in\Bun(M)(S_1,S_2)$ such that $\zeta^*\be=\al$ because of the above-mentioned special case in \cite{Bry93}. Since $\al$ is $G$-equivariant, it follows that $\be\in G\GBun(M)(S_1,S_2)$. From $\zeta$ being a surjective submersion, for any $v\in T_s S_1$ with $s\in S_1$, there exists $\wtl v\in T_{(z,s)} \zeta^*S_1$ with $\zeta(z)=\pi_{S_1}(s)$ such that $(\wtl\zeta_{S_1})_*\wtl v=v$. Hence we see that $
{\te_1}_s(v)=(\wtl\zeta_{S_1}^*\te_1)_{(z,s)}(\wtl v)=(\al^*\wtl\zeta_{S_2}^*\te_2)_{(z,s)}(\wtl v)=(\wtl\zeta_{S_1}^*\be^*\te_2)_{(z,s)}(\wtl v)=(\be^*\te_2)_s(v)
$ and thus $\be\in G\GBunc(M)\big((S_1,\te_1),(S_2,\te_2)\big)$.
\end{proof}

We need two lemmas which are enhancements of \cite[Lemmas 1.3 and 1.5]{Waldorf-More} in the $G$-equivariant case with connection.  Their proofs follow line-by-line from Waldorf.

\begin{lemma}\label{LEM.Waldorf.Lemma1.3.Geq.conn} Let $\widehat{\cL}=(Y, L, \na, B, \mu)\in G\GGrbc(M)$. There is a canonical isomorphism  $t_\mu:\Delta^*L\ra 1$ of $G$-equivariant $S^1$-bundles over $Y$ satisfying $\pi_1^*t_\mu\tsr \Id =\Delta^*_{112}\mu$ and $\Id \tsr\pi_2^*t_\mu=\Delta_{122}^*\mu$. Here $\Delta:Y\ra Y\times_M Y$ is the diagonal map, and $\Delta_{112},\Delta_{122}:Y^{[2]}\ra Y^{[3]}$ duplicate the first and the second factors, respectively. 
\end{lemma}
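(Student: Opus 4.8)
The statement is a $G$-equivariant, connection-preserving upgrade of \cite[Lemma 1.3]{Waldorf-More}, so the strategy is to construct the isomorphism $t_\mu \colon \Delta^* L \to 1$ and verify its two defining properties by pulling the bundle gerbe multiplication $\mu$ back along suitable maps $Y \to Y^{[3]}$, then to check that the resulting bundle isomorphism is $G$-equivariant and connection-preserving — facts which will follow essentially for free from the equivariance of $\mu$ and the curving constraint $\curv_G(\nabla) = \pi_2^* B - \pi_1^* B$.

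First I would recall the maps in play. The diagonal $\Delta \colon Y \to Y^{[2]}$ sends $y \mapsto (y,y)$, and $\Delta_{112}, \Delta_{122} \colon Y^{[2]} \to Y^{[3]}$ send $(y_1,y_2)$ to $(y_1,y_1,y_2)$ and $(y_1,y_2,y_2)$ respectively; composing, $\Delta_{112} \circ \Delta = \Delta_{122}\circ\Delta = \Delta_3 \colon Y \to Y^{[3]}$ is the triple diagonal. Pulling $\mu \colon \pi_{12}^*(L,\nabla) \otimes \pi_{23}^*(L,\nabla) \to \pi_{13}^*(L,\nabla)$ back along $\Delta_{112}$ gives an isomorphism $\Delta_{112}^*\mu \colon \pi_1^*(\Delta^*L) \otimes (\Delta^*L) \to \pi_1^*(\Delta^*L)$ over $Y^{[2]}$ (using $\pi_{12}\circ\Delta_{112} = \pi_1$, $\pi_{23}\circ\Delta_{112} = \pi_{13}\circ\Delta_{112} = \Delta\circ\pi_1$ on $Y^{[2]}$, where I write $\pi_1$ also for the projection $Y^{[2]}\to Y$). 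Tensoring with $\pi_1^*(\Delta^*L)^{-1}$ turns $\Delta_{112}^*\mu$ into an isomorphism $\Delta^*L \to 1$ over $Y^{[2]}$; but the source and target are pulled back from $Y$ along $\pi_1$, so by descent along the surjective submersion $\pi_1 \colon Y^{[2]} \to Y$ (the edge case of Lemma \ref{Lem:Descent}, or more simply just the uniqueness clause) this isomorphism descends to a canonical $t_\mu \colon \Delta^*L \to 1$ over $Y$. The identity $\pi_1^* t_\mu \otimes \Id = \Delta_{112}^*\mu$ holds by construction, and $\Id \otimes \pi_2^* t_\mu = \Delta_{122}^*\mu$ is the symmetric statement obtained by instead pulling $\mu$ back along $\Delta_{122}$; the compatibility of the two descriptions (so that the \emph{same} $t_\mu$ works for both) is exactly the associativity pentagon of $\mu$ restricted along an appropriate map $Y^{[2]}\to Y^{[4]}$ — this is the one spot where a genuine computation with the associativity diagram is needed, and it is identical to Waldorf's.

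The equivariant and connective enhancements are then bookkeeping. Since $\mu$ is by definition a morphism in $G\GBunc[S^1](Y^{[3]})$, each pullback $\Delta_{112}^*\mu$, $\Delta_{122}^*\mu$ is $G$-equivariant and connection-preserving; the tensor-and-descend operations preserve both properties (descent along a $G$-equivariant submersion of a $G$-equivariant, $G$-invariant-connection bundle is again such, by Lemma \ref{Lem:Descent}); hence $t_\mu \in G\GBunc[S^1](Y)(\Delta^*L, 1)$. One should also observe that the two bundles are honestly isomorphic as objects \emph{with connection} — i.e.\ that $\Delta^*\nabla$ is the trivial connection — but this is automatic from the curving condition, since $\curv_G(\Delta^*\nabla) = \Delta^*(\pi_2^*B - \pi_1^*B) = B - B = 0 \in \O^2_G(Y)$, and an $S^1$-connection of vanishing curvature on a bundle admitting a flat (here, the trivial) structure is gauge-trivial; alternatively, this is subsumed in the statement that $t_\mu$ is connection-preserving. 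I expect no real obstacle: the only nonroutine step is transcribing Waldorf's pentagon manipulation that identifies the two formulas for $t_\mu$, and the ``main obstacle,'' such as it is, is simply being careful that the equivariant descent of Lemma \ref{Lem:Descent} applies to the degenerate cover $\pi_1 \colon Y^{[2]} \to Y$ — which it does, as $\pi_1$ is a $G$-equivariant surjective submersion with $(Y^{[2]})^{[k]}_{/Y} \cong Y^{[k+1]}$.
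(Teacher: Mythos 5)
Your overall strategy is exactly what the paper intends: the paper gives no proof of this lemma beyond the remark that it ``follows line-by-line from Waldorf,'' and Waldorf's argument is indeed to extract $t_\mu$ from pullbacks of $\mu$ along degeneracy maps and to check the two identities via associativity, with equivariance and connection-preservation coming for free because $\mu$ is already a morphism in $G\GBunc(Y^{[3]})$. However, your execution has two concrete problems. First, the pullback identifications are wrong: since $\Delta_{112}(y_1,y_2)=(y_1,y_1,y_2)$, one has $\pi_{12}\circ\Delta_{112}=\Delta\circ\pi_1$ but $\pi_{23}\circ\Delta_{112}=\pi_{13}\circ\Delta_{112}=\Id_{Y^{[2]}}$, so $\Delta_{112}^*\mu$ is a map $\pi_1^*\Delta^*L\otimes L\to L$, not $\pi_1^*(\Delta^*L)\otimes(\Delta^*L)\to\pi_1^*(\Delta^*L)$ as you wrote. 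Your version does not even type-check against the identity $\pi_1^*t_\mu\otimes\Id=\Delta_{112}^*\mu$ being proved, in which the second tensor factor must be $L$ itself (untouched by $t_\mu$).

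Second, the descent step is both avoidable and, as written, unjustified. After cancelling the $L$ factor you do obtain a morphism $\pi_1^*\Delta^*L\to 1$ between bundles pulled back along $\pi_1\colon Y^{[2]}\to Y$, but a morphism between pullbacks descends only if it commutes with the canonical descent isomorphisms (the square \eqref{EQN.Decent.Category.Morphism.Condition}), i.e.\ only if it is constant along the fibers of $\pi_1$; the ``uniqueness clause'' of Lemma \ref{Lem:Descent} gives faithfulness, not existence of the descended morphism, and the required compatibility is itself an application of the associativity of $\mu$ --- so associativity is needed here too, not only to reconcile the $\Delta_{112}$- and $\Delta_{122}$-descriptions as you claim. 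The cleaner route (Waldorf's) is to restrict along the diagonal: $\Delta_{112}\circ\Delta=\Delta_{122}\circ\Delta=\Delta_{111}$, so $\Delta_{111}^*\mu\colon\Delta^*L\otimes\Delta^*L\to\Delta^*L$ lives already over $Y$, and $t_\mu$ is defined by cancelling one factor of $\Delta^*L$; the two identities over $Y^{[2]}$ are then deduced from the associativity square pulled back along suitable maps $Y^{[2]}\to Y^{[4]}$, and no descent is needed. Finally, your parenthetical claim that a connection of vanishing (equivariant) curvature on a topologically trivializable bundle is gauge-trivial is false (flat connections can have holonomy); it is not load-bearing, since the correct statement is simply that $t_\mu$, being built from the connection-preserving $\mu$ by pullback and tensoring, preserves connections.
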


\begin{lemma}\label{LEM.Waldorf.Lemma1.5.Geq.conn}
For any $\widehat{\cK}=(Z\srl\zeta\ra P:=Y_1\times_M Y_2, K, \na_K,  \al)\in G\GGrbc(M)\big(\widehat{\cL}_1,\widehat{\cL}_2\big)$, there exists a canonical isomorphism $d_{\widehat\cK}:\zeta_1^*(K,\na_K)\ra \zeta_2^*(K,\na_K)$ such that $\big((K,\na_K), d_{\widehat{\cK}}\big)\in G\GBunc^{\desc}(\zeta)$ and the diagram \[\xymatrix{
(L_1,\na_1)\tsr \zeta_3^*(K,\na_K) \ar[r]^{\zeta_{13}^*\al} \ar[d]_{\Id\tsr \zeta_{34}^*d_{\widehat{\cK}}} & \zeta_1^*(K,\na_K)\tsr (L_2,\na_2) \ar[d]^{\zeta_{12}^*d_{\widehat{\cK}}\tsr\Id}\\
(L_1,\na_1)\tsr \zeta_4^*(K,\na_K) \ar[r]^{\zeta_{24}^*\al} & \zeta_2^*(K,\na_K)\tsr (L_2,\na_2)
}\] is commutative over $(Z\times_P Z)\times_M(Z\times_P Z)$.
\end{lemma}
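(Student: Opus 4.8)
### Proof proposal for Lemma \ref{LEM.Waldorf.Lemma1.5.Geq.conn}

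The plan is to mimic Waldorf's construction of the descent isomorphism $d_{\widehat\cK}$ from \cite[Lemma 1.5]{Waldorf-More} verbatim, checking at each step that every bundle, connection and isomorphism produced is $G$-equivariant because it is built by pullback and tensor product from $G$-equivariant data. First I would observe that the fiber product $Z\times_P Z$ over $P = Y_1\times_M Y_2$ maps $G$-equivariantly to $Y_1^{[2]}$ and to $Y_2^{[2]}$, so both $L_1$ and $L_2$ (with connections) pull back to $Z\times_P Z$; the isomorphism $\alpha$ lives precisely there. The idea is to define $d_{\widehat\cK}\colon \zeta_1^*(K,\na_K) \to \zeta_2^*(K,\na_K)$ over $Z^{[2]} = Z\times_P Z$ by composing $\alpha$ (restricted along a diagonal-type map that identifies the two $Y_i$-components) with the canonical trivializations $t_{\mu_i}\colon \Delta^* L_i \to 1$ furnished by Lemma \ref{LEM.Waldorf.Lemma1.3.Geq.conn}. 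Concretely, on $Z\times_P Z$ the pullbacks of $L_1$ and $L_2$ along the appropriate maps that duplicate a factor become canonically trivial via $t_{\mu_1}$ and $t_{\mu_2}$, and feeding these trivializations into $\alpha$ collapses it to an isomorphism between the two pullbacks of $K$. Since $t_{\mu_i}$, $\alpha$, and all structure maps are morphisms of $G$-equivariant $S^1$-bundles with $G$-invariant connection, the resulting $d_{\widehat\cK}$ is one as well.

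Next I would verify the cocycle identity $\zeta_{13}^* d_{\widehat\cK} = \zeta_{23}^* d_{\widehat\cK}\circ \zeta_{12}^* d_{\widehat\cK}$ on $Z^{[3]} = Z\times_P Z\times_P Z$, which places $\big((K,\na_K), d_{\widehat\cK}\big)$ in $G\GBunc^{\desc}(\zeta)$. This is exactly Waldorf's computation: it reduces to the associativity constraints on $\mu_1$ and $\mu_2$ (equivalently, the compatibility of $t_{\mu_i}$ with $\mu_i$ recorded in Lemma \ref{LEM.Waldorf.Lemma1.3.Geq.conn}) together with the pentagon-type compatibility of $\alpha$ with $\mu_1,\mu_2$ from Definition \ref{Defn:1morph}. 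Nothing in this diagram chase sees the connections beyond the fact that every arrow preserves them, and nothing sees the $G$-action beyond the fact that every arrow is $G$-equivariant; hence Waldorf's proof applies unchanged. Finally, the commutativity of the displayed square over $(Z\times_P Z)\times_M (Z\times_P Z)$ asserting compatibility of $d_{\widehat\cK}$ with $\alpha$ is, again, a direct consequence of the same associativity/compatibility diagrams, reorganized: it expresses that transporting $\alpha$ across the two "inner" edges via $d_{\widehat\cK}$ agrees with transporting it across the two "outer" edges, which is precisely the coherence built into the defining hexagon for $\alpha$.

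The only genuine care required — and the point the excerpt flags as a place one could "mistranslate one of Waldorf's arguments" — is that $G\GBunc(-)$ is \emph{not} a stack, so one may not invoke descent for $S^1$-bundles with connection as a black box the way Waldorf does implicitly; instead one must know the descended object exists and is $G$-equivariant. But this has already been supplied by Lemma \ref{Lem:Descent}, so here it is enough to exhibit $d_{\widehat\cK}$ explicitly as above and check the two diagrams commute by hand. I expect the main obstacle to be purely bookkeeping: keeping the many pullback maps $\zeta_{i}$, $\zeta_{ij}$, $\Delta_{ijk}$ straight across $Z^{[2]}$, $Z^{[3]}$, and the double fiber products, and confirming that each face of the relevant commuting diagram in Waldorf really does consist of $G$-equivariant, connection-preserving arrows. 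Since the paper has already committed to the principle that Waldorf's arguments generalize once one has equivariant descent (Lemma \ref{Lem:Descent}) and the equivariant trivialization (Lemma \ref{LEM.Waldorf.Lemma1.3.Geq.conn}), I would state the construction of $d_{\widehat\cK}$ precisely, note that the two commutativity claims follow from the associativity of $\mu_1,\mu_2$ and the $\mu_i$-compatibility of $\alpha$ exactly as in \cite[Lemma 1.5]{Waldorf-More}, and refer the reader there for the diagram chase.
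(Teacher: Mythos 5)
Your proposal is correct and matches the paper's approach: the paper simply states that the proofs of Lemmas \ref{LEM.Waldorf.Lemma1.3.Geq.conn} and \ref{LEM.Waldorf.Lemma1.5.Geq.conn} ``follow line-by-line from Waldorf,'' and your outline is exactly that line-by-line translation --- building $d_{\widehat\cK}$ from $\al$ and the trivializations $t_{\mu_i}$ of Lemma \ref{LEM.Waldorf.Lemma1.3.Geq.conn}, then deducing the cocycle identity and the displayed square from the associativity of $\mu_1,\mu_2$ and the compatibility of $\al$ with them, with $G$-equivariance and connection-preservation automatic since every arrow is built from equivariant, connection-preserving data by pullback and tensor product. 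The only minor quibble is that the paper's warning about ``mistranslating'' Waldorf refers to the use of descent (Lemma \ref{Lem:Descent}) in Proposition \ref{Prop:StableIso}, not to this lemma, but that does not affect your argument.
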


\begin{prop}\label{Prop:StableIso}The natural inclusions 
\begin{align*} G\GGrb^{\FP}(\cL_1, \cL_2) &\xrightarrow{\simeq} G\GGrb(\cL_1, \cL_2), \\ 
G\GGrbc^{\FP}(\cLh_1, \cLh_2) &\xrightarrow{\simeq} G\GGrbc(\cLh_1, \cLh_2), \end{align*}
are equivalences of categories.
\end{prop}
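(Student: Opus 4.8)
The plan is to follow Waldorf's argument for \cite[Theorem 2.1]{Waldorf-More} essentially verbatim, using the equivariant descent statement (Lemma \ref{Lem:Descent}) and the two equivariant enhancements (Lemmas \ref{LEM.Waldorf.Lemma1.3.Geq.conn} and \ref{LEM.Waldorf.Lemma1.5.Geq.conn}) in place of their non-equivariant counterparts. It suffices to treat the case with connection, i.e.\ the inclusion $G\GGrbc^{\FP}(\cLh_1,\cLh_2) \into G\GGrbc(\cLh_1,\cLh_2)$; the case without connection is obtained by discarding all connections and equivariant 2-forms, and the identical argument goes through (or one simply invokes the connection case and forgets). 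So fix $\cLh_1 = (Y_1,L_1,\nabla_1,B_1,\mu_1)$ and $\cLh_2=(Y_2,L_2,\nabla_2,B_2,\mu_2)$, write $P \= Y_1\times_M Y_2$, and show the inclusion functor is essentially surjective and fully faithful.

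First I would prove essential surjectivity. Let $\cKh = (Z\xrightarrow{\zeta} P, K,\nabla_K,\alpha)$ be an arbitrary $1$-morphism. The point is that $\zeta \colon Z \to P$ is itself a $G$-cover of $P$, so by Lemma \ref{LEM.Waldorf.Lemma1.5.Geq.conn} the pair $\big((K,\nabla_K), d_{\cKh}\big)$ is an object of $G\GBunc^{\desc}(\zeta)$. Applying the equivariant descent equivalence of Lemma \ref{Lem:Descent} (with base $P$ rather than $M$), there is $(K',\nabla_{K'}) \in G\GBunc(P)$ together with an isomorphism $\zeta^*(K',\nabla_{K'}) \iso (K,\nabla_K)$ intertwining the canonical descent datum on $\zeta^*(K',\nabla_{K'})$ with $d_{\cKh}$. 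The curvature condition $\curv_G(\nabla_{K'}) = B_2 - B_1 \in \O^2_G(P)$ holds because it holds after pullback along the $G$-equivariant surjective submersion $\zeta$, and $\zeta^*$ is injective on $\O^2_G$ by \eqref{Eq:CoverSeq3}. One then transports $\alpha$ across this isomorphism: the commuting square in Lemma \ref{LEM.Waldorf.Lemma1.5.Geq.conn} is exactly what guarantees that the resulting isomorphism $\alpha' \colon (L_1,\nabla_1)\otimes \zeta_2^*(K',\nabla_{K'}) \to \zeta_1^*(K',\nabla_{K'})\otimes(L_2,\nabla_2)$ on $Z\times_M Z$ descends to an isomorphism over $P\times_M P$, and compatibility with $\mu_1,\mu_2$ is preserved since it is checked on a cover. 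This produces $\cKh' = (\Id_P, K',\nabla_{K'},\alpha') \in G\GGrbc^{\FP}(\cLh_1,\cLh_2)$ together with a $2$-isomorphism $\cKh' \Rightarrow \cKh$ given by the $G$-cover $Z \xrightarrow{\Id} Z$ sitting over $Z\times_{P} P = Z$, so the inclusion hits every isomorphism class.

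Fullness and faithfulness are more direct. Given $\cKh_1,\cKh_2 \in G\GGrbc^{\FP}(\cLh_1,\cLh_2)$ with the same underlying cover $\Id_P$, a $2$-morphism $\cJh = (W,\beta)$ between them in $G\GGrbc(\cLh_1,\cLh_2)$ consists of a $G$-equivariant surjective submersion $\omega\colon W\to P\times_P P = P$ and an isomorphism $\beta\colon \omega^*(K_1,\nabla_1)\to\omega^*(K_2,\nabla_2)$ of $G$-equivariant bundles on $W$ satisfying the compatibility square with $\alpha_1,\alpha_2$. The compatibility square says precisely that $\beta$ is a morphism in $G\GBunc^{\desc}(\omega)$ from the canonical descent datum of $\omega^*(K_1,\nabla_1)$ to that of $\omega^*(K_2,\nabla_2)$; hence by Lemma \ref{Lem:Descent} (faithfulness and fullness of $\omega^*\colon G\GBunc(P)\to G\GBunc^{\desc}(\omega)$) there is a unique $\bar\beta\colon (K_1,\nabla_1)\to(K_2,\nabla_2)$ on $P$ with $\omega^*\bar\beta = \beta$, and this $\bar\beta$ is automatically compatible with $\alpha_1,\alpha_2$ since that condition pulls back faithfully along the cover $\omega$. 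Thus $(P,\bar\beta)$ is a fiber product $2$-morphism mapping to $\cJh$ (fullness), and the equivalence relation defining $2$-morphisms together with uniqueness of $\bar\beta$ shows any two fiber product $2$-morphisms with the same image in $G\GGrbc(\cLh_1,\cLh_2)$ coincide (faithfulness).

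The main obstacle is purely bookkeeping rather than conceptual: one must be careful that the descent isomorphism produced in essential surjectivity, which lives over $Z$, genuinely patches $\alpha$ over $P\times_M P$ and that all the coherence conditions (associativity with $\mu_1,\mu_2$, the $2$-morphism compatibility squares) are stable under passing between a cover and its descent. This is exactly where the equivariant versions of Waldorf's Lemmas \ref{LEM.Waldorf.Lemma1.3.Geq.conn} and \ref{LEM.Waldorf.Lemma1.5.Geq.conn} do the real work, and since their proofs were noted to follow Waldorf line-by-line, the proposition follows. As Waldorf's argument is already written out in full in \cite[Section 2]{Waldorf-More} and every step uses only operations (pullback, tensor product, descent) that preserve $G$-equivariance, we omit the remaining details.
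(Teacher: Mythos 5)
Your proposal is correct and follows essentially the same route as the paper: both establish essential surjectivity by using Lemma \ref{LEM.Waldorf.Lemma1.5.Geq.conn} to exhibit $(K,\nabla_K)$ with its canonical descent datum as an object of $G\GBunc^{\desc}(\zeta)$, descend it and $\alpha$ to $P=Y_1\times_M Y_2$ via Lemma \ref{Lem:Descent}, and then prove fullness by observing that the compatibility of a $2$-morphism with $\alpha_1,\alpha_2$ forces the descent condition over $W\times_P W$, so the bundle morphism descends to $P$. The only cosmetic difference is that the paper phrases the descent of $\alpha$ as applying Lemma \ref{Lem:Descent} to the cover $\zeta^2\colon Z\times_M Z\to P\times_M P$, whereas you describe it as transporting $\alpha$ across the descent isomorphism; these amount to the same step.
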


\begin{proof}
We verify that the functor defined on $G\GGrbc^{\FP}(\cLh_1, \cLh_2)$ is essentially surjective. Given any $\widehat{\cK}=(Z, K,\na_K, \al)\in G\GGrbc(M)\big(\widehat{\cL}_1,\widehat{\cL}_2\big)$ with $G$-cover $Z\srl\zeta\ra P:=Y_1\times_M Y_2$, by Lemma \ref{LEM.Waldorf.Lemma1.5.Geq.conn}, it follows that $\al$ is a morphism of the category $G\GBunc^{\desc}(\zeta^2)$ where $\zeta^2:Z\times_M Z\ra P\times_M P$ is a $G$-cover. By Lemma \ref{Lem:Descent} there exists $(S,\na_S)\in G\GBunc(P)$ such that $\be:\zeta^*(S,\na_S)\srl{\isom}\ra (K,\na_K)$ and since $\al$ is a morphism of $G\GBunc^{\desc}(\zeta^2)$, by the same lemma there exists a morphism $\si$ in $G\GBunc(P^{[2]})$ that is pulled back to $\al$. Now we define a $1$-morphism $\mathcal{S}_{\widehat{\cK}}\in G\GGrbc(M)\big(\widehat{\cL}_1,\widehat{\cL}_2\big)$ by $(P, S,\na_S,\si)$ with $G$-cover $\text{id}_P$. Then there is a $2$-morphism $\mathcal{S}_{\widehat{\cK}}\Rightarrow \widehat{\cK}$ defined by $(Z, \be)$ with $G$-cover $\text{id}_Z$ under the identification  $Z\times_P P\isom Z$. The inclusion functor is clearly faithful, and we show that it is full. Let $\be:\widehat{\cK}_1\Rightarrow\widehat{\cK}_2$ be a $2$-morphism $(W,\be_W)$ with $G$-cover $W\srl{\om}\ra P$. Since $Z\times_P Z\embed Z\times_M Z$ is $G$-equivariant, the compatibility of $\om$ with $\al_i$ of $\widehat{\cK}_i$, over $Z\times_M Z$ implies $\om_1^*\be_W=\om_2^*\be_W$ over $Z\times_P Z$, and hence $\be_W$ is a morphism of $G\GBunc^{\desc}(\om)$. By Lemma \ref{Lem:Descent} there exists a morphism $\be_P$ in $G\GBunc(P)$ that is pulled back to $\be_W$. Accordingly we have a $2$-morphism defined by the pair $(P,\be_P)$ with $G$-cover $\text{id}_P$ which is equivalent to the pair defining $\be$.
\end{proof}


\subsection{Trivializations and isomorphism classes}\label{Subsec:IsoClasses}
Proposition \ref{Prop:StableIso} greatly simplifies the types of morphisms one must consider.  First, the natural equivalence $G\GGrb(\cL_1, \cL_2) \iso G\GGrb(\cI, \cL_1^{-1} \otimes \cL_2)$, given by the Picard structure, tells us that to understand isomorphisms in $G\GGrb(M)$ it suffices to understand isomorphisms $\cI \to \cL$, and thus it suffices to understand the fiber product trivializations $G\GGrb^{\FP}(\cI, \cL)$.

Let us unpackage the data in an arbitrary $\cK = (Z, K, \alpha) \in G\GGrb^{\FP}(\cI, \cL)$.  We may assume the $G$-cover $\zeta \colon Z \to M\times_M Y \iso Y$ is the identity $\Id \colon Y \to Y$.  Continuing, we have a bundle $K \in G\GBun(Y)$ with isomorphism $\pi_2^* K \xrightarrow{\alpha} \pi_1^* K \otimes L$ in $G\GBun(Y^{[2]})$.  Analogous to the description of $\mu$ in Remark \ref{Rem:EquivalentDefn}, the isomorphism $\alpha$ is equivalent to an isomorphism $\upalpha \colon \delta K \to L$ .  The condition that $\alpha$ is compatible with $\mu$ now means that
\[ \begin{tikzcd}{ \delta (\delta K )} \ar[r, "\delta \upalpha"] \ar[rr, bend right, "\iso"] & \delta L \ar[r, "\upmu"  ]  &  1
\end{tikzcd} \]
commutes; i.e. $\upmu \o (\delta \upalpha)$ is the canonical isomorphism $\delta(\delta K) \xrightarrow{\iso} 1$  in $G\GBun(Y^{[3]})$.  A 2-morphism between two such trivializations is an isomorphism $K \to K'$ in $G\GBun(Y)$ compatible with $\alpha, \alpha'$.

Define $G\GGrbtrivc(M)$ as the full subcategory of $G\GGrbc(M)$ consisting of objects $\cLh$ such that the underlying topological gerbe is isomorphic to the trivial gerbe $\cI \iso \cL \in G\GGrb(M)$.  This is the analog of a topologically trivializable $S^1$-bundle whose connection has non-trivial holonomy.

Previously, we discussed how any $B \in \O^2_G(M)$ determines $\cIh_B \in G\GGrbtrivc(M)$ given by the trivial gerbe but with curving 2-form $B$.  We now show that this map is surjective on isomorphism classes of objects.  Note that any $(K,\nabla_K) \in G\GBunc(M)$ naturally determines a 1-morphism $\cIh_B \to \cIh_{B + \curv_G(\nabla_K)}$.  In this way, the groupoid $G\GBunc(M)$ acts on $\O^2_G(M)$, and we denote the induced action 2-groupoid as $\O^2_G(M) \sslash G\GBunc(M)$.  Explicitly, $\O^2_G(M)$ is the set of objects, and the groupoid of morphisms from $B_1$ to $B_2$ is the subgroupoid of $G\GBunc(M)$ given by $(K,\nabla)$ with $\curv_G(\nabla) = B_2-B_1$.

We first point out a basic lemma whose proof is a simple homework problem.

\begin{lemma}\label{Lem:AffineExact}Suppose $G_1 \to G_2 \to G_3$ is an exact sequence of groups, and 
\[ \begin{tikzcd}[row sep=tiny] A_1 \ar[r, "f_1" '] & A_2 \ar[r, "f_2" '] & A_3 \\
G_1 \ar[r] \ar[u, phantom, "\circlearrowleft"] & G_2 \ar[r] \ar[u, phantom, "\circlearrowleft"] & G_3 \ar[u, phantom, "\circlearrowleft"]
\end{tikzcd} \]
are equivariant maps, where each $A_i$ is acted on freely and transitively by $G_i$.  Then, the image  $f_2(f_1(A_1))$ is a single element $* \in A_3$; and, $a_2 \in A_2$ is in the image of $f_1$ if and only if $f_2(a_2)= * \in A_3$.
\end{lemma}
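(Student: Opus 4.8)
The plan is to reduce everything to elementary torsor bookkeeping. Fix group homomorphisms $G_1 \xrightarrow{\phi_1} G_2 \xrightarrow{\phi_2} G_3$ underlying the given exact sequence, so exactness reads $\operatorname{im}(\phi_1) = \Ker(\phi_2)$, and the equivariance hypothesis on $f_i$ says $f_i(g\cdot a) = \phi_i(g)\cdot f_i(a)$ for all $g \in G_i$ and $a \in A_i$. Since each $A_i$ is a $G_i$-torsor, after choosing one point of $A_i$ every other point is uniquely a translate of it; this is the only structural fact I will use.

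First I would show $f_2 \circ f_1$ is constant. Pick any $a_1 \in A_1$ and set $\ast \= f_2(f_1(a_1)) \in A_3$. For an arbitrary $a_1' \in A_1$ write $a_1' = g_1 \cdot a_1$ with $g_1 \in G_1$; then $f_2(f_1(a_1')) = \phi_2(\phi_1(g_1)) \cdot \ast = \ast$, because $\phi_2 \circ \phi_1$ is trivial (as $\operatorname{im}\phi_1 \subseteq \Ker\phi_2$). This proves the first assertion, and the forward direction of the second is then immediate: if $a_2 = f_1(a_1)$ then $f_2(a_2) = \ast$.

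For the reverse direction, suppose $f_2(a_2) = \ast$. Fix $a_1 \in A_1$ and, using that $A_2$ is a torsor, write $a_2 = g_2 \cdot f_1(a_1)$ for a unique $g_2 \in G_2$. Applying $f_2$ and equivariance, $\ast = f_2(a_2) = \phi_2(g_2) \cdot f_2(f_1(a_1)) = \phi_2(g_2) \cdot \ast$; since $G_3$ acts freely on $A_3$ this forces $\phi_2(g_2) = e$, i.e. $g_2 \in \Ker\phi_2 = \operatorname{im}\phi_1$. Choosing $g_1 \in G_1$ with $\phi_1(g_1) = g_2$, we get $f_1(g_1 \cdot a_1) = \phi_1(g_1)\cdot f_1(a_1) = g_2 \cdot f_1(a_1) = a_2$, so $a_2$ lies in the image of $f_1$.

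There is no genuine obstacle here; the two points that require a moment's attention are invoking \emph{freeness} of the $G_3$-action to cancel $\ast$ in the last step, and invoking \emph{exactness} (rather than merely $\phi_2\phi_1 = e$) to lift $g_2$ back along $\phi_1$. Transitivity of the actions is what makes the reduction to a chosen base point legitimate throughout.
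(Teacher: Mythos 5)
Your proof is correct and complete: the constancy of $f_2\circ f_1$ follows from $\phi_2\phi_1=e$, and the converse direction correctly uses freeness of the $G_3$-action to deduce $\phi_2(g_2)=e$ and exactness to lift $g_2$ along $\phi_1$. The paper deliberately omits the argument, calling it ``a simple homework problem,'' and yours is exactly the standard torsor-bookkeeping proof intended.
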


\begin{prop}\label{Prop:Grbtrivc}The natural functor 
\[  \O^2_G(M) \sslash G\GBunc(M) \xrightarrow{\cIh_\bullet} G\GGrbtrivc(M)\] 
is an equivalence of 2-groupoids and induces a natural isomorphism of abelian groups
\[ \frac{\O^2_G(M)}{\O^2_G(M)_\Z} \xrightarrow{\iso} \pi_0\left( G\GGrbtrivc(M) \right). \]
\end{prop}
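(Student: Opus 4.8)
The plan is to prove the functor $\cIh_\bullet$ is essentially surjective and fully faithful, then extract the statement on $\pi_0$. First I would establish essential surjectivity: given $\cLh = (Y, L, \nabla, B, \mu) \in G\GGrbtrivc(M)$, by Proposition \ref{Prop:StableIso} (applied to the underlying topological gerbe, which is isomorphic to $\cI$) there is a fiber product trivialization $\cK = (Y, K, \upalpha) \in G\GGrb^{\FP}(\cI, \cL)$ with $\upalpha \colon \delta K \xrightarrow{\iso} L$ satisfying the compatibility with $\mu$ unpackaged above. Since $G$ is compact, choose a $G$-invariant connection $\nabla_K$ on $K$ (existence via the exact sequence \eqref{Eq:CoverSeq2}). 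Set $\wt{B} \= B - \curv_G(\nabla_K) \in \O^2_G(Y)$; then $\delta \wt{B} = \delta B - \delta\curv_G(\nabla_K) = \curv_G(\nabla) - \curv_G(\delta\nabla_K)$, and since $\upalpha$ identifies $\delta(K,\nabla_K)$ with $(L,\nabla)$ up to a connection-preserving isomorphism only after correcting by a form, one checks that $\delta\wt B = 0 \in \O^2_G(Y^{[2]})$ (the discrepancy between $\delta\curv_G(\nabla_K)$ and $\curv_G(\nabla)$ is exact on each $Y^{[2]}$ but the relevant bundle $\delta K \cong L$, so the curvatures agree). By the exactness of \eqref{Eq:CoverSeq3} in degree $k=2$, there is a unique $B_0 \in \O^2_G(M)$ with $\pi^* B_0 = \wt B$. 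The data $(K, \nabla_K, \upalpha)$ then assembles into a 1-isomorphism $\cIh_{B_0} \xrightarrow{\iso} \cLh$ in $G\GGrbc(M)$, so $\cIh_\bullet$ is essentially surjective.

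Next I would verify full faithfulness on the level of morphism groupoids. Fix $B_1, B_2 \in \O^2_G(M)$. By the adjunction \eqref{Eq:Adjunction} and $\cIh_{B_1}^{-1} \otimes \cIh_{B_2} \iso \cIh_{B_2 - B_1}$, it suffices to identify $G\GGrbc(\cIh_0, \cIh_{B})$ with the appropriate morphism groupoid in $\O^2_G(M)\sslash G\GBunc(M)$, namely the subgroupoid of $G\GBunc(M)$ on objects $(K,\nabla)$ with $\curv_G(\nabla) = B$. By Proposition \ref{Prop:StableIso} again, $G\GGrbc(\cIh_0, \cIh_B) \simeq G\GGrbc^{\FP}(\cIh_0, \cIh_B)$, and an object of the latter over the $G$-cover $\Id\colon M \to M$ is exactly a pair $(K,\nabla_K) \in G\GBunc(M)$ together with $\upalpha\colon \delta K \to 1$ over $M^{[2]} = M$; but $\delta K$ over $M$ is canonically trivial, so $\upalpha$ is no extra data, the curvature condition $\curv_G(\nabla_K) = \zeta^*(B - 0) = B$ is precisely the constraint, and the 2-morphism condition reduces to a bare isomorphism $(K,\nabla_K) \to (K',\nabla_{K'})$ in $G\GBunc(M)$. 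This gives an isomorphism of morphism groupoids compatible with composition, so $\cIh_\bullet$ is fully faithful; combined with essential surjectivity, it is an equivalence of 2-groupoids.

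Finally, for the statement on isomorphism classes: an equivalence of 2-groupoids induces a bijection on $\pi_0$, so $\pi_0(\O^2_G(M) \sslash G\GBunc(M)) \iso \pi_0(G\GGrbtrivc(M))$. Two forms $B_1, B_2 \in \O^2_G(M)$ lie in the same component of the action 2-groupoid iff there exists $(K,\nabla) \in G\GBunc(M)$ with $\curv_G(\nabla) = B_2 - B_1$; by the differential-form description of $G$-equivariant $S^1$-bundles with connection (equivalently, the degree-2 case of the three short exact sequences, or directly the classification $\HG^2(M) \iso G\GBunc(M)_{/\iso}$ together with the surjectivity of $\curv_G$ onto $\O^2_G(M)_\Z$), this happens iff $B_2 - B_1 \in \O^2_G(M)_\Z$. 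Hence $\pi_0(\O^2_G(M)\sslash G\GBunc(M)) \iso \O^2_G(M)/\O^2_G(M)_\Z$, and the additivity of curvature under tensor product makes this an isomorphism of abelian groups. The main obstacle I anticipate is the bookkeeping in essential surjectivity — specifically, verifying cleanly that the corrected form $\wt B = B - \curv_G(\nabla_K)$ is $\delta$-closed and hence descends, since this requires carefully tracking how the isomorphism $\upalpha \colon \delta(K,\nabla_K) \to (L,\nabla)$ fails to be connection-preserving and relating that failure to $\delta B - \curv_G(\nabla)$; once the descent is in place, the rest follows formally from Proposition \ref{Prop:StableIso}, descent for equivariant bundles (Lemma \ref{Lem:Descent}), and the exact sequences \eqref{Eq:CoverSeq2}–\eqref{Eq:CoverSeq3}.
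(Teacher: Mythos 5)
There is a genuine gap in your essential surjectivity argument. You choose an \emph{arbitrary} $G$-invariant connection $\nabla_K$ on $K$ and then assert that $\wt B = B - \curv_G(\nabla_K)$ satisfies $\delta\wt B = 0$ because ``the relevant bundle $\delta K \cong L$, so the curvatures agree.'' This reasoning is false: curvature is an invariant of the \emph{connection}, not of the isomorphism class of the bundle. For a generic invariant connection $\nabla_K$, the two connections $\delta\nabla_K$ and $\upalpha^*\nabla$ on $\delta K$ differ by a nonzero invariant $1$-form $\lambda \in \O^1(Y^{[2]})^G$, so
\[ \delta\wt B \;=\; \curv_G(\nabla) - \curv_G(\delta\nabla_K) \;=\; \pm\, d_G\lambda \;\neq\; 0 \]
in general, and $\wt B$ does \emph{not} descend to $M$. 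You correctly sense this as ``the main obstacle,'' but the resolution you offer does not work.

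The correct repair is not to take an arbitrary invariant connection but to produce a connection $\nabla^K$ for which $\upalpha$ is \emph{connection-preserving}, i.e.\ $\delta(K,\nabla^K) \xrightarrow{\upalpha} (L,\nabla)$ in $G\GBunc(Y^{[2]})$; once that holds, $\delta(B - \curv_G(\nabla^K)) = \curv_G(\nabla) - \curv_G(\upalpha^*\nabla) = 0$ on the nose and descent via \eqref{Eq:CoverSeq3} is immediate. The existence of such a $\nabla^K$ is itself a nontrivial step: the affine spaces $\cA(K)^G \to \cA(\delta K)^G \to \cA(\delta^2 K)^G$ sit over the exact sequence \eqref{Eq:CoverSeq2} of invariant $1$-forms, so (by the elementary Lemma \ref{Lem:AffineExact}) the connection $\upalpha^*\nabla$ lies in the image of $\delta$ if and only if it induces the trivial connection on $\delta^2 K \iso 1$; this last condition follows from the compatibility $\upmu \circ \delta\upalpha = (\text{canonical iso})$ together with the fact that $\upmu\colon \delta(L,\nabla)\to 1$ preserves connections. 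Your full-faithfulness argument and the identification of $\pi_0$ with $\O^2_G(M)/\O^2_G(M)_\Z$ are essentially correct and match the paper (modulo one small point: over $M^{[2]}=M$ the datum $\upalpha$ is a priori an $S^1$-valued function, not vacuous, and one needs the cocycle compatibility with $\mu=\Id$ to force it to be the identity).
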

\begin{proof}
We first we show that the functor is surjective on isomorphism classes of objects by proving that any $\cLh = (Y, L, \nabla^L, B, \mu) \in G\GGrbtrivc(M)$ is isomorphic to $\cIh_\rho$ for some $\rho \in \O^2_G(M)$.  By definition, if $\cLh \in G\GGrbtrivc(M)$ there exists a topological isomorphism $\cI \xrightarrow{\cK} \cL$.  As explained above, we use Proposition \ref{Prop:StableIso} to assume that $\cK$ is given by a bundle $K \in G\GBunc(Y)$ and an isomorphism
\[ \delta K  \overset{\upalpha}\longrightarrow  L \quad \in G\GBun(Y^{[2]}),\] where $\de K$ denotes $  \pi_2^* K \otimes (\pi_1^* K)^{-1}$.  We now show the existence of a $G$-equivariant connection on $K$.  Letting $\cA(-)^G$ denote the affine space of $G$-invariant connections on an equivariant bundle, we have
\[ \begin{tikzcd}[row sep=tiny] &\cA(K)^G \ar[r, "\delta"] &\cA(\delta K)^G \ar[r,"\delta"] & \cA(\delta^2 K)^G \iso \cA(1)^G \\ 
\O^1(M)^G \ar[r]  &\O^1(Y)^G \ar[r, "\delta"] \ar[u, phantom, "\circlearrowleft"] & \O^1(Y^{[2]})^G \ar[r,"\delta"] \ar[u, phantom, "\circlearrowleft"] &\O^1(Y^{[3]})^G \ar[u, phantom, "\circlearrowleft"]  , \end{tikzcd}\]
where the top row is affine over the bottom row, and the bottom row is the exact sequence \eqref{Eq:CoverSeq3}.  By Lemma \ref{Lem:AffineExact}, an element $\nabla^{\delta K} \in \cA(\delta K)^G$ is in the image of $\delta \left(\cA(K)^G\right)$ if and only if it induces the trivial connection on $\delta(\delta K)\iso 1$,  where the canonical isomorphism also equals $\upmu\o\delta \upalpha$.   Pulling back $\nabla^L$ via $\upalpha$ gives $(\delta K, \upalpha^* \nabla^L) \in G\GBunc(Y^{[2]})$.  The natural isomorphism $\delta (L,\nabla^L) \xrightarrow{\upmu} 1$ then implies implies that 
\[ \delta(\upalpha^*\nabla^L) = (\delta \upalpha)^*(\delta \nabla^L) = (\delta \upalpha)^* (\upmu^* 1) = (\upmu \o \delta \upalpha)^* 1. \] 
Hence, there exists a (non-unique) connection $\nabla^K$ such that $(K, \nabla^K) \in G\GBunc(Y)$ and $\delta (K,\nabla^K) \xrightarrow{\alpha} (L,\nabla^L) \in G\GBunc(Y^{[2]})$.  Finally, consider $B - \curv_G(\nabla^K) \in \O_G^2(Y)$.  Since $\delta(B-\curv_G(\nabla^K)) = \curv_G(\nabla^L) - \curv_G(\alpha^* \nabla^L) = 0$, it follows that there exists a unique $\rho \in \O_G^2(M)$ such that $\pi^* \rho = B - \curv_G(\nabla^K)$.  Therefore, there exists an isomorphism $\cKh \colon \cIh_\rho \to \cLh$ given by  $\cKh = (Y,K,\nabla^K, \alpha)$.

We now consider the groupoid of morphisms.  Any $(K,\nabla) \in G\GBunc(M)$ that satisfies $\curv_G(\nabla) = B_2-B_1 \in \O^2_G(M)$ clearly determines a 1-morphism, viewed as an object in the groupoid of morphisms $G\GGrbc(\cIh_{B_1},\cIh_{B_2})$.  To see that this is essentially surjective, we again will use Lemma \ref{Lem:Descent}.  We may assume that an isomorphism $\cIh_{B_1} \to \cIh_{B_2}$ is isomorphic to one given by a $G$-equivariant $S^1$-connection $(K,\nabla)\in G\GBunc(M)$ satisfying $\curv_G(\nabla) = B_2 - B_1$, together with an isomorphism $\alpha\colon (K,\nabla) \to (K,\nabla)$.  The compatibility of $\alpha$ with $\mu_1=\mu_2 = \Id$ implies that $\alpha \o \alpha =\alpha$, and hence $\alpha = \Id$ (c.f. \cite[Section 3]{Waldorf-More}).  Therefore, any 1-morphism $\cIh_{B_1} \to \cIh_{B_2}$ is equivalent to some $(K,\nabla) \in G\GBunc (M)_{B_2-B_1}$.  Finally, for such 1-morphisms $(K_i,\nabla^i)$, the 2-morphisms $(K_1,\nabla^1) \Rightarrow (K_2, \nabla^2)$ in $G\GGrbc^{\FP}(\cIh_{B_1}, \cIh_{B_2})$ are precisely isomorphisms in $G\GBunc(M)$.

Finally, the image of $\curv_G \colon G\GBunc(M) \to \O^2_G(M)$ equals $\O^2_G(M)_\Z$.  This fact, which we believe is fairly well-known, is implied by the isomorphism $\pi_0 \left(G\GBunc(M) \right) \iso \HG^2(M)$  \cite[Proposition 5.10]{Redden16a} and the fact that $\HG^2(M) \to \O^2_G(M)_\Z$ is surjective. 
\end{proof}

\begin{cor}\label{Cor:Aut}
The natural functors 
\begin{align*}
G\GBunc(M)_{\fl} &\xrightarrow{\simeq} \Aut (\cLh),\\
G\GBun(M) &\xrightarrow{\simeq} \Aut (\cL), 
\end{align*} are equivalences of groupoids.
\end{cor}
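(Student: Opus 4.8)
The plan is to deduce Corollary \ref{Cor:Aut} directly from the structural results already established, rather than re-running any descent arguments. The key observation is the chain of natural equivalences
\[ \Aut(\cLh) = G\GGrbc(\cLh, \cLh) \xrightarrow{\iso} G\GGrbc(\cIh_0, \cLh^{-1}\otimes \cLh) \xrightarrow{\iso} G\GGrbc(\cIh_0, \cIh_0) = \Aut(\cIh_0) \]
recorded in \eqref{Eq:Automorphisms}, together with the analogous statement for $\Grb$ without connection. Thus it suffices to identify $\Aut(\cIh_0)$ with $G\GBunc(M)_{\fl}$ and $\Aut(\cI)$ with $G\GBun(M)$.

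First I would apply Proposition \ref{Prop:Grbtrivc}, or really its proof, to the groupoid of morphisms $G\GGrbc(\cIh_0,\cIh_0) = G\GGrbc(\cIh_{B}, \cIh_{B})$ with $B=0$. That argument shows any object of $G\GGrbc^{\FP}(\cIh_0,\cIh_0)$ is isomorphic to one given by a single $(K,\nabla)\in G\GBunc(M)$ with $\curv_G(\nabla) = 0 - 0 = 0$, i.e. with $(K,\nabla)$ \emph{flat} in the equivariant sense, and with the structure isomorphism $\alpha$ forced to be $\Id$ by the compatibility with $\mu_1=\mu_2=\Id$ (the equation $\alpha\o\alpha=\alpha$). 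The same proof identifies 2-morphisms between two such 1-morphisms $(K_1,\nabla^1)\Rightarrow(K_2,\nabla^2)$ with connection-preserving isomorphisms in $G\GBunc(M)$. Combined with Proposition \ref{Prop:StableIso}, which lets us reduce an arbitrary morphism to a fiber product one, this establishes that the assignment $(K,\nabla)\mapsto \bigl((M,1,0,\Id)\xrightarrow{(M,K,\nabla,\Id)}(M,1,0,\Id)\bigr)$ defines a functor $G\GBunc(M)_{\fl}\to \Aut(\cIh_0)$ that is essentially surjective and fully faithful, hence an equivalence of groupoids. Composing with \eqref{Eq:Automorphisms} gives the first claimed equivalence $G\GBunc(M)_{\fl}\xrightarrow{\simeq}\Aut(\cLh)$.

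The second equivalence is the same argument with all connections and curving forms erased: running the analog of Proposition \ref{Prop:Grbtrivc} for $G\GGrb$, an object of $G\GGrb^{\FP}(\cI,\cI)$ is a bundle $K\in G\GBun(M)$ with $\alpha = \Id$ (no curvature constraint now), and 2-morphisms are isomorphisms in $G\GBun(M)$; composing with the non-connective version of \eqref{Eq:Automorphisms} yields $G\GBun(M)\xrightarrow{\simeq}\Aut(\cL)$. One can also phrase both at once via the forgetful functor, noting it intertwines the two constructions. The main point requiring care — though it is already handled inside the proof of Proposition \ref{Prop:Grbtrivc} and in \cite[Section 3]{Waldorf-More} — is verifying that the structure isomorphism $\alpha$ of a self-morphism of the trivial gerbe is necessarily the identity; everything else is a bookkeeping translation through the Picard structure, so I do not anticipate a genuine obstacle, only the need to cite the right earlier statements.
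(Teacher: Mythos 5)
Your proposal is correct and follows essentially the same route as the paper: reduce to $\Aut(\cIh_0)$ (resp.\ $\Aut(\cI)$) via the Picard identification \eqref{Eq:Automorphisms}, then invoke Proposition \ref{Prop:Grbtrivc} (and its connection-free analogue) to identify that automorphism groupoid with $G\GBunc(M)_{\fl}$ (resp.\ $G\GBun(M)$). The extra detail you spell out about $\alpha\o\alpha=\alpha$ forcing $\alpha=\Id$ is exactly the step the paper delegates to the proof of Proposition \ref{Prop:Grbtrivc}.
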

\begin{proof}
As noted in \eqref{Eq:Automorphisms}, the Picard structure gives a natural identification $\Aut(\cLh) \iso \Aut(\cIh_0)$ for any $\cLh \in G\GGrbc(M)$.  By Proposition \ref{Prop:Grbtrivc}, $\Aut(\cIh_0)$ is equivalent to the full subgroupoid of $G\GBunc(M)$ consisting of objects $(K,\nabla)$ with equivariant curvature $\curv_G(\nabla) =0-0=0$, which is the groupoid $G\GBunc(M)_{\fl}$.   Similarly, there is a natural identification $\Aut(\cL) \iso \Aut(\cI)$ for any $\cL \in G\GGrb(M)$.  The same argument used in Proposition \ref{Prop:Grbtrivc}, but ignoring the differential forms and connection data, shows $\Aut(\cI)$ is equivalent to $G\GBun(M)$.
\end{proof}

\begin{prop}\label{Prop:DeRhamClass}Let $\cLh_1, \cLh_2 \in G\GGrbc(M)$.  If there is an isomorphism of the underlying equivariant bundle gerbes without connection $\cL_1 \xrightarrow{\cK} \cL_2 \in G\GGrb(M)$, then 
\[ [\curv_G(\cLh_1)] = [\curv_G(\cLh_2)] \in H^3_G(M;\R). \]
\end{prop}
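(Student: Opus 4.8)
The plan is to reduce this statement about de Rham classes in $H^3_G(M;\R)$ to Proposition \ref{Prop:Curvature}, which already asserts equality of equivariant curvature \emph{forms} in $\O^3_G(M)$ when the gerbes are isomorphic \emph{with connection}. The issue here is that we are only given an isomorphism $\cL_1 \xrightarrow{\cK} \cL_2$ of the underlying topological gerbes, with no connection data on $\cK$; so the two equivariant curvature forms need not be equal on the nose, only cohomologous. First I would observe that a connection-free isomorphism $\cK$ gives, in particular, an isomorphism of underlying topological gerbes, so $\cL_1$ and $\cL_2$ have the same class in $G\GGrb(M)$; equivalently $\cL_1^{-1} \otimes \cL_2 \iso \cI \in G\GGrb(M)$. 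By the adjunction \eqref{Eq:Adjunction} and additivity of curvature, $\curv_G(\cLh_2) - \curv_G(\cLh_1) = \curv_G(\cLh_1^{-1} \otimes \cLh_2)$, and $\cLh_1^{-1}\otimes\cLh_2$ has underlying topological gerbe $\cL_1^{-1}\otimes\cL_2 \iso \cI$, hence lies in $G\GGrbtrivc(M)$. So it suffices to show: if $\cLh \in G\GGrbtrivc(M)$, then $[\curv_G(\cLh)] = 0 \in H^3_G(M;\R)$.

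The key step is then to invoke Proposition \ref{Prop:Grbtrivc}: any $\cLh \in G\GGrbtrivc(M)$ is isomorphic (with connection) to $\cIh_\rho$ for some $\rho \in \O^2_G(M)$. By Proposition \ref{Prop:Curvature}, isomorphic objects of $G\GGrbc(M)$ have equal equivariant curvature forms, so $\curv_G(\cLh) = \curv_G(\cIh_\rho) = d_G\rho \in \O^3_G(M)$. This is $d_G$-exact, hence represents $0$ in $H^3(\O^*_G(M), d_G) \iso H^3_G(M;\R)$. Putting the pieces together, $[\curv_G(\cLh_2)] - [\curv_G(\cLh_1)] = [\curv_G(\cLh_1^{-1}\otimes\cLh_2)] = [d_G\rho] = 0$, which is the claim.

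The only place where real care is needed is the reduction from ``isomorphic topological gerbes'' to ``$\cLh_1^{-1}\otimes\cLh_2 \in G\GGrbtrivc(M)$.'' One must check that the forgetful functor $G\GGrbc(M) \to G\GGrb(M)$ respects the Picard/tensor structure — i.e. that the underlying topological gerbe of $\cLh_1^{-1}\otimes\cLh_2$ is indeed $\cL_1^{-1}\otimes\cL_2$ — and that an isomorphism $\cL_1 \to \cL_2$ in $G\GGrb(M)$ yields an isomorphism $\cI \iso \cL_1^{-1}\otimes\cL_2$; both follow formally from the symmetric monoidal structure discussed around \eqref{Eq:Adjunction}, together with the fact (established in the construction of that structure, following Waldorf) that $\curv_G$ is additive under tensor product. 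I expect this bookkeeping, rather than any substantive analytic input, to be the main (and mild) obstacle; everything analytic has already been packaged into Propositions \ref{Prop:Curvature} and \ref{Prop:Grbtrivc}.
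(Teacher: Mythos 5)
Your proposal is correct and follows essentially the same route as the paper: pass to $\cLh_1^{-1}\otimes\cLh_2$, observe its underlying topological gerbe is trivializable, invoke Proposition \ref{Prop:Grbtrivc} to identify it with some $\cIh_\rho$ up to isomorphism with connection, and conclude the curvature difference is $d_G\rho$, hence exact. The paper's proof is just a more compressed version of the same argument, leaving implicit the appeals to Proposition \ref{Prop:Curvature} and to additivity of $\curv_G$ that you spell out.
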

\begin{proof}
Let $\cK\colon \cL_1 \to \cL_2$ be an isomorphism, which is is equivalent to an isomorphism $\cI \to \cL_1^{-1} \otimes \cL_2$.  By Proposition \ref{Prop:Grbtrivc}, there is an isomorphism $\cIh_B \to \cLh_1^{-1} \otimes \cLh_2$ for some $B \in \O^2_G(M)$, and thus
\[ d_G B = \curv_G (\cLh_2) - \curv_G(\cLh_1) \in \O^3_G(M).  \qedhere \]
\end{proof}

The above discussion may be summarized by the following proposition, which is given by combining  Propositions \ref{Prop:Curvature}, \ref{Prop:Grbtrivc}, \ref{Prop:DeRhamClass} and \cite[Proposition 3.2]{Sti10}.

\begin{prop}\label{Prop:StienonProperties}
The following is a commutative diagram of abelian groups
\begin{equation}\label{Eq:StienonProperties} \vcenter{ \xymatrix{ \pi_0 \left(G\GGrbc(M)\right) \ar@{->>}[r] \ar[d]^{\curv_G}&  \pi_0 \left(G\GGrb(M) \right)\ar[d] \\
\O^3_G(M)_{\operatorname{closed}} \ar[r] & H^3_G(M;\R).} }  \end{equation}
The upper horizontal map is surjective and determines the short exact sequence
\[ 0 \to \frac{\O^2_G(M)}{\O^2_G(M)_\Z} \xrightarrow{\cIh_{\bullet}}  \pi_0 \left(G\GGrbc(M)\right) \to  \pi_0 \left( G\GGrb(M) \right)  \to 0.\]
\end{prop}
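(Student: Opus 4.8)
The plan is to identify the four maps in \eqref{Eq:StienonProperties}, observe that the square commutes essentially by construction, and then extract the short exact sequence from Propositions \ref{Prop:Curvature}, \ref{Prop:Grbtrivc} and \ref{Prop:DeRhamClass} together with Sti\'enon's existence theorem. First I would record that both $G\GGrbc(M)$ and $G\GGrb(M)$ are symmetric monoidal under $\otimes$ with every object invertible, so $\pi_0$ of each is an abelian group and the connection-forgetting functor induces the group homomorphism $\pi_0\big(G\GGrbc(M)\big) \twoheadrightarrow \pi_0\big(G\GGrb(M)\big)$ appearing as the top arrow. By Proposition \ref{Prop:Curvature} the assignment $\cLh \mapsto \curv_G(\cLh)$ descends to $\pi_0\big(G\GGrbc(M)\big)$, and since $d_G\curv_G(\cLh) = 0$ and $\curv_G$ is additive under $\otimes$, this is a homomorphism into $\O^3_G(M)_{\operatorname{closed}}$; it is the left vertical arrow, and the bottom arrow is the equivariant de Rham map to $H^3_G(M;\R)$. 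Finally, the right vertical arrow is defined to send $[\cL]$ to $[\curv_G(\cLh)] \in H^3_G(M;\R)$ for any equivariant connection $\cLh$ on $\cL$; such a $\cLh$ exists by \cite[Proposition 3.2]{Sti10}, and the resulting class is independent of the choice by Proposition \ref{Prop:DeRhamClass}, so the arrow is well defined and the square commutes tautologically.

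For the exact sequence, surjectivity of the top arrow is exactly \cite[Proposition 3.2]{Sti10}, which gives exactness at $\pi_0\big(G\GGrb(M)\big)$. Because the top arrow is a homomorphism of abelian groups, its kernel is $\{[\cLh] : \cL \cong \cI \text{ in } G\GGrb(M)\}$, which is by definition the image of $\pi_0\big(G\GGrbtrivc(M)\big)$, the $\pi_0$ of the \emph{full} subcategory of objects with trivializable underlying gerbe; fullness of the inclusion makes this map injective on $\pi_0$, so the kernel is a copy of $\pi_0\big(G\GGrbtrivc(M)\big)$. Proposition \ref{Prop:Grbtrivc} then identifies $\pi_0\big(G\GGrbtrivc(M)\big)$ with $\O^2_G(M)/\O^2_G(M)_\Z$ via the injective homomorphism $\cIh_\bullet$ (it is a homomorphism because $\cIh_{B_1}\otimes\cIh_{B_2} = \cIh_{B_1+B_2}$). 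Combining these identifications yields exactness of
\[ 0 \longrightarrow \frac{\O^2_G(M)}{\O^2_G(M)_\Z} \xrightarrow{\cIh_\bullet} \pi_0\big(G\GGrbc(M)\big) \longrightarrow \pi_0\big(G\GGrb(M)\big) \longrightarrow 0. \]

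I expect the only genuinely delicate point to be the identification $\ker(\text{forget}) = \operatorname{im}(\cIh_\bullet)$: the inclusion ``$\supseteq$'' is immediate, whereas ``$\subseteq$'' requires promoting an abstract topological isomorphism $\cL \cong \cI$ to one induced by an $S^1$-bundle — this is where Proposition \ref{Prop:StableIso} and the Picard (adjunction) structure enter, exactly as in the proof of Proposition \ref{Prop:Grbtrivc} — and then using that $\pi_0$ of a full subcategory injects. Everything else is bookkeeping: verifying that the forgetful and curvature maps as well as $\cIh_\bullet$ are group homomorphisms (additivity of curvature and of the monoidal structure) and that $\curv_G(\cLh)$ is $d_G$-closed, both of which were established earlier via \cite[Lemma 3.3]{Sti10}.
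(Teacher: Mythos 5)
Your proposal is correct and follows essentially the same route as the paper, which simply assembles Propositions \ref{Prop:Curvature}, \ref{Prop:Grbtrivc}, \ref{Prop:DeRhamClass} and \cite[Proposition 3.2]{Sti10} (the latter giving surjectivity of the forgetful functor via the exact sequence \eqref{Eq:CoverSeq2}). Your identification of the kernel with $\pi_0(G\GGrbtrivc(M))$ via fullness, and then with $\O^2_G(M)/\O^2_G(M)_\Z$ via Proposition \ref{Prop:Grbtrivc}, is exactly the intended argument.
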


\subsection{The induced simplicial gerbe}
As observed in numerous sources (e.g.  \cite{Mein-Grb, MRSV15j, NikSch-Equiv, Sti10, TuXu15}), an equivariant gerbe $\cL \in G\GGrb(M)$ naturally determines a gerbe $G^\bullet \times \cL$ on the simplicial manifold $G^\bullet \times M$.  We write this as
\begin{equation}\label{Eq:SimplicialGerbe} \begin{tikzcd} \cdots  \quad G\times G \times \cL \ar[r, "\wt{d}_1" description] \ar[r, shift left=1.5, "\wt{d}_0"] \ar[r, shift right=1.5, "\wt{d}_2" ' ] \ar[d]& G \times \cL  \ar[r, shift left=.75, "\wt{d}_0" ] \ar[r, shift right=.75, "\wt{d}_1" '] \ar[d]& \cL \ar[d] \\
\cdots  \quad G \times G \times M  \ar[r, "d_1" description] \ar[r, shift left=1.5, "d_0"] \ar[r, shift right=1.5, "d_2" ']& G \times M  \ar[r, shift left=.75, "d_0"] \ar[r, shift right=.75, "d_1" '] & M
\end{tikzcd} \end{equation}
where $d_i$ denote the face maps, with $d_0(g,x) = x$ the source and $d_1(g,x) = g\!\cdot\! x$ the target.  Explicitly, if $\cL = (Y, L, \mu) \in G\GGrb(M)$,  the product gerbe $(G^k\times \cL) \in \Grb(G^k\times M)$ is defined by $(G^k\times Y, G^k \times L, \Id \times \mu)$.  The diagram \eqref{Eq:SimplicialGerbe} is then an abbreviated form of the following diagram.
\begin{equation}\label{Eq:SimpGerbeFullDiag} \begin{tikzcd}[row sep=small, column sep=small]
\cdots &  G\times G \times L \ar[r, "\wt{d}_1" description] \ar[r, shift left=1.5, "\wt{d}_0"] \ar[r, shift right=1.5, "\wt{d}_2" '] \ar[d]& G \times L  \ar[r, shift left=.75, "\wt{d}_0"] \ar[r, shift right=.75, "\wt{d}_1" '] \ar[d] & L \ar[d] \\
\cdots & G \times G \times Y^{[2]} \ar[r] \ar[r, shift left=1.5] \ar[r, shift right=1.5] \ar[d, shift left=.75] \ar[d, shift right=.75]& G \times Y^{[2]} \ar[r, shift left=.75] \ar[r, shift right=.75] \ar[d, shift left=.75] \ar[d, shift right=.75]& Y^{[2]}  \ar[d, shift left=.75] \ar[d, shift right=.75]\\
\cdots & G\times G \times Y \ar[r] \ar[r, shift left=1.5] \ar[r, shift right=1.5] \ar[d]& G \times Y  \ar[r, shift left=.75] \ar[r, shift right=.75] \ar[d]& Y \ar[d] \\
\cdots & G \times G \times M \ar[r, "d_1" description] \ar[r, shift left=1.5, "d_0"] \ar[r, shift right=1.5, "d_2" '] & G \times M  \ar[r, shift left=.75, "d_0"] \ar[r, shift right=.75, "d_1" '] & M
\end{tikzcd}\end{equation}

The maps $\wt{d}_i \colon G^{k+1} \times \cL \to G^k \times \cL$ determine a particularly strong form of gerbe isomorphism  $d_i^*(G^k\times \cL) \iso (G^{k+1}\times \cL) \in \Grb(G^{k+1}\times M)$ where all structures are diffeomorphic; one does not need to first pass to another cover.

\section{Gerbes on the quotient stack}\label{Sec:Stack}

We now give a different and more conceptual view of $G$-equivariant gerbe connections.  As a simpler example, it is well-known that $G$-equivariant $S^1$-bundles on $M$ are equivalent to $S^1$-bundles on the Lie groupoid $G\times M \rightrightarrows M$, which are equivalent to $S^1$-bundles on the simplicial manifold $G^{\bullet}\times M$.  When connections are introduced, the situation is more complicated.  Given $P \in G\GBun(M)$, a $G$-invariant connection on $P$ will not define a connection on the induced bundle over the Lie groupoid $G\times M \rightrightarrows M$ unless that connection was $\fg$-basic.  However, when one realizes that $\E G\times_G M$ is the globally valid replacement of $M\sslash G$, it becomes natural to consider $\En G \times_G M$, the version of the quotient stack that includes connections on the principal $G$-bundles.

We {\it define} $\Grbc(\En G \times_G M) \= \Shv_{\infty}(\En G \times_G M, \BBn S^1)$.  In other words, an element $\cEllh \in \Grbc(\En G \times_G M)$ assigns to every test manifold $X$ a functor 
\[ (\En G\times_G M)(X) \longrightarrow  (\BBn S^1) (X) = \Grbc(X),\]
and it does so in a natural way compatible with pullback maps.  Explicitly, a map of $G$-bundles $\varphi$ gives an isomorphism between the two induced gerbes with connection, as indicated in the following diagram.
\begin{equation}\label{Eq:Functoriality} \begin{tikzcd}[column sep=small]
(P', \varphi^*\Theta) \ar[r, "\varphi"] \ar[d] & (P,\Theta) \ar[r,"f"] \ar[d] & M \\
X' \ar[r, "\bar{\varphi}"] & X
\end{tikzcd} \!\! \mapsto \,
\cEllh(P', \varphi^*\Theta, f\o \varphi) \xrightarrow{\cEllh(\varphi)} 
 \bar{\varphi}^* \cEllh(P,\Theta,f)   \in \Grbc(X')   \end{equation}
We similarly define $\Grb(\E G \times_G M) \= \Shv_\infty(\E G\times_G M, \B^2 S^1)$.  Fortunately, isomorphism classes of such gerbes may be classified via existing results and general theory.

\begin{prop}\label{Prop:H3GGrb}There are natural isomorphisms of abelian groups
\begin{align*} 
\pi_0( \Grb(\E G \times_G M)) &\xrightarrow{\iso}  H^3_G(M;\Z),\\
\pi_0 (\Grbc(\En G \times_G M)) &\xrightarrow{\iso}  \HG^3(M;\Z). \end{align*}
\end{prop}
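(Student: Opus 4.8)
The plan is to identify each side with a known homotopy-theoretic object. For the first isomorphism, recall that $\B^2 S^1 \simeq \K(\Z,3)$ as sheaves of $\infty$-groupoids (this is the statement that bundle gerbes without connection model the Eilenberg--MacLane object $\K(\Z,3)$, which follows from the descent description of gerbes and the fact that $S^1 \simeq B\Z$ in the relevant sense). Therefore
\[ \pi_0(\Grb(\E G \times_G M)) = \pi_0\Shv_\infty(\E G \times_G M, \B^2 S^1) \iso \pi_0 \Shv_\infty(\E G \times_G M, \K(\Z,3)) = H^3_{\Shv}(\E G \times_G M;\Z), \]
which by the chain of isomorphisms in \eqref{Eq:EquivCoh} (the $\K(A,n)$ are homotopy-invariant, so the arguments of \cite{BNV} apply) equals $H^3_G(M;\Z)$. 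For the second isomorphism, I would invoke the equivalence $\BBn S^1 \simeq \Kh(\Z,3)$ recorded just before \eqref{Eq:DiffH3}; then by definition of $\HG^3(M)$ as $\pi_0 \Shv_\infty(\En G \times_G M, \Kh(\Z,3))$ we get exactly
\[ \pi_0(\Grbc(\En G \times_G M)) = \pi_0 \Shv_\infty(\En G \times_G M, \BBn S^1) \iso \pi_0 \Shv_\infty(\En G \times_G M, \Kh(\Z,3)) = \HG^3(M). \]
Indeed, \eqref{Eq:DiffH3} already states precisely this, so the second part is essentially a restatement.

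The steps, in order, are: (1) cite/establish $\B^2 S^1 \simeq \K(\Z,3)$ and $\BBn S^1 \simeq \Kh(\Z,3)$ in $\Shv_\infty$ — both are recorded or immediate from the discussion in Section~\ref{Subsec:Coh}; (2) apply $\pi_0 \Shv_\infty(\E G \times_G M, -)$ and $\pi_0 \Shv_\infty(\En G \times_G M, -)$ respectively, using that an equivalence of target sheaves induces a bijection on $\pi_0$ of mapping spaces; (3) for the topological statement, identify $\pi_0\Shv_\infty(\E G \times_G M, \K(\Z,3))$ with $H^3_G(M;\Z)$ via \eqref{Eq:EquivCoh}; for the differential statement, unwind the definition of $\HG^3(M)$ from Section~\ref{Subsec:Coh}; (4) check the maps are group homomorphisms — the abelian group structures on both sides come from the infinite-loop (or rather, Picard/$E_\infty$) structure on $\B^2 S^1$, $\BBn S^1$, $\K(\Z,3)$, $\Kh(\Z,3)$, and the identifications in steps (1)--(3) are maps of such structures, so this is automatic; and (5) note naturality in $M$, which follows since every identification used is natural.

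The main (and essentially only) obstacle is step (1), namely pinning down the equivalences $\B^2 S^1 \simeq \K(\Z,3)$ and $\BBn S^1 \simeq \Kh(\Z,3)$ with enough care that they respect the abelian group structures. For the without-connection case this amounts to the statement that the 2-stack of bundle gerbes is the sheafification of $\DK(S^1[-2])$, hence $\simeq \L\DK(\Z[-3]) = \K(\Z,3)$ using $S^1 \simeq \DK(\Z[-1])$ sheafified; for the with-connection case one uses $\BBn S^1 \simeq \L\DK(S^1 \xrightarrow{d} \O^1 \xrightarrow{d} \O^2)$, which compares to the Deligne complex computing $\Kh(\Z,3)$. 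Both facts are standard (see \cite{NikSch-Equiv, BNV} and the discussion preceding \eqref{Eq:DiffH3}), so in the write-up I would simply cite them and present the proof as the short composition of canonical isomorphisms above, remarking that $H^3_G(M;\Z)$ and $\HG^3(M)$ could equivalently be taken as the \emph{definitions} of the two $\pi_0$ groups, in which case the proposition is a tautology serving to fix notation.
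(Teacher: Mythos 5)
Your treatment of the second isomorphism is correct and is exactly the paper's argument: $\BBn S^1 \simeq \L(\DK(S^1\xrightarrow{d}\O^1\xrightarrow{d}\O^2)) \simeq \Kh(\Z,3)$ is a genuine equivalence in $\Shv_\infty$ (the Deligne complex), and then the statement is the definition of $\HG^3(M)$.

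The first isomorphism, however, has a genuine gap: the claimed equivalence $\B^2 S^1 \simeq \K(\Z,3)$ in $\Shv_\infty$ is false. Here $\B^2 S^1 \simeq \L(\DK(S^1\to 0\to 0))$ with $S^1$ the sheaf of \emph{smooth} circle-valued functions, while $\K(\Z,3) = \L(\DK(\Z[-3]))$ has discrete coefficients. Already on a test manifold $X$ the two mapping spaces differ: $\pi_2$ of $(\B^2 S^1)(X)$ is $C^\infty(X,S^1)$ and $\pi_3$ vanishes, whereas $\K(\Z,3)(X)$ has $\pi_2 = H^1(X;\Z)$ and $\pi_3 = H^0(X;\Z)$. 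What is true is that the exponential sequence $0\to\Z\to\O^0\to S^1\to 0$ gives a map $\B^2 S^1 = \K(S^1,2) \to \K(\Z,3)$ whose fiber is $\K(\O^0,2)$, so on $\pi_0$ of maps out of $\cM$ one gets an isomorphism $H^2_{\Shv}(\cM;S^1)\iso H^3_{\Shv}(\cM;\Z)$ precisely when $H^2_{\Shv}(\cM;\O^0)$ and $H^3_{\Shv}(\cM;\O^0)$ vanish. For $\cM$ a manifold this is automatic (partitions of unity), but $\cM = \E G\times_G M$ is not a manifold, and this vanishing is the actual content of the proof: the paper's Lemma \ref{Lem:Flabby} reduces it, via $\L(M\sslash G)\simeq \E G\times_G M$ and a homotopy-limit computation over the simplicial manifold $G^\bullet\times M$, to the Abad--Crainic theorem on the vanishing of differentiable cohomology of \emph{proper} Lie groupoids --- which is where compactness of $G$ enters. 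Your step (1) silently assumes this; without it the first isomorphism does not follow. (Relatedly, $\pi_0(\Grb(\E G\times_G M))$ cannot simply be taken as the definition of $H^3_G(M;\Z)$, since the latter is fixed as $H^3(EG\times_G M;\Z)$ and the comparison is exactly what needs proving.)
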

\begin{proof}
The second isomorphism was already explained in Section \ref{Subsec:Coh}.  The Deligne complex gives a model for the 2-category of gerbes with connections, and hence there is an equivalence of sheaves 
\[ \BBn S^1 \simeq \L(\DK(S^1\xrightarrow{d}\O^1\xrightarrow{d}\O^2))\simeq \Kh(\Z,3),\]
which induces a natural isomorphism
\[ \pi_0 \Shv_\infty(\En G \times_G M, \BBn S^1) \iso \pi_0 \Shv_\infty(\En G \times_G M, \cKh(\Z,3)) =: \HG^3(M).\]

To see the corresponding result without connections, we know that the sheaf of bundle gerbes $\B^2 S^1$ is equivalent to $\L(\DK(S^1\to0\to0))$.  So, for any sheaf $\cM \in \Shv_\infty$ we have
\[ \pi_0 \Grb(\cM) \= \pi_0 \Shv_\infty( \cM, \B^2 S^1) \iso H^2_{\Shv}(\cM; S^1).\]
The short exact sequence of abelian Lie groups $0 \to \Z \into \R \to S^1\to 0$ induces a long  exact sequence 
\[  \cdots \to H^2_{\Shv}(\cM; \O^0) \to H^2_{\Shv}(\cM; S^1) \to H^3_{\Shv}(\cM; \Z) \to H^3_{\Shv}(\cM; \O^0) \to \cdots.  \]
Here we write $\O^0$ to indicate that $\R$ is the sheaf of smooth real-valued functions, as opposed to the sheaf $\R^\delta$ of locally constant functions.  By Lemma \ref{Lem:Flabby} below, which uses that $M \sslash G$ is a proper Lie groupoid, the first and last terms in the above sequence vanish and give an isomorphism 
\[ 0 \to H^2_{\Shv}( \E G\times_G M, S^1) \xrightarrow{\iso} H^3_{\Shv}( \E G\times_G M, \Z) \to 0. \]
Finally, the sheaf $\Z$ is discrete, and hence 
\[H^3_{\Shv}(\E G\times_G M;\Z) \iso \pi_0 \Top (EG \times_G M; K(\Z,3)) = H^3_G(M;\Z), \] 
as explained in \eqref{Eq:EquivCoh}.
\end{proof}

The following lemma, used in the proof of Proposition \ref{Prop:H3GGrb}, is a special case of one of the main theorems from \cite{MR3107517}, though their language looks different than ours.  For that reason, we prove why our desired result follows from the theorem of Abad--Crainic.

\begin{lemma}[Corollary 4.2 of \cite{MR3107517}]\label{Lem:Flabby}Suppose that $G$ is a compact Lie group.  If $n>0$, then $H^n_{\Shv}(\E G\times_G M, \O^0) = 0$.
\end{lemma}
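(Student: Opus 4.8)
The plan is to descend from the stack $\E G\times_G M$ to the underlying simplicial manifold $G^\bullet\times M$, reduce the sheaf cohomology with coefficients in $\O^0$ to the cohomology of a bar complex of smooth functions, and then recognize that bar complex as the differentiable cohomology of the action Lie groupoid, which is \emph{proper} because $G$ is compact. Here it is essential that $\O^0$ is not homotopy-invariant, so one cannot simply replace $\E G\times_G M$ by $EG\times_G M$ as in \eqref{Eq:EquivCoh}; the content of the lemma is precisely that, for $\O^0$-coefficients, the simplicial direction contributes nothing.

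First I would combine the equivalence $\E G\times_G M\simeq\L(M\sslash G)$ of Remark \ref{Rem:ActionGpd}, the sheafification adjunction \eqref{Eq:L}, and the Yoneda Lemma. Writing $\K(\O^0,n)\=\L(\DK(\O^0[-n]))$ and using that the presheaf $M\sslash G$ is the colimit over $\Delta^\op$ of the representables $G^\bullet\times M$, this yields
\[ H^n_{\Shv}(\E G\times_G M;\O^0)=\pi_0\,\Shv_\infty\big(\L(M\sslash G),\,\K(\O^0,n)\big)\iso \pi_0\,\holim_{\Delta}\,\K(\O^0,n)(G^\bullet\times M).\]

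Next I would use the classical fact that $\O^0$, the sheaf of smooth real-valued functions, is soft — indeed fine, since manifolds admit partitions of unity — so that $H^m_{\Shv}(X;\O^0)=0$ for every $X\in\Man$ and $m>0$. Consequently $\K(\O^0,n)$ restricted to manifolds is an ordinary Eilenberg--MacLane object, $\K(\O^0,n)(X)\simeq K(C^\infty(X;\R),n)$, and each term of the cosimplicial diagram $p\mapsto\K(\O^0,n)(G^p\times M)$ is concentrated in degree $n$. Therefore the Bousfield--Kan spectral sequence for the homotopy limit collapses onto its single nonzero row (equivalently, one computes the $\holim$ directly via Dold--Kan), giving
\[ H^n_{\Shv}(\E G\times_G M;\O^0)\iso H^n\!\Big(C^\infty(M;\R)\xrightarrow{\delta}C^\infty(G\times M;\R)\xrightarrow{\delta}C^\infty(G^2\times M;\R)\xrightarrow{\delta}\cdots\Big),\]
where $\delta$ is the alternating sum of pullbacks along the face maps of $G^\bullet\times M$. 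This is exactly the cochain complex computing the differentiable (groupoid) cohomology of the action Lie groupoid $(G\times M\rra M)$ with values in the trivial representation $\R$.

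Finally, since $G$ is compact the anchor map $G\times M\to M\times M$, $(g,x)\mapsto(x,gx)$, is proper, so $M\sslash G$ is a proper Lie groupoid. By \cite[Corollary 4.2]{MR3107517} — a proper Lie groupoid has vanishing differentiable cohomology in positive degrees with coefficients in any representation — the complex above is acyclic in degrees $n>0$, and the lemma follows. (Alternatively, one can exhibit an explicit contracting homotopy on the bar complex by integrating out a single $G$-variable against the normalized Haar measure on $G$.) I expect the one point requiring genuine care to be the middle step: checking rigorously that passing to $\E G\times_G M$ and then to $\O^0$-sheaf cohomology degenerates — through softness of $\O^0$ on each $G^p\times M$ together with the collapse of the descent spectral sequence — onto the bar complex of differentiable groupoid cohomology, i.e.\ faithfully translating between the $\infty$-stack framework used here and the Lie-groupoid framework of \cite{MR3107517}.
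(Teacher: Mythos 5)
Your proposal is correct and follows essentially the same route as the paper's proof: reduce to the presheaf $M\sslash G$ via the sheafification adjunction, use the vanishing of higher $\O^0$-cohomology on manifolds (partitions of unity) to collapse the homotopy limit over $\Delta$ onto the bar complex $\O^0(G^\bullet\times M)$, and then invoke properness of the action groupoid together with Corollary 4.2 of Abad--Crainic. The only cosmetic difference is that the paper phrases the middle step as ``$\DK(\O^0[-n])$ already satisfies descent, so $\L$ may be dropped,'' whereas you phrase it as softness forcing each $\K(\O^0,n)(G^p\times M)$ to be an Eilenberg--MacLane object; these are the same observation.
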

\begin{proof}If $G$ is compact, then the action of $G$ on $M$ is proper and $G\times M \rightrightarrows M$ is a proper Lie groupoid.  Thus, Corollary 4.2 of  \cite{MR3107517}  implies the (co)homology of
\[  \O^0(M) \xrightarrow{\delta} \O^0( G\times M) \xrightarrow{\delta} \O^0(G^2 \times M) \xrightarrow{\delta} \cdots \xrightarrow{\delta} \O^0( G^{n}\times M) \xrightarrow{\delta} \cdots \]
vanishes at all terms where $n>0$.

Our definition of sheaf cohomology in \eqref{Eq:SheafCohDef} states that 
\[  H^n_{\Shv} (\E G \times_G M, \O^0) \= \pi_0 \Shv_\infty\left(\E G\times_G M, \L(\DK( \O^0[-n]))\right).\]
One can actually calculate this by the following: 
\begin{align*} \Shv&_\infty(\E G \times_G M, \L(\DK(\O^0[-n]))) \simeq \Shv_\infty( \L(M \sslash G), \L(\DK(\O^0[-n]))) \\
&\simeq \Pre_\infty( M \sslash G, \L(\DK(\O^0[-n]))) 
\simeq \Pre_\infty( M \sslash G, \DK(\O^0[-n])) \\
&\simeq \Pre_\infty( \hocolim_{\Delta^\op} G^\bullet \times M, \DK(\O^0[-n])) 
\simeq \holim_{\Delta^\op} \Pre_\infty ( G^\bullet \times M, \DK(\O^0[-n])) \\
&\simeq \holim_{\Delta^\op}  \DK(\O^0(G^\bullet \times M)[-n])  
 \simeq \DK \big(  \cdots \xrightarrow{\delta} \O^0(G^{n-1}\times M) \xrightarrow{\delta} \O^0( G^{n}\times M)_{\delta\text{-cl}}  \big).
 \end{align*} 
 
The above string of equivalences is justified by the following.  The first equivalence follows from $\L(M\sslash G) \simeq \E G\times_G M$, as noted in Remark \ref{Rem:ActionGpd}.  The second is because sheafification is a left-adjoint.  For the third equivalence, the presheaf $\O^0 \in \Pre_{\Set}$ is flabby, so $\DK(\O^0[-n]) \in \Pre_\infty$ satisfies the descent property of $\Shv_\infty$ and $\L(\DK(\O^0[-n])) \simeq \DK(\O^0[-n]))$.  To see the fourth, for any simplicial set $B$ there is a natural equivalence $\hocolim_{\Delta^\op} N(S) \simeq B$, where we use the composition $\Delta^{\op} \xrightarrow{S}\Set \xrightarrow{N} \sSet$ to consider $N(S)$ as a simplicial object in simplicial sets.  The fifth is a general property of colimits, and the sixth is Yoneda's lemma.  The seventh is a simplification of the general principal that in chain complexes, homotopy limits over a simplicial diagram may be calculated via the total double complex; see \cite[Section 19.8]{DuggerHoCoLim} or \cite[Appendix B.1]{MR3376591}.

Finally, $\pi_0$ of the resulting simplicial set is simply $\O^0(G^n\times M)_{\delta\text{-cl}} / \delta(\O^0(G^{n-1} \times M))$, which is precisely what is proven to vanish by Abad--Crainic.
\end{proof}

\subsection{The functor between the two models}
Our goal is now to relate these two models of equivariant gerbe connections, given by $G\GGrbc(M)$ and $\Grbc(\En G \times_G M)$, in a way analogous to \eqref{Eq:EquivConns}.  The following proposition will be the key to doing so, and it generalizes the construction of \eqref{Eq:ConnsQuot}:  given $(P,\Theta) \in \Bunc[G](X)$ and $(L,\nabla)\in G\GBunc(P)$, there is a canonical way to modify $\nabla$ so that it becomes $G$-basic and descends to the quotient $L/G$.

\begin{prop}\label{Prop:Quotient} \
\begin{enumerate}
\item A principal $G$-bundle $P \in \Bun[G](X)$ determines a natural functor
\begin{align*} G\GGrb(P) &\xrightarrow{/G}  \Grb(X) \\
\cL &\longmapsto \cL/G.\end{align*}
\item A principal $G$-bundle with connection $(P,\Theta) \in \Bunc[G](X)$ determines a natural functor
\[ G\GGrbc(P) \xrightarrow{\Theta^*} \Grbc(X), \]
and the curvature satisfies
\[  \curv(\Theta^* \cLh) = \Theta^*(\curv_G(\cLh)) \in \O^3(X) \]
under $\O^3_G(P) \xrightarrow{\Theta^*} \O^3(X)$.  
\end{enumerate}
\end{prop}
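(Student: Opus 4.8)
## Proof Proposal

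The plan is to construct the functors directly by applying the descent/quotient construction componentwise to all the data defining an equivariant gerbe, exactly mirroring how the analogous statement for bundles (the descent functor $\Theta_P^*$ of \eqref{Eq:Theta*}) was established in \cite{Redden16a, Redden16b}. For part (1), given $\cL = (Y, L, \mu) \in G\GGrb(P)$ where $P \in \Bun[G](X)$ has a free $G$-action, I would first note that the $G$-cover $Y \xrightarrow{\pi} P$ is a $G$-manifold with free $G$-action, so $Y/G \to X$ is a surjective submersion and $(Y/G)^{[k]} \iso Y^{[k]}/G$. The bundle $L \in G\GBun(Y^{[2]})$ descends to $L/G \in \Bun((Y/G)^{[2]})$ by the standard quotient construction (this is the non-differential, non-connection analog of Lemma \ref{Lem:Descent}, which is classical, cf. Brylinski \cite{Bry93}), and the multiplication $\mu$ — being $G$-equivariant over $Y^{[3]}$ — descends to a multiplication on $L/G$ over $(Y/G)^{[3]}$. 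The associativity diagram descends because it is a diagram of $G$-equivariant isomorphisms. This defines $\cL/G \= (Y/G, L/G, \mu/G) \in \Grb(X)$. Functoriality on 1- and 2-morphisms is identical: a 1-morphism $\cKh = (Z, K, \alpha)$ has $Z \to Y_1 \times_P Y_2$ a $G$-cover with free $G$-action, and $K, \alpha$ descend; 2-morphisms descend the same way. Naturality in $X$ (compatibility with pullback of $P$ along $X' \to X$) is immediate since all constructions are defined by universal properties (quotients) that commute with pullback.

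For part (2), I would upgrade everything to include connections and equivariant curvings. Given $(P,\Theta) \in \Bunc[G](X)$ and $\cLh = (Y, L, \nabla, B, \mu) \in G\GGrbc(P)$: the connection $\nabla$ on $L \in G\GBunc(Y^{[2]})$ is $G$-invariant but need not be $G$-basic, so I would apply the curvature-modification construction $(1 - \iota_{\Theta})\nabla$ from \cite[Section 4.2]{Redden16a}, which is precisely the descent functor \eqref{Eq:Theta*} applied to the pullback of $\Theta$ along $Y^{[2]} \to P$. This produces a $G$-basic connection descending to $\nabla/G$ on $L/G$ over $(Y/G)^{[2]}$. For the curving, $B \in \O^2_G(Y)$ is an equivariant form; under the Weil homomorphism $\Theta^* \colon \O^2_G(Y) \to \O^2(Y/G) \iso \O^2(Y)_{G\text{-}\bas}$ of \eqref{Eq:WeilHomo} (with $\Theta$ pulled back to $Y$), it maps to a basic 2-form $\Theta^*B$. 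The compatibility $\curv_G(\nabla) = \pi_2^*B - \pi_1^*B$ in $\O^2_G(Y^{[2]})$ is transported by the Weil homomorphism — which is natural and compatible with $\delta$ — to $\curv(\Theta^*\nabla) = \pi_2^*(\Theta^*B) - \pi_1^*(\Theta^*B)$, since the Weil homomorphism sends the equivariant curvature of an equivariant connection to the ordinary curvature of the descended connection \cite[Section 3]{Redden16b}. The multiplication $\mu$, being a connection-preserving $G$-equivariant isomorphism, descends to a connection-preserving isomorphism $\mu/G$; associativity descends as before. This yields $\Theta^*\cLh \= (Y/G, L/G, \nabla/G, \Theta^*B, \mu/G) \in \Grbc(X)$, and the same descent on morphisms gives the functor. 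The curvature formula $\curv(\Theta^*\cLh) = \Theta^*(\curv_G(\cLh))$ follows from the defining property \eqref{Eq:EquivCurv}: $\pi^*\curv_G(\cLh) = d_G B$, applying $\Theta^*$ and using that the Weil homomorphism commutes with the differentials $d_G$ and $d$ (cf. \eqref{Eq:EquivForms} and the discussion of $\O^n_G(M) \iso \O^n(\En G \times_G M)$), we get $\pi^* \Theta^* \curv_G(\cLh) = \Theta^* d_G B = d(\Theta^* B) = \pi^* \curv(\Theta^*\cLh)$, and injectivity of $\pi^*$ from \eqref{Eq:CoverSeq1} gives the claim.

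The main obstacle I anticipate is verifying that the curving condition and curvature identities are correctly transported by the Weil homomorphism at the level of the simplicial object $Y^{[\bullet]}$ — i.e., checking that $\Theta^*$ (the Weil map applied to each $Y^{[k]}$, using $\Theta$ pulled back from $P$) is a morphism of cochain complexes commuting with the simplicial differential $\delta$, so that the relations $\curv_G(\nabla) = \delta B$ and $d_G \curv_G = 0$ pass to their non-equivariant descended counterparts. This is essentially naturality of the Weil homomorphism in the manifold variable combined with its compatibility with $d_G$, both established in \cite{FH, Redden16a, Redden16b}, so the work is in carefully bookkeeping the pullbacks rather than proving anything genuinely new. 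A minor secondary point is that, strictly speaking, $L \in G\GBun(Y^{[2]})$ means a bundle over the $G$-manifold $Y^{[2]}$; since $P$ acts freely, so do $Y$ and all $Y^{[k]}$, so the naive quotient is well-behaved, but one should observe explicitly that $Y^{[k]}/G \iso (Y/G)^{[k]}$ as manifolds over $X$, which holds because $\pi\colon Y \to P$ is $G$-equivariant and $G$ acts freely on $P$.
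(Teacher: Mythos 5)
Your proposal is correct and follows essentially the same route as the paper: quotient all the data componentwise using that the free $G$-action on $P$ propagates to $Y$, $Y^{[k]}$, and $L$ (with $(Y/G)^{[k]}\iso Y^{[k]}/G$), apply the descent functor \eqref{Eq:Theta*} to $\nabla$ and the Weil homomorphism \eqref{Eq:WeilHomo} to $B$, and derive both the curving compatibility and the curvature identity from naturality of the Weil homomorphism together with injectivity of $\bar\pi^*$ for the quotient cover $Y/G\to X$. No substantive differences.
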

\begin{proof}
We first do the topological case, which is straightforward.  Let $\cL = (Y, L, \mu) \in G\GGrb(P)$.  Since $G$ acts freely on $P$, it also acts freely on $Y$, $Y^{[2]}$, and $L$.  Therefore, we have the following commutative diagram, 
\begin{equation} \label{Eq:QuotDiag} \begin{tikzcd}
L  \ar[r] \ar[d] & Y^{[2]} \ar[d] \ar[r, shift left, "\pi_1"] \ar[r, shift right, "\pi_2" '] \ar[d] &Y \ar[d] \ar[r, "\pi"] & P \ar[d]\\
L/G  \ar[r] & (Y^{[2]})/G \ar[r, shift left, "\bar{\pi}_1"] \ar[r, shift right, "\bar{\pi}_2" '] &Y/G  \ar[r, "\bar{\pi}"] & P/G=X
\end{tikzcd} \end{equation}
where each vertical map is a principal $G$-bundle.   

Because $G$ acts freely on $P$ and $Y$, there is a natural diffeomorphism 
\begin{equation}\label{Eq:QuotY2} (Y^{[2]})/G = (Y\times_P Y) /G  \iso (Y/G) \times_{P/G} (Y/G) = (Y/G)^{[2]}.\end{equation}
To see this, observe that the projection $Y \times_P Y \to (Y/G) \times_{P/G} (Y/G)$ fits into the commutative diagram
\[ \begin{tikzcd}[cramped, sep=tiny]
Y^{[2]} \ar[rr] \ar[dd] \ar[dr] && Y \ar[dd]  \ar[dr] \\
& (Y/G)^{[2]}  \ar[rr, crossing over] && Y/G \ar[dd] \\
Y \ar[rr] \ar[dr] && P \ar[dr] \\
&Y/G \ar[rr] \ar[from=uu, crossing over] && P/G ,
\end{tikzcd} \]
where the front and rear faces are pullbacks of surjective submersions, and the other four faces are  pullback squares of principal $G$-bundles.  This gives $Y^{[2]} \to (Y/G)^{[2]}$ the structure of a principal $G$-bundle and produces the desired diffeomorphism \eqref{Eq:QuotY2}. 

The original bundle gerbe multiplication $\mu$ is $G$-equivariant and naturally descends to a map on the quotients 
\[ \bar{\pi}_{12}^*(L/G) \otimes \bar{\pi}_{23}^*(L/G) \xrightarrow{\mu/G} \bar{\pi}_{13}^* (L/G),\]
where $\bar{\pi}_{ij}\colon (Y/G)^{[3]} \to (Y/G)^{[2]}$ denotes the induced map on the quotient manifolds.  Therefore, the bottom row of \eqref{Eq:QuotDiag} is the data of the bundle gerbe on $P/G=X$,
\[ \cL/G \= (Y/G, L/G, \mu/G) \in \Grb(X).\]
Similarly, any $G$-equivariant 1-morphism $\cK = (Z, K, \alpha)$ or 2-morphism $\cJ = (W,\beta)$ descends to an ordinary bundle gerbe 1-morphism $\cK/G \= (Z/G, K/G, \alpha/G)$ or 2-morphism $\cJ/G \= (W/G, \beta/G)$.  This establishes the desired functor $G\GGrb(P) \to \Grb(P/G)$ when $G$ acts freely on $P$.

For the second part, assume $(P,\Theta) \in \Bunc[G](X)$.  We define
\[ \Theta^*\cLh \= (Y/G, \Theta^*(L,\nabla), \Theta^*B, \mu/G) \in \Grbc(X), \]
where $\Theta^*(L,\nabla) \in \Bunc((Y/G)^{[2]})$ and $\Theta^*B \in \O^2(Y/G)$ are given  by \eqref{Eq:Theta*} and \eqref{Eq:WeilHomo}, respectively.  The compatibility between $\Theta^*B$ and $\Theta^*(L,\nabla)$, along with the calculation of $\curv(\Theta^*\cLh)$, follows directly from the naturality of the Weil homomorphism.  First,
\[ \curv\big( \Theta^* \nabla  \big) = \Theta^* (\curv_G(\nabla)) = \Theta^*(\pi_2^*B - \pi_1^* B) = \bar{\pi}_2^* (\Theta^*B) - \bar{\pi}_1^*(\Theta^* B) \in \O^2((Y/G)^{[2]}). \]
Second, since 
\[ \begin{tikzcd}
\O^2_G(Y) \ar[r, "\Theta^*"] \ar[d, "d_G" '] & \O^2(Y/G) \ar[d, "d"] \\
\O^3_G(Y) \ar[r, "\Theta^*"] & \O^3(Y/G), 
\end{tikzcd} \]
we immediately obtain
\[ \bar{\pi}^* \curv( \Theta^* \cLh) = d (\Theta^* B) = \Theta^* d_G B = \Theta^* (\pi^*\curv_G(\cLh)) = \bar{\pi}^*(\Theta^*\curv_G(\cLh)).\]
The map $\bar{\pi}^*$ is injective, therefore $\curv(\Theta^* \cLh) = \Theta^* \curv_G (\cLh)$.  

Similarly, any $G$-equivariant 1-morphism with connection $\cKh = (Z, K, \nabla^K, \alpha)$ is mapped to $\Theta^* \cKh \= (Z/G, \Theta^*(K,\nabla^K), \alpha/G)$.  For a 2-morphism, $\cJh$ contains no additional data past $\cJ$, it merely satisfies further requirements.
\end{proof}

There is now an obvious map 
\begin{align}\label{Eq:FundMap1}G\GGrbc(M) &\longrightarrow \Grbc(\En G \times_G M) \\
\cLh &\longmapsto \cEllh, \nonumber
\end{align}
given by the following.  For a moment, we use $\phi$ to denote the connection 1-form so as not to confuse between connection forms and covariant derivatives.  If $\cLh = (Y, L, \phi, B, \mu)$, then for $(P,\Theta, f) \in (\En G \times_G M)(X)$ use the composition 
\[ G\GGrbc(M) \xrightarrow{f^*} G\GGrbc(P) \xrightarrow{\Theta^*} \Grbc(X)\]
to define 
\begin{equation}\label{Eq:FundMap2} \cEllh (P,\Theta, f) \=  \Theta^*(f^* \cLh) \in \Grbc(X).
\end{equation}
Explicitly, this can be written as
\[  \cEllh (P,\Theta, f) = \big((f^*Y)/G,\> (f^*L)/G,\> (1-\iota_\Theta)f^*\phi,\> \Theta^*(f^*B),\> (f^*\mu)/G\big), \]
as seen through the diagram 
\[ \begin{tikzcd}(P,\Theta) \ar[r,"f"] \ar[d] & M \\ X \end{tikzcd}  \mapsto
 \begin{tikzcd}[row sep=small]
(f^*L,\> (1-\iota_{\Theta})f^*\phi) \ar[r] \ar[d] & (L,\>\phi) \ar[d] \\
f^* (Y^{[2]}) \ar[r] \ar[d, shift left] \ar[d, shift right]& Y^{[2]} \ar[d, shift left] \ar[d, shift right] \\
 f^* Y \ar[r] \ar[d] & Y \ar[d] \\ 
(P,\Theta) \ar[r, "f"] \ar[d] & M \\ X
\end{tikzcd} \mapsto 
\begin{tikzcd}[row sep=17]
(f^*L/G, \>(1-\iota_{\Theta})f^*\phi) \ar[d] \\
((f^*Y)/G)^{[2]} \ar[d, shift left] \ar[d, shift right] \\
 (f^* Y) /G \ar[d]  \\ 
X
\end{tikzcd} \]
Similarly, we define 
\begin{align}\label{Eq:FundMapNoConn}G\GGrb(M) &\longrightarrow \Grb(\E G \times_G M) \\
\cL &\longmapsto \cEll \nonumber
\end{align}
by $\cEll(P,f) \= (f^*\cL)/G \in \Grb(X)$ for $(P,f) \in (\E G \times_G M)(X)$.

Using the functors \eqref{Eq:FundMap1} or \eqref{Eq:FundMapNoConn}, along with Proposition \ref{Prop:H3GGrb}, we now define the {\it equivariant Dixmier--Douady class} $\DD_G(\cLh) = \DD_G(\cL) \in H^3_G(M;\Z)$ as the image of $\cLh$ or $\cL$ in the composition
\begin{equation}\label{EqDefn:DD} \pi_0 \left(G\GGrbc(M)\right) \to \pi_0 \left(G\GGrb(M)\right)  \to \pi_0 \left( \Grb(\E G\times_G M) \right) \overset{\iso}\to H^3_G(M;\Z).\end{equation}

Our present goal is to show the second map in \eqref{EqDefn:DD} is an isomorphism.  Fortunately, Tu--Xu  already proved that the homomorphism $\pi_0 (G\GGrb(M))  \to H^3_G(M;\Z)$ is surjective.  In  \cite[Theorem 2.8, Corollary 2.10]{TuXu15}, they show that for any class $\xi \in H^3_G(M;\Z)$, there exists some $\cL \in G\GGrb(M)$ with $\DD_G(\cL) = \xi$.

It therefore remains for us to show the homomorphism $\pi_0 (G\GGrb(M)) \to H^3_G(M;\Z)$ is injective, a fact that was previously noted but not proven in Remark 5.8 of \cite{Sti10}.  Our proof relies on better understanding the behavior of $\cL \mapsto \cEll$ and using this to show that $\pi_0(G\GGrb(M)) \to \pi_0(\Grb(\E G \times_G M))$ is an injection.

\begin{prop}\label{Prop:SimplicialGerbe}Let $\cL \in G\GGrb(M)$.  The image of $\cL$ in the composition
\[ G\GGrb(M) \to \Grb(\E G\times_G M) \to \Grb(M\sslash G)\]
is isomorphic to $G^\bullet \times \cL \in \Gpdinfty$, as defined in \eqref{Eq:SimplicialGerbe}.
\end{prop}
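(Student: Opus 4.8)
The plan is to unwind the composite $G\GGrb(M)\to\Grb(\E G\times_G M)\to\Grb(M\sslash G)$ on the tautological data and compare the result with \eqref{Eq:SimplicialGerbe}. By Remark \ref{Rem:ActionGpd}, $M\sslash G$ is the sheafification of the simplicial presheaf $G^\bullet\times M$, so a map $M\sslash G\to\B^2 S^1$ in $\Pre_\infty$ amounts to a coherent simplicial family of gerbes $\cG_k\in\Grb(G^k\times M)$ compatible with all the structure maps of the simplicial manifold $G^\bullet\times M$; restricting $\cEll$ along $M\sslash G\to\E G\times_G M$ produces such a family, and the task is to identify it with $G^\bullet\times\cL$. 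The essential input is the explicit description of the comparison map $M\sslash G\to\E G\times_G M$: over a test manifold $X$ it sends an object $\phi\colon X\to M$ of the action groupoid to the trivial bundle $X\times G\to X$ together with the $G$-equivariant map $f_\phi\colon X\times G\to M$, $(x,h)\mapsto h^{-1}\cdot\phi(x)$, and a morphism $g\colon X\to G$ to the bundle automorphism $(x,h)\mapsto(x,g(x)h)$.

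First I would evaluate at simplicial level $k$ on the tautological object over $X=G^k\times M$, namely the trivial bundle $G^k\times M\times G$ equipped with $f_k\big((g_1,\dots,g_k,m),h\big)=h^{-1}m$. By the definition of $\cEll$ in \eqref{Eq:FundMapNoConn} this gives $\cG_k=\cEll(G^k\times M\times G,f_k)=(f_k^*\cL)/G$. The $G$-action here is right translation on the last factor, hence free, and the unit section $s_k\colon G^k\times M\to G^k\times M\times G$, $p\mapsto(p,e)$, is a section of the quotient map. By the explicit construction of the quotient functor in Proposition \ref{Prop:Quotient}, the descended gerbe is recovered by pulling back along any such section, so $\cG_k\iso s_k^*f_k^*\cL=(f_k\circ s_k)^*\cL$; since $f_k\circ s_k=\operatorname{pr}_M$ this gives $\cG_k\iso\operatorname{pr}_M^*\cL=G^k\times\cL$. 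Concretely, writing $\cL=(Y,L,\mu)$, every $G$-orbit in $f_k^*Y$ (respectively in $f_k^*L$) contains a unique point lying over $h=e\in G$, which yields canonical diffeomorphisms $f_k^*Y/G\iso G^k\times Y$ and $f_k^*L/G\iso G^k\times L$ carrying $f_k^*\mu/G$ to $\Id\times\mu$, identifying $\cG_k$ with $G^k\times\cL$ on the nose.

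It then remains to check that these identifications $\cG_k\iso G^k\times\cL$ intertwine the simplicial structure maps of the restricted family with the maps $\widetilde d_i$ and $\widetilde s_i$ of \eqref{Eq:SimplicialGerbe}. Since $\cEll$ is a morphism of presheaves it commutes with all pullbacks, so $d_i^*\cG_k\iso\cEll\big(d_i^*(G^k\times M\times G,f_k)\big)$; and $d_i^*$ applied to the tautological datum over $G^k\times M$ is the tautological datum over $G^{k+1}\times M$, up to a bundle automorphism of the form $(x,h)\mapsto(x,g\cdot h)$ for those face maps that involve the $G$-action on $M$. Under $\cEll$ and the canonical identifications above, each such automorphism is realized by an honest diffeomorphism of covers compatible with the gerbe multiplication, and a direct computation shows it agrees with $\widetilde d_i$; degeneracies are handled the same way. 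Because the $\widetilde d_i$ are the ``strong'' isomorphisms of \eqref{Eq:SimpGerbeFullDiag}, in which every piece of structure is literally diffeomorphic, no higher coherence beyond these identifications is needed and the comparison is strict. The step I expect to be the main obstacle is this last one --- not conceptually, but as a bookkeeping matter: matching the boundary face maps $d_0$ and $d_k$, where the relevant bundle automorphism is translation by a coordinate group element, with the precise definitions of $\widetilde d_0$ and $\widetilde d_k$ while keeping consistent track of the left/right $G$-action conventions of Section \ref{Sec:Conventions}; everything else follows formally from naturality of $\cEll$ and freeness of the $G$-action on the trivial bundle.
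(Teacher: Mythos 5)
Your proposal is correct and follows essentially the same route as the paper: both present the map $M\sslash G\to\E G\times_G M$ via the tautological trivial $G$-bundle over each $G^k\times M$, use the canonical unit section to identify the quotient $(f_k^*\cL)/G$ with $G^k\times\cL$, and reduce the simplicial compatibility to tracking how that section interacts with the face maps (with the only nontrivial case being the face map involving the $G$-action, exactly the bookkeeping point you flag). The paper carries out that last computation explicitly, showing $d_i^*s_0=s_0$ for $i\geq 1$ and $(d_0^*s_0)(g_1,\dots,g_k,x)=(g_1^{-1},g_1,\dots,g_k,x)$, which yields $(\wt d_1)/G=d_0$; filling in that detail completes your argument.
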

\begin{proof}
The simplicial manifold $G^\bullet \times M = M\sslash G = N(G\times M \rightrightarrows M)$ has a  natural map to $\E G \times_G M$ induced by the trivial $G$-bundle.  First, consider $M \to \E G \times_G M$ defined by 
\[ \begin{tikzcd}
G\times M \ar[r, "d_1"] \ar[d, "d_0" '] & M  \\
M, \ar[u, dashed, bend right, "s_0" '] 
\end{tikzcd} \]
where we view $G \times M$ as a trivial principal (left) $G$-bundle over $M$, and $d_1\colon G \times M \to M$ as the $G$-equivariant map $(g,m)\mapsto g\cdot m$; here $G$ acts on $G\times M$ by left multiplication on $G$ only.  The bundle also has a natural section of $d_0$ given by the degeneracy map $s_0 \colon M \to G\times M$, since $d_0 s_0 = \Id$.  Thus, the value of $\cEll$ on $d_1$ is given by 
\[ \cEll(G\times M, d_1) = (d_1^* \cL)/G \iso s_0^* (d_1^* \cL) \iso (d_1 \o s_0)^* \cL \iso \cL \in \Grb(M).  \]

The remaining components of the natural map $M\sslash G \to \E G \times_G M$ can be seen through the following.
\begin{equation}\label{Eq:SimpAct} \begin{tikzcd} \cdots &[-20]G \times G \times G \times M \ar[d, "d_0" '] \ar[r, "d_2" description] \ar[r, shift left=1.5, "d_1"] \ar[r, shift right=1.5, "d_3" ']& G \times G \times M \ar[d,"d_0" '] \ar[r, shift left=.75, "d_1"] \ar[r, shift right=.75, "d_2" '] & G \times M \ar[r, "d_1"] \ar[d, "d_0" ']& M \\
\cdots &G \times G \times M  \ar[r, "d_1" description] \ar[r, shift left=1.5, "d_0"] \ar[r, shift right=1.5, "d_2" '] \ar[u, dashed, bend right]& G \times M \ar[r, shift left=.75, "d_0"] \ar[r, shift right=.75, "d_1" '] \ar[u, dashed, bend right]& M \ar[u, dashed, bend right, "s_0" ']
\end{tikzcd} \end{equation}
This diagram commutes because of the simplicial identities $d_0 d_j = d_{j-1} d_0$ for $j>0$, and hence it forms a simplicial $G$-bundle on $G^\bullet \times M$ with an equivariant map to $M$.

To compute $\cEll$ when evaluated on $M\sslash G$, note that we have a natural isomorphism between the pullback of $\cL$ in \eqref{Eq:SimpAct} and the following diagram.
\[ \begin{tikzcd}\cdots&[-20] G \times G \times G \times L \ar[d] \ar[r, "\wt{d}_2" description] \ar[r, shift left=1.5, "\wt{d}_1"] \ar[r, shift right=1.5, "\wt{d}_3" ']& G \times G \times L \ar[d] \ar[r, shift left=.75, "\wt{d}_1"] \ar[r, shift right=.75, "\wt{d}_2" '] & G \times L \ar[r, "\wt{d}_1"] \ar[d]& L \ar[d, "\pi" ] \\
\cdots & G \times G \times G \times Y^{[2]} \ar[d, shift left=.75] \ar[d, shift right=.75] \ar[r, "\wt{d}_2" description] \ar[r, shift left=1.5, "\wt{d}_1"] \ar[r, shift right=1.5, "\wt{d}_3" ']& G \times G \times Y^{[2]} \ar[d, shift left=.75] \ar[d, shift right=.75] \ar[r, shift left=.75, "\wt{d}_1"] \ar[r, shift right=.75, "\wt{d}_2" '] & G \times Y^{[2]} \ar[r, "\wt{d}_1"] \ar[d, shift left=.75] \ar[d, shift right=.75] & Y^{[2]} \ar[d, shift left=.75] \ar[d, shift right=.75] \\
\cdots & G \times G \times G \times Y \ar[d] \ar[r, "\wt{d}_2" description] \ar[r, shift left=1.5, "\wt{d}_1"] \ar[r, shift right=1.5, "\wt{d}_3" ']& G \times G \times Y \ar[d] \ar[r, shift left=.75, "\wt{d}_1"] \ar[r, shift right=.75, "\wt{d}_2" '] & G \times Y \ar[r, "\wt{d}_1"] \ar[d]& Y \ar[d, "\pi" ] \\
\cdots & G \times G \times G \times M \ar[d, "d_0" '] \ar[r, "d_2" description] \ar[r, shift left=1.5, "d_1"] \ar[r, shift right=1.5, "d_3" ']& G \times G \times M \ar[d,"d_0" '] \ar[r, shift left=.75, "d_1"] \ar[r, shift right=.75, "d_2" '] & G \times M \ar[r, "d_1"] \ar[d, "d_0" ']& M \\
\cdots & G \times G \times M  \ar[r, "d_1" description] \ar[r, shift left=1.5, "d_0"] \ar[r, shift right=1.5, "d_2" '] & G \times M \ar[r, shift left=.75, "d_0"] \ar[r, shift right=.75, "d_1" ']  & M 
\end{tikzcd} \]

Furthermore, the section $s_0$ pulls back along each map in the following way:  $d_i^* s_0 = s_0$ for $i \geq 1$ because $d_{i+1} s_0 = s_0 d_i$.  This gives a natural identification $\wt{d}_{i+1}/G \iso \wt{d_i}$ for $i \geq 1$.  For $i=0$, we have that  $(d_0^* s_0)(g_1, g_2, \ldots g_k,x) = (g_1^{-1}, g_1, g_2, \ldots g_k, x)$.  Therefore, 
\[ (\wt{d}_{1}/G) (g_1, \ldots, g_k, x) = d_0\wt{d}_{1}(g_1^{-1}, g_1, \ldots, g_k, x) = d_0(1, g_2, \ldots, g_k, x) = (g_2, \ldots g_k, x),\]
giving $(\wt{d}_1)/G = d_0$.  Hence, when one quotients the above diagram by $G$, the result is a diagram that is naturally isomorphic to the diagram \eqref{Eq:SimpGerbeFullDiag} defining $G^\bullet \times \cL$.  
\end{proof}

\subsection{Proof of equivalence}
\begin{prop}\label{Prop:Injection}The group homomorphism $\pi_0 (G\GGrb(M))  \to \pi_0 ( \Grb(\E G\times_G M))$ is an injection.
\end{prop}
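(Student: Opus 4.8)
The plan is to first reduce to a triviality statement, then descend to the simplicial manifold $M\sslash G$ via Proposition \ref{Prop:SimplicialGerbe}, and finally recognize a trivialization of the simplicial gerbe $G^\bullet\times\cL$ as a genuine $G$-equivariant trivialization of $\cL$. For the reduction: the assignment $\cL\mapsto\cEll$ of \eqref{Eq:FundMapNoConn} is a symmetric monoidal functor, being the composite of the monoidal operations $f^*$ and $(-)/G$ of Proposition \ref{Prop:Quotient}, and $\pi_0(G\GGrb(M))$ is a group under $\otimes$. Hence, replacing $\cL_1,\cL_2$ by $\cL_1\otimes\cL_2^{-1}$, it suffices to show: if $\cEll$ is isomorphic to the trivial gerbe in $\Grb(\E G\times_G M)$, then $\cL\iso\cI$ in $G\GGrb(M)$.

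Suppose then that $\cEll$ is trivial. Precomposing a trivialization with the canonical map $u\colon M\sslash G\to\E G\times_G M$ from the proof of Proposition \ref{Prop:SimplicialGerbe}, and applying that proposition, we obtain a trivialization of the simplicial gerbe $G^\bullet\times\cL$ inside
\[ \Grb(M\sslash G)=\Pre_\infty(M\sslash G,\B^2 S^1)\simeq\holim_\Delta\Grb(G^\bullet\times M), \]
the equivalence coming from $M\sslash G\simeq\hocolim_{\Delta^\op}G^\bullet\times M$ and the Yoneda lemma. Unpacking a morphism $\1\to G^\bullet\times\cL$ in this homotopy limit in the standard way (as in descent for $2$-stacks) yields: a $1$-morphism $t\colon\cI\to\cL$ in $\Grb(M)$ over $G^0\times M$, a $2$-morphism over $G\times M$, and a single coherence condition over $G^2\times M$ — there is no further data, since the hom-groupoids of $\Grb(-)$ are $1$-truncated.

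It remains to upgrade $t$ to a $G$-equivariant trivialization. By Proposition \ref{Prop:StableIso} (the non-equivariant case being Waldorf's \cite{Waldorf-More}) we may take $t$ to be a fiber-product morphism, i.e. a pair $(K,\upalpha)$ with $K\in\Bun(Y)$ and $\upalpha\colon\delta K\xrightarrow{\iso}L$ satisfying $\upmu\circ\delta\upalpha=\Id$ under $\delta\delta K\iso 1$, exactly as in Section \ref{Subsec:IsoClasses}. Because the face maps of $G^\bullet\times\cL$ are honest diffeomorphisms built from the $G$-action on $(Y,L,\mu)$ (end of Section \ref{Sec:Defn}), the $2$-morphism over $G\times M$ — again taken in fiber-product form — is precisely an isomorphism $\sigma\colon d_0^*K\xrightarrow{\iso}d_1^*K$ of $S^1$-bundles on $G\times Y$ compatible with $\upalpha$, that is, a $G$-equivariant structure on $K\to Y$ relative to which $\upalpha$ becomes $G$-equivariant; the coherence condition over $G^2\times M$ is exactly the cocycle/associativity identity for $\sigma$. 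Hence $(K,\upalpha)\in G\GBun(Y)$ together with the equivariant $\upalpha$ is the data of a fiber-product trivialization $\cI\to\cL$ in $G\GGrb(M)$, so $\cL\iso\cI$, as desired.

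The main obstacle is this last identification: matching the $C^1$- and $C^2$-components of a morphism in $\holim_\Delta\Grb(G^\bullet\times M)$ with an honest equivariant structure requires carefully tracking Waldorf's $2$-categorical coherences and the auxiliary covers implicit in $1$- and $2$-morphisms of bundle gerbes. What makes the strictification go through is that $G^\bullet\times\cL$ is unusually rigid — its face maps are diffeomorphisms rather than mere stable isomorphisms (end of Section \ref{Sec:Defn}). One could alternatively package the same computation as a proof that $G\GGrb(M)\to\Grb(M\sslash G)$, $\cL\mapsto G^\bullet\times\cL$, is fully faithful on hom-groupoids, which by itself already forces injectivity on $\pi_0$.
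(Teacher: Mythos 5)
Your proof is correct and follows essentially the same route as the paper: evaluate the trivialization of $\cEll$ on $M\sslash G$ via Proposition \ref{Prop:SimplicialGerbe} to obtain a non-equivariant fiber-product trivialization $(K,\upalpha)$ of $\cL$, then use its extension over $G^\bullet\times M$ to endow $K$ with a $G$-equivariant structure for which $\upalpha$ is equivariant. The only difference is one of packaging: where you unpack the descent data by hand (the isomorphism $\sigma\colon d_0^*K\to d_1^*K$ over $G\times Y$ and the cocycle condition over $G^2\times Y$), the paper invokes the equivalence $\Bun(Y\sslash G)\iso G\GBun(Y)$ from \cite{Redden16b} as a black box.
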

\begin{proof}
Consider an arbitrary $\cL =(Y, L, \mu) \in G\GGrb(M)$ with induced $\cEll \in \Grb( \E G \times_G M)$.  Suppose there is an isomorphism $\mathscr{K}\colon \mathscr{I}\to \cEll$, where $\mathscr{I}$ is the image of the trivial gerbe $\cI$.  Evaluating $\mathscr{K}$ on the natural map $M \to \E G \times_G M$,  described in Proposition \ref{Prop:SimplicialGerbe}, gives an ordinary non-equivariant trivialization 
\[ \cI \iso \mathscr{I}(G\times M, d_1) \xrightarrow[\iso]{\cK \= \mathscr{K}(G\times M, d_1)} \cEll(G\times M, d_1) \iso \cL  \]
in $\Grb(M)$.
As discussed in Section \ref{Subsec:IsoClasses}, we may assume $\cK$ is given by $K \in \Bun(Y)$ with an isomorphism $\upalpha \colon \delta K \to L$, which is a priori non-equivariant.  But,  $\cK$ extends to a trivialization on $\mathscr{L}(G^\bullet \times M) \iso G^\bullet \times \cL$.  Hence, the bundle $K \in \Bun(Y)$ extends to a bundle on $\Bun(Y\sslash G)$.   The well-known equivalence $\Bun(Y\sslash G) \iso G\GBun(Y)$, proven in \cite[Theorem 4.3]{Redden16b}, therefore implies that $K \in G\GBun(Y)$.  Since the isomorphism $\upalpha$ also extends to $G^\bullet \times \cL$, this implies that $\upalpha$ is equivariant with respect to the induced induced $G$-structure on $K$.  Therefore, our morphism $\cK$ is $G$-equivariant and provides a trivialization of $\cL \in G\GGrb(M)$.
\end{proof}

\begin{prop}\label{Prop:Iso}The equivariant Dixmier--Douady class gives a natural isomorphism of abelian groups 
\[ \DD_G \colon \pi_0 \left(G\GGrb(M)\right) \xrightarrow{\iso} \pi_0 \left( \Grb(\E G \times_G M) \right) \xrightarrow{\iso} H^3_G(M;\Z).\]
\end{prop}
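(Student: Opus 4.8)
The plan is to assemble the three facts already in hand. By Proposition~\ref{Prop:H3GGrb} the second arrow $\pi_0(\Grb(\E G \times_G M)) \to H^3_G(M;\Z)$ is an isomorphism of abelian groups, so it suffices to analyze the first arrow
\[ \iota\colon \pi_0\left(G\GGrb(M)\right) \longrightarrow \pi_0\left(\Grb(\E G \times_G M)\right), \qquad [\cL] \longmapsto [\cEll]. \]
Proposition~\ref{Prop:Injection} shows $\iota$ is injective, and it is a group homomorphism, so the composite $\DD_G$ of \eqref{EqDefn:DD} is an injective homomorphism of abelian groups. The only thing left to prove is surjectivity.

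For surjectivity I would invoke the theorem of Tu--Xu: by \cite[Theorem 2.8, Corollary 2.10]{TuXu15}, every class $\xi \in H^3_G(M;\Z)$ is realized as the equivariant Dixmier--Douady class of some $\cL \in G\GGrb(M)$. One must check that the invariant used there agrees with our $\DD_G$ from \eqref{EqDefn:DD}; this is immediate, since both are characterized by pulling $\cL$ back along $\E G \times_G M$ and applying the identification of Proposition~\ref{Prop:H3GGrb}. Hence $\DD_G$ is surjective. Now $\DD_G$ factors as the injection $\iota$ followed by the isomorphism of Proposition~\ref{Prop:H3GGrb}; surjectivity of a composite forces surjectivity of the final map and, because the second factor is injective, also of $\iota$. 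Therefore $\iota$ is a bijective homomorphism, i.e.\ an isomorphism of abelian groups, and consequently so is $\DD_G$. Naturality in $M$ follows from the naturality of each constituent map in \eqref{EqDefn:DD}.

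I expect essentially no obstacle beyond bookkeeping: the substantive input is the Tu--Xu existence result, and the rest is the elementary observation that an injection whose composite with an isomorphism is surjective is itself bijective. The one point worth stating explicitly is the compatibility of our $\DD_G$, defined via the quotient-stack functor $\cL \mapsto \cEll$ of \eqref{Eq:FundMapNoConn}, with the equivariant Dixmier--Douady class that Tu--Xu prove to be surjective; Proposition~\ref{Prop:SimplicialGerbe} and the description of $\cEll$ on $M\sslash G$ make this identification transparent.
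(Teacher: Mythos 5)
Your proposal is correct and follows exactly the paper's argument: Proposition~\ref{Prop:H3GGrb} handles the second map, the Tu--Xu surjectivity result gives surjectivity of the composite (hence of the first map, since the second is a bijection), and Proposition~\ref{Prop:Injection} supplies injectivity. The only difference is that you spell out the compatibility of $\DD_G$ with the Tu--Xu invariant, which the paper leaves implicit.
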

\begin{proof}
Proposition \ref{Prop:H3GGrb} says the second map is an isomorphism. Theorem 2.8 of \cite{TuXu15} says the composition is surjective, and hence the first map is a surjection. Proposition \ref{Prop:Injection} says the first map is an injection.  Hence, the first map must also be an isomorphism.
\end{proof}

\begin{thm}\label{Thm:IsoClasses}Let $G$ be a compact Lie group.  For $M \in G\GMan$, there is a natural isomorphism of abelian groups
\[  \pi_0 \left(G\GGrbc(M) \right) \iso \HG^3(M) \]
between $\{$equivalence classes of $G$-equivariant gerbes with connection on $M\}$ and the third differential equivariant cohomology group of $M$.  Furthermore, the character diagram 
\[ \xymatrix@R=3mm@C=6mm{ 0 \ar[dr]&&&&0 \\
&H_G^{2}(M;\R/\Z) \ar[dr] \ar^{-B}[rr]&& H_G^{3}(M;\Z) \ar[ur]  \ar[dr]\\
H_G^{2}(M;\R) \ar[ur] \ar[dr]&& \HG^{3}(M) \ar[ur] \ar[dr] && H_G^{3}(M;\R)\\
& {\displaystyle \frac{\Omega^{2}_G(M)}{\Omega^{2}_G(M)_\Z}} \ar[ur] \ar_{d_G}[rr]&& \Omega^{3}_G(M)_\Z \ar[ur] \ar[dr]\\
0 \ar[ur]&&&&0,
} \] 
in which the two diagonals are both short exact sequences, is isomorphic to the following character diagram.
\[ \xymatrix@R=3mm@C=4mm{ 0 \ar[dr]&&&&0 \\
&\pi_0\left(G\GGrbc(M)_{\fl}\right) \ar[dr] \ar[rr]&& \pi_0(G\GGrb(M)) \ar[ur]  \ar[dr]\\
H_G^{2}(M;\R) \ar[ur] \ar[dr]&& \pi_0\left(G\GGrbc(M)\right) \ar[ur]_{\DD_G} \ar[dr]^{\curv_G} && H_G^{3}(M;\R) \\
& \pi_0\left( G\GGrbtrivc(M) \right) \ar^{\cI_\bullet}[ur] \ar[rr]&& \Omega^{3}_G(M)_\Z \ar[ur] \ar[dr]\\
0 \ar[ur]&&&&0
} \] 
\end{thm}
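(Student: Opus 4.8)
The plan is to produce a natural isomorphism of abelian groups $\Phi\colon\pi_0(G\GGrbc(M))\iso\HG^3(M)$ by taking $\pi_0$ of the functor \eqref{Eq:FundMap1} and composing with the isomorphism $\pi_0(\Grbc(\En G\times_G M))\iso\HG^3(M)$ of Proposition \ref{Prop:H3GGrb}, and then to read off the entire character diagram from the rigidity of the differential cohomology hexagon. That $\Phi$ is a group homomorphism follows from the symmetric monoidal structure on $G\GGrbc(M)$ recorded around \eqref{Eq:Automorphisms}, since $\cLh\mapsto\cEllh$ is monoidal and the tensor product induces addition on $\pi_0(\Grbc(\En G\times_G M))\iso\HG^3(M)$; naturality in $M$ is immediate from the constructions.

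To see that $\Phi$ is bijective I would apply the short five lemma to the map of short exact sequences whose top row is the sequence of Proposition \ref{Prop:StienonProperties},
\[ 0\longrightarrow\frac{\O^2_G(M)}{\O^2_G(M)_\Z}\xrightarrow{\ \cIh_\bullet\ }\pi_0(G\GGrbc(M))\longrightarrow\pi_0(G\GGrb(M))\longrightarrow 0,\]
whose bottom row is \eqref{ses2}, with left vertical map the identity, middle map $\Phi$, and right vertical map the isomorphism $\pi_0(G\GGrb(M))\iso H^3_G(M;\Z)$ of Proposition \ref{Prop:Iso} (i.e.\ the equivariant Dixmier--Douady class of \eqref{EqDefn:DD}). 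Two squares must be checked to commute. The right square: by inspection of \eqref{Eq:FundMap1} and \eqref{Eq:FundMapNoConn}, forgetting the gerbe connection is compatible with forgetting the bundle connection, so the forgetful functor corresponds to the map $\Grbc(\En G\times_G M)\to\Grb(\E G\times_G M)$ induced by $\BBn S^1\to\B^2 S^1$ and $\En G\times_G M\to\E G\times_G M$, which under Propositions \ref{Prop:H3GGrb} and \ref{Prop:Iso} is precisely the characteristic class map $\HG^3(M)\to H^3_G(M;\Z)$ of the hexagon. The left square: evaluating $\cEllh$ on $\cIh_B$ gives, by the explicit formula following \eqref{Eq:FundMap2} together with Proposition \ref{Prop:Quotient}(2), the trivial gerbe on $\En G\times_G M$ with curving $2$-form the image of $B$ under the isomorphism $\O^2_G(M)\iso\O^2(\En G\times_G M)$ of \eqref{Eq:EquivForms}, which under Proposition \ref{Prop:H3GGrb} represents the class of $B$ under the inclusion $\O^2_G(M)/\O^2_G(M)_\Z\hookrightarrow\HG^3(M)$ of \eqref{ses2}. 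Granting these, the five lemma shows $\Phi$ is an isomorphism; note that surjectivity of $\Phi$ uses, through Proposition \ref{Prop:Iso}, the Tu--Xu surjectivity result.

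With $\Phi$ in hand the remainder is bookkeeping. Proposition \ref{Prop:Quotient}(2) together with \eqref{Eq:EquivForms} shows that $\curv_G(\cLh)\in\O^3_G(M)$ is sent to the curvature of $\cEllh$, so $\curv_G(\cLh)$ equals the image of $\Phi(\cLh)$ under $\HG^3(M)\to\O^3_G(M)_\Z$; in particular $\curv_G$ lands in the integral forms and $\Phi$ intertwines $\curv_G$ with the curvature map of the hexagon. Since the abstract hexagon for $\HG^3(M)$ is determined by $\HG^3(M)$ together with its two maps to $H^3_G(M;\Z)$ and $\O^3_G(M)_\Z$, the isomorphism $\Phi$, being compatible with both, induces an isomorphism of hexagons. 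On the geometric side, $\pi_0(G\GGrbtrivc(M))\iso\O^2_G(M)/\O^2_G(M)_\Z$ by Proposition \ref{Prop:Grbtrivc}, and its inclusion into $\pi_0(G\GGrbc(M))$ is $\cIh_\bullet$, which the left square identifies with the inclusion from \eqref{ses2}; and since $\curv_G$ is well defined on $\pi_0$ by Proposition \ref{Prop:Curvature}, the subgroup $\pi_0(G\GGrbc(M)_{\fl})$ is exactly $\ker(\curv_G)$, which $\Phi$ carries isomorphically onto $\ker(\HG^3(M)\to\O^3_G(M)_\Z)=H^2_G(M;\R/\Z)$ by \eqref{ses1}. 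The two remaining corners $H^2_G(M;\R)$ and $H^3_G(M;\R)$ and every remaining arrow then match by rigidity of the hexagon and naturality of all the constructions; in particular the forgetful arrow $\pi_0(G\GGrbc(M)_{\fl})\to\pi_0(G\GGrb(M))$ corresponds, up to sign, to the Bockstein.

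The main obstacle is the commutativity of the two squares in the five-lemma step: checking that the isomorphism of Proposition \ref{Prop:H3GGrb} intertwines $\cIh_\bullet$ with the hexagon inclusion and intertwines the connection-forgetting map with the topological comparison of Proposition \ref{Prop:Iso}. Both reduce to unwinding how the descent functor $\Theta^*$ and the Weil homomorphism interact with the sheaf models $\BBn S^1\simeq\Kh(\Z,3)$ and $\B^2 S^1\simeq\L(\DK(S^1\to 0\to 0))$; once the computation of $\cEllh(\cIh_B)$ as a trivial gerbe with prescribed curving and of $\curv(\cEllh)$ via \eqref{Eq:EquivForms} are carried out, everything else is routine diagram-chasing.
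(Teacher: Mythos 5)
Your proposal is correct and follows essentially the same route as the paper: both apply the five lemma to the map from the short exact sequence of Proposition \ref{Prop:StienonProperties} to \eqref{ses2}, using Propositions \ref{Prop:H3GGrb} and \ref{Prop:Iso} for the outer isomorphisms, and then verify curvature compatibility via Proposition \ref{Prop:Quotient} and \eqref{Eq:EquivForms} before identifying the flat part with $H^2_G(M;\R/\Z)$ through \eqref{ses1}. Your explicit check that $\cIh_B$ lands on the class of $B$ (the left square) is a detail the paper leaves implicit, but otherwise the arguments coincide.
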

\begin{proof}The theorem follows from Proposition \ref{Prop:StienonProperties}, Proposition \ref{Prop:Iso},  and the five lemma.  Consider the commutative diagram
\[ \begin{tikzcd} G\GGrbc(M) \ar[r]  \ar[d] & G\GGrb(M) \ar[d] \\
\Grbc(\En G \times_G M) \ar[r] & \Grb( \E G \times_G M)
\end{tikzcd} \]
where the vertical maps were defined in \eqref{Eq:FundMap1} and \eqref{Eq:FundMapNoConn}, and the horizontal maps are given by forgetting connection data.  Taking isomorphism classes gives a homomorphism of short exact sequences
\[ \begin{tikzcd}[row sep=4, column sep=15]
0 \ar[r] & \pi_0(G\GGrbtrivc(M)) \ar[r] \ar[ddd, "\iso"]& \pi_0 (G\GGrbc(M)) \ar[r, "{\DD_G}"] \ar[dd]& \pi_0 (G\GGrb(M)) \ar[dd, "\iso"] \ar[r] & 0 \\ \ \\
&&\pi_0\Grbc(\En G \times_G M) \ar[r] \ar[d, phantom, "\iso" rotate=-90] & \pi_0 \Grb(\E G \times_G M) \ar[d, phantom, "\iso" rotate=-90] \\
0 \ar[r] & \frac{\O^2_G(M)}{\O^2_G(M)_\Z} \ar[r]  & \HG^3(M) \ar[r] & H^3_G(M;\Z) \ar[r]& 0.
\end{tikzcd} \]
The upper sequence was proven to be exact in Proposition \ref{Prop:StienonProperties}; the lower two vertical isomorphisms were established in Proposition \ref{Prop:H3GGrb}; the upper-right vertical isomorphism was proven in Proposition \ref{Prop:Iso}; and the lower short exact sequence is \eqref{ses2}.  Therefore, the five lemma implies the homomorphism $\pi_0(G\GGrbc(M)) \to \HG^3(M)$ is an isomorphism.  This also shows that the positively-sloped diagonals in the two character diagrams are isomorphic.

It remains to verify that the two curvature maps are compatible, in the sense that the diagram
\[ \begin{tikzcd}
 \cLh \ar[r, phantom, "\ni"] \ar[d, mapsto]& G\GGrbc(M) \ar[r, "{\curv_G}"]  \ar[d] & \O^3_G(M) \ar[d, "\iso"] \\
\cEllh \ar[r, phantom, "\ni"] & \Grbc(\En G \times_G M) \ar[r, "\curv"] & \O^3( \En G \times_G M)
\end{tikzcd} \]
commutes.  To determine $\curv(\cEllh)\in \O^3(\En G \times_G M)$, it suffices to evaluate on all possible $(P,\Theta, f) \colon X \to \En G \times_G M$.  Using the definition of $\cEllh$ in \eqref{Eq:FundMap2}, along with Proposition \ref{Prop:Quotient}, we see that
\[ \curv( \cEllh(P,\Theta,f)) = \curv(\Theta^*( f^* \cLh)) = \Theta^* (\curv_G(f^*\cLh)) =\Theta^*(f^*(\curv_G(\cLh)) \in \O^3(X). \]
This is precisely the image of $\curv_G(\cLh)$ in $\O^3(\En G \times_G M)$ when evaluated on $(P,\Theta, f)$, as defined in \eqref{Eq:EquivForms}.

Finally, note that $G\GGrbc(M)_{\fl}$ is defined as the full subgroupoid of $G\GGrbc(M)$ whose objects have curvature 0.  Thus, the sequence 
\[ 0 \to \pi_0(G\GGrbc(M))_{\fl} \to \pi_0(G\GGrbc(M)) \xrightarrow{\curv_G} \O^3(M)_\Z \to 0\]
is exact, and we use the existing short exact sequence \eqref{ses1} to define the isomorphism 
\[ H^2_G(M;\R/\Z) \xrightarrow{\iso} \pi_0(G\GGrbc(M)_{\fl}). \]
Therefore, the isomorphism $\pi_0(G\GGrbc(M)) \xrightarrow{\iso} \HG^3(M)$ induces an isomorphism between the two character diagrams.
\end{proof}

\begin{thm}\label{Thm:Equiv2Gpd}Let $G$ be a compact Lie group and $M \in G\GMan$.  The natural functors
\begin{align*} G\GGrb(M) &\overset{\simeq}\longrightarrow \Grb(\E G \times_G M), \\
 G\GGrbc(M) &\overset{\simeq}\longrightarrow \Grbc(\En G \times_G M),\end{align*}
defined in \eqref{Eq:FundMap1} and \eqref{Eq:FundMapNoConn}, are both equivalences of 2-groupoids in $\Gpdinfty$.
\end{thm}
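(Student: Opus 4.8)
The plan is to apply the criterion from Section~\ref{Sec:Conventions}: a functor of $2$-groupoids is an equivalence precisely when it is a bijection on isomorphism classes of objects and induces an equivalence of automorphism groupoids $\Aut(x)\xrightarrow{\simeq}\Aut(\varphi(x))$ for every object $x$. The first condition is already established. Proposition~\ref{Prop:Iso} shows that the functor \eqref{Eq:FundMapNoConn} induces the isomorphism $\pi_0(G\GGrb(M))\iso\pi_0(\Grb(\E G\times_G M))$, and the proof of Theorem~\ref{Thm:IsoClasses} together with Proposition~\ref{Prop:H3GGrb} shows that \eqref{Eq:FundMap1} induces $\pi_0(G\GGrbc(M))\iso\pi_0(\Grbc(\En G\times_G M))$. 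So only the automorphism groupoids remain; I would treat the case with connections first and then repeat the argument with all flatness conditions deleted.

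Next I would reduce to the trivial object. Since both $G\GGrbc(M)$ and $\Grbc(\En G\times_G M)=\Shv_\infty(\En G\times_G M,\BBn S^1)$ are symmetric monoidal $2$-groupoids in which every object is invertible, and the functor $F$ of \eqref{Eq:FundMap1} is monoidal, the adjunction \eqref{Eq:Automorphisms} and its analogue on the stack side yield compatible identifications $\Aut(\cLh)\simeq\Aut(\cIh_0)$ and $\Aut(F\cLh)\simeq\Aut(F\cIh_0)$; hence it suffices to show that $\Aut(\cIh_0)\to\Aut(F\cIh_0)$ is an equivalence. By Corollary~\ref{Cor:Aut}, $\Aut_{G\GGrbc(M)}(\cIh_0)\simeq G\GBunc(M)_\fl$. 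On the other side, $F\cIh_0$ is the trivial gerbe connection on $\En G\times_G M$, and using the non-equivariant case of Corollary~\ref{Cor:Aut} — automorphisms of the trivial gerbe connection on each test manifold $X$ form the groupoid of flat $S^1$-bundles with connection on $X$, naturally in $X$ — together with the fact that $\Grbc$ is represented by the $2$-stack $\BBn S^1$, one identifies $\Aut_{\Grbc(\En G\times_G M)}(F\cIh_0)$ with $\Bunc[S^1](\En G\times_G M)_\fl$, the full subgroupoid of $\Bunc[S^1](\En G\times_G M)=\Shv_\infty(\En G\times_G M,\Bn S^1)$ on objects of vanishing curvature.

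Then I would invoke the equivalence \eqref{Eq:EquivConns} of \cite{Redden16b}, namely $G\GBunc[S^1](M)\xrightarrow{\simeq}\Bunc[S^1](\En G\times_G M)$, and observe that it restricts to flat parts: by the Chern--Weil compatibility of Example~\ref{Ex:EquivConns} together with the isomorphism \eqref{Eq:EquivForms}, this equivalence carries $\curv_G$ to the curvature $2$-form on $\En G\times_G M$, so $\curv_G(\nabla)=0$ corresponds exactly to the associated map factoring through the substack of flat connections. This gives $G\GBunc(M)_\fl\xrightarrow{\simeq}\Bunc[S^1](\En G\times_G M)_\fl$. Finally I would check that $F$, restricted to the automorphisms of $\cIh_0$, realizes this very equivalence: by Proposition~\ref{Prop:StableIso} a morphism $\cIh_0\to\cIh_0$ is a bundle $(K,\nabla)\in G\GBunc(M)$ with $\curv_G(\nabla)=0$, and by the explicit formula \eqref{Eq:FundMap2} the functor $F$ sends it to the automorphism of $F\cIh_0$ whose value on $(P,\Theta,f)\in(\En G\times_G M)(X)$ is $\Theta^*(f^*(K,\nabla))$, which is precisely \eqref{Eq:ConnsQuot}. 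Hence $\Aut(\cIh_0)\to\Aut(F\cIh_0)$ is an equivalence, and combined with the $\pi_0$ statement this proves the theorem for connections. For $G\GGrb(M)\to\Grb(\E G\times_G M)$ one repeats the argument verbatim, with Corollary~\ref{Cor:Aut} giving $\Aut(\cI)\simeq G\GBun(M)$, with no curvature constraint appearing (so no flatness conditions), and with the equivalence $G\GBun[S^1](M)\simeq\Bun[S^1](\E G\times_G M)$ in place of \eqref{Eq:EquivConns}.

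The step I expect to be the main obstacle is the compatibility in the last paragraph: pinning down that the monoidal functor $F$ is genuinely compatible with the two reductions $\Aut(-)\simeq\Aut(\text{trivial})$, and that $\Aut$ of the trivial object on the stack side is correctly computed as a mapping space (homotopy limit) into the stack of flat $S^1$-connections, so that the comparison collapses to the already-known equivalence \eqref{Eq:EquivConns}. Once those identifications are made precise, the rest is a direct consequence of Theorem~\ref{Thm:IsoClasses}, Corollary~\ref{Cor:Aut}, \cite{Waldorf-More}, and \cite{Redden16b}.
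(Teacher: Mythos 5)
Your proposal is correct and follows essentially the same route as the paper's proof: bijectivity on $\pi_0$ from Proposition \ref{Prop:Iso} and Theorem \ref{Thm:IsoClasses}, reduction of automorphism groupoids to the trivial object via the Picard structure \eqref{Eq:Automorphisms}, the identification $\Aut(\cIh_0)\simeq G\GBunc(M)_{\fl}$ from Corollary \ref{Cor:Aut}, the identification of the stack-side automorphisms with $\Bunc(\En G\times_G M)_{\fl}$ (which the paper phrases via $\L(\DK(S^1\to\O^1_{\cl}))$ rather than your levelwise argument, but these amount to the same loop-space computation), and the restriction of the equivalence of \cite{Redden16b} to flat subgroupoids. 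Your explicit check that $F$ on automorphisms of $\cIh_0$ literally realizes \eqref{Eq:ConnsQuot} is a detail the paper leaves implicit, not a divergence.
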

\begin{proof}
Proposition \ref{Prop:Iso} and Theorem \ref{Thm:IsoClasses} show that both functors are bijections on isomorphism classes of objects.  As explained in Section \ref{SEC.Sheaves.of.infty.groupoids}, we must now show that both functors induce equivalences between all automorphism groupoids.  As explained in \eqref{Eq:Automorphisms}, the Picard structures naturally identify  automorphisms of a general object with  automorphisms for the trivial object, which greatly simplifies the remaining task.

Corollary \ref{Cor:Aut} gives an equivalence $\Aut(\cL) \simeq G\GBun(M)$ for any $\cL \in G\GGrb(M)$.  Similarly, the automorphism groupoid of any $\cEll \in \Grb(\E G\times_G M)$ is  equivalent to 
\[ \Aut ( \mathscr{I}) \simeq \Shv_\infty \left( \E G \times_G M,  \L(\DK(S^1 \to 0)) \right) \simeq \Shv_\infty( \E G \times_G M, \B S^1) = \Bun (\E G \times_G M). \]
It is a well-known result that the induced functor $G\GBun(M) \xrightarrow{\iso} \Bun(\E G \times_G M)$ is an equivalence of categories \cite[Theorem 4.3]{Redden16b}.  Therefore, we may conclude   that $G\GGrb(M) \xrightarrow{\simeq} \Grb( \E G \times_G M)$ is an equivalence.

Corollary \ref{Cor:Aut} also shows  $\Aut(\cLh) \simeq G\GBunc(M)_{\fl}$ for $\cLh \in G\GGrbc(M)$.  For $\cEllh \in \Grbc(\En G\times_G M)$, 
\[ \Aut(\cEllh)  \iso \Aut ( \widehat{\mathscr{I}}) \simeq \Shv_\infty \left( \En G \times_G M,  \L(\DK(S^1 \to \O^1_{\cl})) \right), \]
which is the subgroupoid of $\Shv_\infty( \En G \times_G M, \L(\DK(S^1 \to \O^1))) \simeq \Shv_\infty( \En G \times_G M, \Bn S^1)$ consisting of objects whose curvature is $0 \in \O^2(\En G \times_G M)$.  In other words,  $\Aut(\cEllh) \simeq \Bunc(\En G \times_G M)_{\fl}$.  The equivalence $G\GBunc(M) \xrightarrow{\iso} \Bunc(\En G \times_G M)$ from \cite[Theorem 4.3]{Redden16b}, when restricted to the subgroupoids of flat connections, implies 
\[ \Aut(\cLh) \simeq G\GBunc(M)_{\fl} \iso \Bunc(\En G \times_G M)_{\fl} \simeq \Aut( \cEllh).\]
\end{proof}


\section{Applications}\label{Sec:Examples}

The results from the previous section have a few immediate consequences that we briefly explain.  

\subsection{Equivariant holonomy}\label{Subsec:EquivHol}
One important general principle is the following: Any natural construction involving gerbes has an equivariant generalization, because equivariant gerbes pull back to ordinary gerbes.  More precisely, $\cLh \in G\GGrbc(M)$ determines $\cEllh \in \Grbc(\En G \times_G M)$, which pulls back along any $(P,\Theta,f) \colon X \to \En G \times_G M$ to an ordinary gerbe with connection $\cEllh(P,\Theta,f) \in \Grbc(X)$, as indicated by
\[ X \xrightarrow{(P,\Theta,f)} \En G \times_G M \xrightarrow{\cEllh} \BBn S^1.\]

A specific instance of this principle is surface holonomy for gerbes with connection.  Holonomy  for ordinary gerbe connections is defined in a number of ways, and it classifies gerbe connections up to isomorphism; see \cite{MR2077672, MR1945806, MR2648325, MR1932333} for various treatments.  If $\Sigma^2$ is a closed oriented surface, then any gerbe connection on $\Sigma^2$ is isomorphic to $\cIh_B$ for some $B\in \O^2(\Sigma^2)$.  The holonomy is defined by
\[ \Hol_{\cIh_B} \= \int_{\Sigma^2} B \mod \Z \quad \in \, \R/\Z, \]
where $S^1$ continues to be identified with $\R/\Z$ to avoid factors of $2\pi i$.  For general $\cLh \in \Grbc(M)$, the holonomy $\Hol_{\cLh}(f)$ along $f\colon \Sigma^2 \to M$ is simply $\Hol_{\cIh_B}$, where $f^*\cLh \iso \cIh_B$.  

This existing holonomy construction immediately defines a version for $G$-equivariant gerbe connections.  Suppose that $M \in G\GMan$ and $\cLh \in G\GGrbc(M)$.  Given $(P,\Theta) \in \Bunc[G](\Sigma^2)$ and a $G$-equivariant map $f\colon P \to M$, then $\Theta^*( f^* \cLh ) \in \Grbc(\Sigma^2)$.  We simply define the {\it equivariant holonomy} along $(P,\Theta, f) \colon \Sigma^2 \to \En G \times_G M$ as 
\[ \Hol_{\cLh}(P,\Theta, f) \= \Hol_{\Theta^* ( f^* \! \cLh) }  \in \R/\Z. \]
The equivariant holonomy is invariant under gauge transformations $\varphi \colon P \to P$, in that 
\[ \Hol_{\cLh}(P,\varphi^*\Theta, f\o \varphi) = \Hol_{\cLh}(P,\Theta,f).\]
Furthermore, if $W^3$ is an oriented 3-manifold, then the equivariant holonomy along the boundary of $(Q,\Theta, F) \colon W^3 \to \En G \times_G M$ is given by the integrating the image of the equivariant curvature under the Weil homomorphism,
\[ \Hol_{\cLh}  (\partial(Q,\Theta,F)) =  \int_{W^3} \curv(\Theta^*(F^* \cLh)) = \int_{W^3} \Theta^* \left(F^* \curv_G(\cLh)\right) \mod \Z.\]
Therefore, one can consider the equivariant holonomy of objects in $G\GGrbc(M)$ as a way to define  WZW terms in gauged sigma models \cite{MR1151251, MR1311654, MR2770022,MR3079005,  1410.5500}.

\begin{example}\label{Ex:TrivGrbPt}Consider $G=U(n)$, $M=\pt$, and the topologically trivial gerbe $\cI \in U(n) \GGrbc(\pt) \iso \Grbc(\Bn U(n))$.  Since $H^3(BU(n);\Z) = \O^3_{U(n)}(\pt) = 0$, the short exact sequence \eqref{ses3} induces an isomorphism
\[ H^2(BU(n);\R) \otimes \R/\Z \overset{\iso}\longrightarrow \HG[U(n)]^3(\pt), \]
indicating that there can still exist non-trivial equivariant connections.  To see these explicitly, consider a multiple of the universal first Chern form.  For any $t\in \R$, define
\[ \mathpzc{t} \= t \tfrac{i}{2\pi} \Tr (\bullet) \in (S^1 \mathfrak{u}(n)^*)^{U(n)} = \O^2_{U(n)}(\pt)  \iso H^2(BU(n);\R).\]
Then, $[\cIh_{\mathpzc{t}}] = 0 \in \HG[U(n)]^3(\pt)$ if and only if $t \in \Z$.  In fact, the equivariant holonomy along any 2-cycle $(P,\Theta) \colon \Sigma^2 \to \Bn U(n)$ is 
\[ \Hol_{\cIh_{\mathpzc{t}}} (P,\Theta) = \int_{\Sigma^2} \Theta^*\left(  t \tfrac{i}{2\pi} \Tr (\bullet) \right) = t \int_{\Sigma^2} c_1(\Theta) \mod \Z.\]

This example illustrates why the gerbe connections in this paper should be thought of as $G$-equivariant, as opposed to merely $G$-invariant.  Furthermore, it explicitly demonstrates that the category of  equivariant gerbe connections considered by Gomi in \cite{Gomi-EquivDel, Gomi-EquivGrbs} is not equivalent to  $G\GGrbc(M)$; in particular, compare \cite[Proposition 4.6 and Theorem 5.16]{Gomi-EquivDel} with Theorem \ref{Thm:IsoClasses} and the short exact sequence \eqref{ses2}.
\end{example}

\subsection{Equivariant gerbes over compact semisimple Lie groups}
Let $G$ be a finite-dimensional Lie group, viewed as a $G$-manifold under the adjoint action of $G$ on itself.  To emphasize the action is conjugation, as opposed to left or right multiplication, we write $\En G \times_{\Ad} G$ for the differential quotient stack.  Note that $(P,\Theta, g_\varphi) \colon X \to \En G \times_{\Ad} G$ is equivalent to $(P,\Theta) \in \Bunc[G](X)$, together with a gauge transformation $\varphi \colon P \to P$.  Hence,  $\cEllh \in \Grbc(\En G \times_{\Ad} G)$ associates to any such $(P,\Theta, g_\varphi)$ an ordinary gerbe with connection on $X$.  

Gerbes on $G$ are the largest source of equivariant gerbes in the existing literature.  Detailed constructions, in various forms, have been studied in \cite{BXZ-EquivGrbs, Brylinski-Grb00, MR1945806, Mein-Grb, MR3539489}.   While it is often desirable to have explicit geometric models, it is not always strictly necessary, and we now give a complimentary approach that requires very little work.

Let $\theta, \bar\theta \in \O^1(G;\fg)$ denote the left-invariant and right-invariant, respectively, Maurer--Cartan forms on $G$.  If $G$ is compact semi-simple, then $H^1(G;\R)=H^2(G;\R)=0$ \cite[Theorem 21.1]{CE48}.  Hence  $H^3(G;\R) \iso H^4(BG;\R) \iso (S^2 \fg^*)^G$, allowing us to identify any element $\eta \in H^3(G;\R)$ with $\langle,\rangle_\eta$, an $\Ad$-invariant symmetric bilinear form on $\fg$.  We normalize so that $\eta$ is canonically represented by the closed bi-invariant 3-form $\langle \theta, [\theta,\theta] \rangle_\eta \in \O^3(G)$.  This 3-form has an equivariantly closed extension \cite{MR1321064, MR1638045}; in the Cartan model this takes the form
\begin{equation}\label{Eq:EquivExtension} \omega_\eta \=  \langle \theta,[\theta,\theta] \rangle_\eta - \tfrac12 \langle \theta + \bar\theta, \bullet \rangle_\eta  \in  \O^3_G(G),\end{equation}
where $\langle \theta + \bar\theta, \bullet \rangle \in \left(S^1 \fg^* \otimes \O^1(G)\right)^G$.  If $\eta_G \in H^3_G(G;\Z)$, we will denote its image under $H^3_G(G;\Z) \to H^3_G(G;\R) \xrightarrow{\iso} H^3(G;\R)$ by $\eta$.  The last map is an isomorphism by the following lemma.

\begin{lemma}\label{Lem:H2G}If $G$ is a compact semisimple Lie group, acting on itself by conjugation, then \[ H^2_G(G;\R) = 0 \, \text{ and  } \, H^3_G(G;\R) \iso H^3(G;\R).\]
\end{lemma}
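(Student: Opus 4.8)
The plan is to extract both statements from the Serre spectral sequence of the Borel fibration of the conjugation action, the only real subtlety being a single transgression-type differential. Write the Borel construction as the fibration
\[ G \xrightarrow{\;j\;} EG \times_G G \xrightarrow{\;\pi\;} BG, \]
and consider its cohomology Serre spectral sequence with real coefficients, $E_2^{p,q} = H^p\!\big(BG; H^q(G;\R)\big) \Rightarrow H^{p+q}_G(G;\R)$. Since $G$ is connected, $BG$ is simply connected, the coefficient system is trivial, and $E_2^{p,q} \iso H^p(BG;\R) \otimes H^q(G;\R)$. First I would collect the inputs. On the fiber: $H^0(G;\R) = \R$, and $H^1(G;\R) = H^2(G;\R) = 0$ by \cite[Theorem 21.1]{CE48} (using that $\fg$ is semisimple), while $H^3(G;\R) \iso (S^2\fg^*)^G$. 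On the base: $H^*(BG;\R) \iso (S\fg^*)^G$ is by Chern--Weil a polynomial algebra whose generators sit in degree $\geq 4$ when $G$ is semisimple, so $H^1(BG;\R) = H^2(BG;\R) = H^3(BG;\R) = 0$.

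The $H^2$ statement is then immediate: every contribution to total degree two, namely $E_2^{2,0} = H^2(BG;\R)$, $E_2^{1,1} = H^1(BG;\R)\otimes H^1(G;\R)$, and $E_2^{0,2} = H^2(G;\R)$, vanishes, so $H^2_G(G;\R) = 0$. For total degree three, the terms $E_2^{3,0}$, $E_2^{2,1}$, and $E_2^{1,2}$ vanish for the same reasons, leaving only $E_2^{0,3} = H^3(G;\R)$. The differentials emanating from $E_r^{0,3}$ are $d_2$, landing in a subquotient of $E_2^{2,2} = 0$; $d_3$, landing in a subquotient of $E_2^{3,1} = 0$; and $d_4 \colon E_4^{0,3} \to E_4^{4,0}$, landing in a subquotient of $H^4(BG;\R)$ --- which is in general nonzero. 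So the vanishing of this $d_4$ is the one non-formal point, and I expect it to be the main obstacle.

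To kill $d_4$ I would use that conjugation fixes the identity $e \in G$, so that $BG = EG\times_G\{e\} \hookrightarrow EG\times_G G$ is a section of $\pi$; hence $\pi^* \colon H^*(BG;\R) \to H^*_G(G;\R)$ is injective. As $\pi^*$ in degree $p$ is the edge composite $H^p(BG;\R) = E_2^{p,0} \twoheadrightarrow \cdots \twoheadrightarrow E_\infty^{p,0} \hookrightarrow H^p_G(G;\R)$, injectivity forces every differential with target in the bottom row $q = 0$ to vanish, and in particular $d_4 = 0$. (Alternatively, the explicit equivariantly closed extension $\omega_\eta$ of \eqref{Eq:EquivExtension} shows the fiber restriction $j^* \colon H^3_G(G;\R) \to H^3(G;\R)$ attains the class of every bi-invariant $3$-form $\langle\theta,[\theta,\theta]\rangle_\eta$, hence is surjective, which likewise forces $d_4 = 0$; but the section argument is cleaner and dispatches all the relevant differentials at once.) With $d_4 = 0$ we obtain $E_\infty^{0,3} = E_2^{0,3} = H^3(G;\R)$ while $E_\infty^{p,q} = 0$ for the remaining $(p,q)$ with $p+q = 3$, so the edge homomorphism $j^* \colon H^3_G(G;\R) \xrightarrow{\iso} H^3(G;\R)$ --- which is precisely the natural map $H^*_G \to H^*$ appearing in the statement --- is an isomorphism.
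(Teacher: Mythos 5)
Your proof is correct, and it follows the same skeleton as the paper's: the Serre spectral sequence of the Borel fibration $G \to EG\times_{\Ad}G \to BG$, with the identical vanishing inputs $H^j(G;\R)=0$ for $0<j<3$ and $H^i(BG;\R)=0$ for $0<i<4$, which disposes of degree $2$ and isolates $E_2^{0,3}$ in degree $3$. The only place the two arguments diverge is the one non-formal step you correctly identified, namely why $E_2^{0,3}$ survives to $E_\infty$. The paper handles this by citing the explicit equivariantly closed extension $\omega_\eta$ of \eqref{Eq:EquivExtension}: since every class in $H^3(G;\R)$ extends to $H^3_G(G;\R)$, the fiber restriction is surjective and $E_2^{0,3}=E_\infty^{0,3}$ --- which is word for word your parenthetical alternative. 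Your primary argument instead uses that conjugation fixes $e\in G$, so the Borel fibration has a section $BG\to EG\times_{\Ad}G$, forcing $\pi^*$ to be injective and hence every differential landing in the bottom row (in particular $d_4\colon E_4^{0,3}\to E_4^{4,0}$) to vanish. That version is more self-contained and formally cleaner, as it needs no Cartan-model computation; the paper's version has the mild virtue of reusing a formula it needs anyway for Theorem \ref{Thm:AdjointGerbe}. Either way the conclusion and the identification of the isomorphism with the edge map $j^*$ are correct.
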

\begin{proof}Consider the standard Serre spectral sequence
\[ E_2^{i,j} = H^i(BG; H^j(G;\R)) \Rightarrow H^{i+j}_G(G;\R), \]
induced by the fibration $G \into EG\times_{\Ad} G \to BG$.  Since $G$ is compact semisimple, then $H^j(G;\R)=0$ for $0<j<3$, and $H^i(BG;\R) = 0$ for $0<i<4$.  Therefore, all terms of total degree 2 in the $E_2$-page vanish, and $H^2_G(G;\R)=0$.  The only non-zero term of degree 3 is $E^{0,3}=H^0(BG; H^3(G;\R)) \iso H^3(G;\R)$.  The construction \eqref{Eq:EquivExtension} of $\omega_\eta$ shows that every $\eta \in H^3(G;\R)$ admits an equivariant extension to $H^3_G(G;\R)$.  Hence $E^{0,3}_2 = E^{0,3}_\infty$, giving the desired isomorphism $H^3(G;\R) \iso H^3_G(G;\R)$.
\end{proof}

\begin{thm}\label{Thm:AdjointGerbe}Let $G$ be a compact semisimple Lie group.  For any $\eta_G \in H^3_G(G;\Z)$, there exists an equivariant gerbe with equivariant connection $\cLh_{\eta} \in G\GGrbc(G)$ satisfying
\[ \DD_G(\cLh_{\eta} ) = \eta_G \in H^3_G(G;\Z), \quad \curv_G (\cLh_{\eta}) = \omega_\eta \in \O^3_G(G),\] 
and such an $\cLh_\eta$ is unique up to isomorphism.
\end{thm}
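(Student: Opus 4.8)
The plan is to reduce both existence and uniqueness entirely to Theorem~\ref{Thm:IsoClasses} and the structural results for $\HG^3(G)$, namely the short exact sequences \eqref{ses1}--\eqref{ses3}, the pullback square \eqref{Eq:PullbackA}, and Lemma~\ref{Lem:H2G}. Under the isomorphism $\pi_0(G\GGrbc(G)) \xrightarrow{\iso} \HG^3(G)$ of Theorem~\ref{Thm:IsoClasses}, the character diagram identifies $\DD_G$ with the projection $\HG^3(G) \twoheadrightarrow H^3_G(G;\Z)$ and $\curv_G$ with the projection $\HG^3(G) \twoheadrightarrow \O^3_G(G)_\Z$. So the theorem is equivalent to the assertion that there is a \emph{unique} class in $\HG^3(G)$ whose DD component is $\eta_G$ and whose curvature is $\omega_\eta$, and I would state it in that form before constructing $\cLh_\eta$.

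For existence, I would first check that the pair $(\eta_G,\omega_\eta)$ determines an element of $A^3_G(G)$, i.e. of the pullback of $H^3_G(G;\Z) \to H^3_G(G;\R)_\Z \leftarrow \O^3_G(G)_\Z$ in \eqref{Eq:PullbackA}. The form $\omega_\eta$ is $d_G$-closed by its construction in \eqref{Eq:EquivExtension}, and its equivariant de Rham class in $H^3_G(G;\R)$ is $\eta$, the image of $\eta_G$: under the restriction isomorphism $H^3_G(G;\R)\xrightarrow{\iso} H^3(G;\R)$ of Lemma~\ref{Lem:H2G} — restriction to the fiber $G\into EG\times_{\Ad}G$, which in the Cartan model kills the $-\tfrac12\langle\theta+\bar\theta,\bullet\rangle_\eta$ term — $\omega_\eta$ goes to the bi-invariant form $\langle\theta,[\theta,\theta]\rangle_\eta$ representing $\eta$. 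Hence $[\omega_\eta]\in H^3_G(G;\R)$ lies in the image of the integral class $\eta_G$, so $\omega_\eta\in\O^3_G(G)_\Z$ and the two images in $H^3_G(G;\R)_\Z$ coincide, giving a point of $A^3_G(G)$. Since $\HG^3(G)\twoheadrightarrow A^3_G(G)$ is surjective by \eqref{ses3}, I can lift it to a class in $\HG^3(G)$, and Theorem~\ref{Thm:IsoClasses} then produces $\cLh_\eta\in G\GGrbc(G)$ with $\DD_G(\cLh_\eta)=\eta_G$ and $\curv_G(\cLh_\eta)=\omega_\eta$.

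For uniqueness, suppose $\cLh$ and $\cLh'$ both satisfy the conclusion, with corresponding classes $x,x'\in\HG^3(G)$. Then $x-x'$ has zero curvature, so by \eqref{ses1} it lies in the flat subgroup $H^2_G(G;\R/\Z)\subset\HG^3(G)$; and $x-x'$ has trivial DD component, i.e. maps to $0$ in $H^3_G(G;\Z)$. The restriction of the DD map to this flat subgroup is the Bockstein $\beta\colon H^2_G(G;\R/\Z)\to H^3_G(G;\Z)$, which is injective because $H^2_G(G;\R)=0$ by Lemma~\ref{Lem:H2G} (the relevant segment of the coefficient long exact sequence being $H^2_G(G;\R)\to H^2_G(G;\R/\Z)\xrightarrow{\beta}H^3_G(G;\Z)$). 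Therefore $x-x'=0$, and Theorem~\ref{Thm:IsoClasses} gives $\cLh\iso\cLh'$ in $G\GGrbc(G)$.

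The substantive content is all imported from Theorem~\ref{Thm:IsoClasses} and Lemma~\ref{Lem:H2G}, so the only real obstacle is the bookkeeping in existence: verifying carefully that $\omega_\eta$ is closed with integral periods and that its equivariant de Rham class is exactly the image of $\eta_G$. That reduces to confirming the equivariant extension \eqref{Eq:EquivExtension} restricts to $\langle\theta,[\theta,\theta]\rangle_\eta$ on the fiber and hence corresponds to $\eta$ under Lemma~\ref{Lem:H2G}; once this identification is in hand, the rest is a formal consequence of the exact sequences for $\HG^*$ and the five-lemma-type reasoning already set up in the preceding section.
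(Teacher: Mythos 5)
Your proposal is correct and follows essentially the same route as the paper: reduce to $\HG^3(G)$ via Theorem \ref{Thm:IsoClasses}, verify that $(\eta_G,\omega_\eta)$ defines a point of $A^3_G(G)$, and invoke the short exact sequence \eqref{ses3} together with $H^2_G(G;\R)=0$ from Lemma \ref{Lem:H2G}. The only cosmetic difference is that the paper gets existence and uniqueness simultaneously from the resulting isomorphism $\HG^3(G)\iso A^3_G(G)$, whereas you handle uniqueness separately via \eqref{ses1} and injectivity of the Bockstein --- which rests on the same vanishing of $H^2_G(G;\R)$.
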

\begin{proof}
Combining Theorem \ref{Thm:IsoClasses}, the short exact sequence \eqref{ses3}
\[ 0 \to \frac{H^2_G(G;\R)}{H^2_G(G;\R)_\Z} \to \HG^3(M) \xrightarrow{(\DD_G, \curv_G)} A^3_G(G) \to 0,\] 
and the fact from Lemma \ref{Lem:H2G} that $H^2_G(G;\R)=0$, we obtain isomorphisms
\[ \pi_0 \left( G\GGrbc(G) \right) \iso \HG^3(G) \iso A^3_G(G).\]
Let $\eta_G \in H^3_G(G;\Z)$.  By the definition \eqref{Eq:EquivExtension} of $\omega_\eta \in \O^3_G(G)$ and Lemma \ref{Lem:H2G}, $\eta_G$ and $\omega_\eta$ map to the same element in $H^3_G(G;\R)$.  In other words, $(\eta_G, \omega_\eta) \in A^3_G(G)$.  Therefore, there exists a unique (up to isomorphism)  $\cLh_\eta \in G\GGrbc(G)$ with the desired characteristic class and curvature.
\end{proof}

\subsection{Higher gerbes} 
Let $p\Grbc(M)$ denote the $(p+1)$-groupoid of $p$-gerbes with connection.  The cases of $p=0$ and $p=1$ correspond to $S^1$-bundles and ordinary gerbes, respectively.  Let 
\[ \B^{p+1}_{\scriptscriptstyle{\nabla}} S^1 \simeq \L \left( \DK (S^1 \xrightarrow{d} \O^1 \xrightarrow{d} \O^2 \xrightarrow{d} \cdots \xrightarrow{d} \O^{p+1})\right)  \in \Shv_\infty \]
denote the sheaf of $p$-groupoids that classifies $p$-gerbes with connection.  This defines $\B^{p+1}_{\scriptscriptstyle{\nabla}} S^1$ up to equivalence, and specific models of $p$-gerbes yield specific  constructions of $\B^{p+1}_{\scriptscriptstyle{\nabla}} S^1$.

If $M\in G\GMan$, it is now reasonable to  define $G$-equivariant $p$-gerbe connections on $M$ as
\[p\Grbc(\En G \times_G M) \= \Shv_\infty(\En G \times_G M,\B^{p+1}_{\scriptscriptstyle{\nabla}} S^1 ).\]
As seen in \eqref{Eq:EquivConns} and Theorem \ref{Thm:Equiv2Gpd}, this recovers something equivalent to the more familiar geometric notions of $G\GBunc(M)$ and $G\GGrbc(M)$ when $p=0$ and $p=1$, respectively.  Furthermore, objects in $p\Grbc(\En G \times_G M)$ have the same desirable formal properties as $\Grbc(\En G \times_G M)$.  They assign to any $(P,\Theta,f)\colon X\to \En G \times_G M$ a $p$-gerbe with connection on $X$, and to any map between principal $G$-bundles is assigned a morphism of $p$-gerbes as indicated in \eqref{Eq:Functoriality}.  Furthermore, isomorphism classes are naturally classified by the differential equivariant cohomology groups
\[ \pi_0 \left( p\text{-}\Grbc(\En G \times_G M) \right) \iso \HG^{p+2}(M),\]
which fit into the short exact sequences \eqref{ses1}--\eqref{ses3} and a character diagram analogous to Theorem \ref{Thm:IsoClasses}.

\bibliographystyle{alphanum}
\bibliography{MyBibDesk}

\end{document}